\newtheorem{thm}{Theorem}[section]
\newtheorem{lemma}[thm]{Lemma}
\newtheorem{theorem}[thm]{Theorem}
\newtheorem{observation}[thm]{Observation}
\newtheorem{prop}[thm]{Proposition}
\newtheorem{proposition}[thm]{Proposition}
\newtheorem{corollary}[thm]{Corollary}
\newtheorem{conjecture}[thm]{Conjecture}
\newtheorem*{prop*}{Proposition}
\newtheorem*{lemma*}{Lemma}
\theoremstyle{definition}
\newtheorem{definition}[thm]{Definition}
\newtheorem{notation}[thm]{Notation}
\newtheorem{example}[thm]{Example}
\theoremstyle{remark}
\newtheorem{remark}[thm]{Remark}
\numberwithin{equation}{section}
\renewcommand{\setminus}{\smallsetminus}
\newcommand{\CO}{\mathcal{O}}
\newcommand{\Hom}{\operatorname{Hom}}
\newcommand{\Hilb}{\operatorname{Hilb}}
\newcommand{\Tr}{\operatorname{Tr}}
\newcommand{\End}{\operatorname{End}}
\newcommand{\SBim}{\mathbb{S}\mathrm{Bim}}
\newcommand{\SMod}{\mathbb{S}\mathrm{Mod}}
\renewcommand{\d}{\delta}
\renewcommand{\a}{\alpha}
\newcommand{\one}{\mathbbm{1}}
\newcommand{\AS}{\EuScript A}
\newcommand{\BS}{\EuScript B}
\newcommand{\CS}{\EuScript C}
\newcommand{\MS}{\EuScript M}
\newcommand{\NS}{\EuScript N}
\newcommand{\PB}{\mathbf{P}}
\newcommand{\QB}{\mathbf{Q}}
\newcommand{\HM}{\mathbb{H}}
\newcommand{\OC}{\mathcal{O}}
\newcommand{\gl}{\mathfrak{gl}}
\renewcommand{\b}{\beta}
\newcommand{\Br}{\operatorname{Br}}
\newcommand{\inv}{^{-1}}
\newcommand{\Id}{\operatorname{Id}}
\newcommand{\HH}{\operatorname{HH}}
\newcommand{\HHH}{\operatorname{HHH}}
\newcommand{\FT}{\operatorname{FT}}
\newcommand{\HT}{\operatorname{HT}}
\newcommand{\KC}{\mathcal{K}}
\newcommand{\Z}{\mathbb{Z}}
\newcommand{\CL}{\mathcal{L}}
\newcommand{\e}{\varepsilon}
\newcommand{\Cone}{\operatorname{Cone}}
\newcommand{\XB}{\mathbf{X}}
\newcommand{\EB}{\mathbf{E}}
\newcommand{\Homc}{\underline{\Hom}}
\newcommand{\pTr}{\pi}
\newcommand{\Q}{\mathbb{Q}}
\newcommand{\C}{\mathbb{C}}
\newcommand{\TS}{\EuScript T}
\newcommand{\US}{\EuScript U}
\renewcommand{\k}{\mathbbm{k}}
\newcommand{\ip}[1]{\left\langle #1\right\rangle}
\newcommand{\frakh}{\mathfrak{h}}
\newcommand{\rouq}{F}
\title{Serre duality for Khovanov-Rozansky homology}
\author{Eugene Gorsky}
\address{Department of Mathematics, University of California, Davis}
\address{International Laboratory of Representation Theory and Mathematical Physics, NRU-HSE, Moscow, Russia}
\email{egorskiy@math.ucdavis.edu}
\author{Matthew Hogancamp}
\address{Department of Mathematics, University of Southern California}
\email{hogancam@usc.edu}
\author{Anton Mellit}
\address{Faculty of Mathematics, University of Vienna}
\email{anton.mellit@univie.ac.at}
\author{Keita Nakagane}
\address{Department of Mathematics,
Tokyo Institute of Technology}
\email{nakagane.k.aa@m.titech.ac.jp}
\begin{document}

\begin{abstract}
We prove that the full twist is a Serre functor in the homotopy category of  type A Soergel bimodules.
As a consequence, we relate the top and bottom Hochschild degrees in Khovanov-Rozansky homology,
categorifying a theorem of K\'alm\'an.
\end{abstract}

\maketitle

\setcounter{tocdepth}{1}
\tableofcontents

\section{Introduction}

The category of Soergel bimodules is a categorification of the Hecke algebra. It can be defined for any Coxeter group, but here we focus on type A only, where the corresponding group is $S_n$ and the category of Soergel bimodules will be denoted by $\SBim_n$. Given a braid on $n$ strands, Rouquier \cite{Rouquier} constructed a complex of bimodules in $\SBim_n$ and proved that it is unique up to a canonical homotopy equivalence. Khovanov and Rozansky \cite{Kh07,KR2} used Rouquier complexes to define {\em Khovanov-Rozansky homology} $\HHH$, a categorification of the HOMFLY-PT polynomial.

In recent years, the Rouquier complex for the full twist braid $\FT_n$ has attracted a lot of attention. Elias and the second author \cite{EH2} proved that $\FT_n$ is in the Drinfeld center of the homotopy category of Soergel bimodules $\KC^b(\SBim_n)$.  They also computed the Khovanov-Rozansky homology of the full twist \cite{EH1} and the categorified eigenvalues of $\FT_n$ acting on  $\KC^b(\SBim_n)$. The work of the first and second author, Negu\cb{t} and Rasmussen \cite{GH,GNR} related $\FT_n$ to a natural line bundle $\CO(1)$ on the isospectral Hilbert scheme $X_n$.

In this paper, we prove that $\FT_n$ (or rather its inverse $\FT_n^{-1}$) acts as a kind of 
Serre functor \cite{BK} in  $\KC^b(\SBim_n)$.  Let $\k$ be a field of characteristic $\neq 2$, and set $R:=\k[x_1,\ldots,x_n]$.  We will consider Soergel bimodules over $\k$.  Given a complex of free $R$-modules $X$, we denote by $X^{\vee}=\Homc(X,R)$ the dual complex. Note that the cohomology of $X^{\vee}$ and of $X$ are, in general, related by the universal coefficient spectral sequence which can be rather complicated.

\begin{theorem}
\label{thm:introserre}
For any two complexes $A,B\in \KC^b(\SBim_n)$ one has
$$
\Homc(A,B)\simeq \Homc(\FT_n\otimes B,A)^{\vee}=\Homc(B,\FT_n^{-1}\otimes A)^{\vee}.
$$
\end{theorem}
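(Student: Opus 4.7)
The statement identifies $\FT_n^{-1}$ as the Serre functor on $\KC^b(\SBim_n)$. Invertibility and centrality of $\FT_n$ (Elias-Hogancamp \cite{EH2}) make the two displayed forms equivalent via the adjunction $\FT_n\otimes -\dashv \FT_n^{-1}\otimes -$, so it suffices to produce
\[
\Homc(A,B)\simeq \Homc(B,\FT_n^{-1}\otimes A)^{\vee}.
\]
My plan is to realize this as a perfect evaluation pairing
\[
\mu_{A,B}\colon \Homc(A,B)\otimes_R \Homc(B,\FT_n^{-1}\otimes A)\longrightarrow R
\]
obtained as composition followed by a natural trace $\tau_A\colon \Homc(A,\FT_n^{-1}\otimes A)\to R$ that encodes a ``Calabi-Yau datum up to $\FT_n$''. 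Once $\mu$ is shown to be nondegenerate the theorem follows.

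\textbf{Construction of $\tau$.} Using dualizability of Soergel bimodules (each Bott-Samelson $B_i$ is self-biadjoint, so every $A\in \SBim_n$ has a two-sided dual $A^*$), one reduces to constructing a single map $\tau_{\one}\colon \Homc(\one,\FT_n^{-1})\to R$; the general $\tau_A$ is then obtained by precomposition with the coevaluation $\one\to A\otimes A^*$ together with centrality of $\FT_n^{-1}$ to cyclically reposition it. For $\tau_{\one}$ itself, I would use the explicit structure of $\FT_n^{-1}$: either the Koszul-type recursion from \cite{EH1}, which makes a ``top-degree class'' on $\FT_n^{-1}$ manifest, or the identification of $\FT_n$ with the tautological line bundle $\CO(1)$ on the isospectral Hilbert scheme from \cite{GH,GNR}, which realizes $\tau_\one$ as a Serre trace pulled back from $X_n$.

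\textbf{Verification of nondegeneracy.} Dualizability again reduces the check to $A=\one$: it suffices to show that the pairing
\[
\Homc(\one,M)\otimes_R \Homc(M,\FT_n^{-1})\longrightarrow R
\]
is perfect for every $M\in \KC^b(\SBim_n)$. I would argue this by induction on the length of $M$ as an iterated cone of Bott-Samelson bimodules: both sides are triangulated in $M$, and the pairing should send the two distinguished triangles to compatible ones. The base case $M=R$ reduces to self-duality of $\HHH(\FT_n)$, available from \cite{EH1}; the inductive step uses that Bott-Samelsons are Frobenius as $R$-bimodules, so tensoring against them preserves perfectness of the pairing.

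\textbf{Main obstacle.} The heart of the argument is the canonical construction of $\tau$ so that it is simultaneously compatible with the monoidal structure, with biadjointness of the $B_i$, and with the extension from $A=\one$ to arbitrary $A$. This coherence is essentially equivalent to the theorem itself, and in practice it demands one of two concrete inputs: a Hilbert-scheme geometric model in which $\tau$ is a categorified Serre trace on $X_n$, or a hands-on combinatorial construction exploiting the Koszul structure of $\FT_n^{-1}$. A secondary technical difficulty is carrying out the inductive step in the homotopy category $\KC^b$ rather than in a derived category, where the behaviour of $(-)^{\vee}=\Homc(-,R)$ on cones is slightly subtle and must be tracked at the chain level.
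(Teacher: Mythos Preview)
Your proposal is a plausible strategy, but as you yourself concede in the ``Main obstacle'' paragraph, it is not a proof: the construction of the trace $\tau_{\one}\colon \Homc(\one,\FT_n^{-1})\to R$ together with all of its coherence with the monoidal and duality structure is left open, and you correctly note that producing it is ``essentially equivalent to the theorem itself.'' Neither \cite{EH1} nor \cite{GNR} hands you such a trace off the shelf. There is also a smaller but concrete error: your base case $M=R$ does not reduce to ``self-duality of $\HHH(\FT_n)$''. What you actually need there is that $\Homc(R,\FT_n^{-1})=\HH^0(\FT_n^{-1})$ is free of rank one over $R$ and that your $\tau_{\one}$ identifies it with $R$; this is a statement about a single Hochschild degree, not about the full triply graded invariant, and it is itself a special case of what you are trying to prove.

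The paper avoids the trace/coherence problem entirely by a different route. Using rigidity it reduces Theorem~\ref{thm:introserre} to the single statement $\HH^0(X)\simeq \HH_0(\FT_n\otimes X)$ (Theorem~\ref{thm:introReform}), since $\Homc(A,B)\cong \HH_0(A\otimes B^\vee)^\vee$ and $\Homc(\FT\otimes B,A)\cong \HH^0(A\otimes B^\vee\otimes \FT^{-1})$. This ``top versus bottom'' statement is then proven by induction on $n$ through a \emph{relative} version: for the Jucys--Murphy complex $\CL_n=\FT_n\otimes\FT_{n-1}^{-1}$ one shows $\pi^-(X)\simeq \pi^+(\CL_n\otimes X)$, where $\pi^\pm$ are the right/left adjoints to the inclusion $\SBim_{n-1,1}\hookrightarrow\SBim_n$ (kernel/cokernel of $x_n\otimes 1-1\otimes x_n$). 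The key mechanism is a pair of semi-orthogonal decompositions: $\pi^\pm$ kill the triangulated subcategories $\US^\pm$ spanned by Rouquier complexes containing a factor $F_{n-1}^{\pm 1}$; tensoring by $\CL_n$ carries $\US^-$ to $\US^+$; and the splitting map $\Psi\colon\CL_n\to\one$ has cone in $\US^+$, so $\pi^+(\Psi)$ is an equivalence. Iterating over the factorization $\FT_n=\CL_2\cdots\CL_n$ and using $\HH^0=(\pi^-)^n$, $\HH_0=(\pi^+)^n$ finishes the argument.

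The upshot is that the paper never constructs a Serre trace or checks any coherence: the role of your $\tau$ is played by the elementary Markov-move vanishing $\pi^+(F_{n-1})\simeq 0$, $\pi^-(F_{n-1}^{-1})\simeq 0$, together with the explicit cone of $\Psi$. This is what makes the argument go through cleanly at the chain level in $\KC^b$, exactly the technical point you flagged as delicate.
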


Here $\Homc(-,-)$ denotes the complex of homs; in the category of complexes of Soergel bimodules, $\Homc(A,B)$ is a $\Z\times \Z$-graded complex of $(R,R)$-bimodules.  Theorem \ref{thm:introserre} is true whether we regard $\Homc(-,-)$ as complexes of right $R$-modules or left $R$-modules.

\begin{remark}
Typically one states Serre duality in the context of categories which are linear over a field $\k$.  The statement above differs from this typical situation in several ways.  First, our category is monoidal, and the duality is taken with respect to the ring $R \cong \End(\one)$ instead of a field.  Second, the morphism spaces are bimodules over this ring, and one may take the duals with respect to either the left or right actions.   Finally, the Serre duality functor itself is tensoring with an object of the category.
\end{remark}

\begin{remark}
In \cite{Bez, MS} it was proven that action of the full twist on the BGG category $\CO$ is the Serre functor (see \S \ref{sec:smod}), which holds in more general types.  We expect that our result also generalizes to other types, though we do not consider this here.
\end{remark}



\begin{remark}
Theorem \ref{thm:introserre} can be compared with a result of Haiman \cite{Haiman} which states that the isospectral Hilbert scheme $X_n$ is Gorenstein with the canonical sheaf $\CO(-1)$, so tensor multiplication by $\CO(-1)$ is a Serre functor. 
\end{remark}

\subsection{A reformulation}
\label{subsec:introreform}
It is much more convenient to restate Theorem \ref{thm:introserre} in a more canonical form.  First, let $\SBim_{1,\ldots,1}\subset \SBim_n$ denote the full subcategory consisting of direct sums of shifted copies of the trivial bimodule $\one= R$.  The inclusion $\SBim_{1,\ldots,1} \rightarrow \SBim_n$ has left and right adjoints $\Pi_L,\Pi_R:\SBim_n\rightarrow \SBim_{1,\ldots,1}$ defined as follows.  The left adjoint $\Pi_L(M)=\HH_0(M)$ is the quotient of $M$ by the sub-bimodule of commutators $fm-mf$, for all $f\in R$ and all $m\in M$, while the right adjoint $\Pi_R(M)=\HH^0(M)$ is the sub-bimodule consisting of elements $m\in M$ with $fm-mf=0$ for all $f\in R$.  When $M$ is a Soergel bimodule, $\HH_0(M)$ and $\HH^0(M)$ are free $R$-modules, hence can be regarded as objects of $\SBim_{1,\ldots,1}$ (see \S \ref{sec:background}).

The additive functors $\HH_0$ and $\HH^0$ can be extended to complexes, and it is not hard to see that
\[
\HH^0(X) \cong \Homc_{R,R}(R,X),\qquad \HH_0(X) \cong \Homc_{R,R}(X,R)^{\vee},
\]
naturally in $X\in \KC^b(\SBim_n)$.  The second of these uses properties of Soergel bimodules.  Thus, Theorem \ref{thm:introserre} has the following as a special case (set $A=R$ and $B=X$).

\begin{theorem}\label{thm:introReform}
For any complex $X\in \KC^b(\SBim_n)$ we have $\HH^0(X)\simeq \HH_0(\FT\otimes X)$ in $\KC^b(\SBim_{1,\ldots,1})$.
\end{theorem}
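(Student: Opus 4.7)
My strategy is to prove Theorem~\ref{thm:introReform} first and then deduce Theorem~\ref{thm:introserre} from it via an enriched Yoneda argument: starting from the identification $\Homc_{R,R}(A,B) \simeq \HH^0(A^\vee \otimes_R B)$ for Soergel bimodules (where $(-)^\vee$ is the bimodule dual), one applies the reformulation to $X = A^\vee \otimes_R B$ and then uses the centrality of $\FT$ in $\KC^b(\SBim_n)$ (Elias--Hogancamp) to slide $\FT$ across the tensor factor, recovering the full Serre duality.

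To prove the reformulation, I would represent $\HH^0$ and $\HH_0$ uniformly via the Koszul resolution $K_\bullet \to R$ of the diagonal $R$-bimodule, with $K_i = R \otimes \Lambda^i V \otimes R$ and $V = \k\{x_1, \ldots, x_n\}$:
\[
\HH^0(X) \simeq \Hom_{R\otimes R}(K_\bullet, X),\qquad \HH_0(Y) \simeq Y \otimes_{R\otimes R} K_\bullet.
\]
Since $\Lambda^\bullet V$ is Frobenius with one-dimensional top $\Lambda^n V$, the Koszul complex is self-dual up to a shift by $n$ and a twist by $\det V$, yielding a canonical equivalence $\HH^0(X) \simeq \HH_n(X)$ in $\KC^b(\SBim_{1,\ldots,1})$ (with precise grading shifts). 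Theorem~\ref{thm:introReform} then reduces to a ``Hochschild-shift'' equivalence
\[
\HH_n(X) \simeq \HH_0(\FT \otimes X)
\]
up to the inverse shift, i.e., the functor-level analogue of K\'alm\'an's identity between top and bottom $a$-degrees of the HOMFLY polynomial.

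I would attack this shift equivalence by constructing an explicit natural transformation between the two functors and then verifying it is a quasi-isomorphism on a tensor-generating set of $\KC^b(\SBim_n)$. For $X = R$ this amounts to comparing $\HH_n(R) \simeq R$ (up to shift) with $\HH_0(\FT_n)$, a calculation accessible via the Elias--Hogancamp computation of $\HHH(\FT_n)$. For $X$ a Rouquier complex of a braid generator $T_i$, centrality of $\FT$ allows one to push $T_i$ across $\FT$ and reduce to a local computation using the standard description of $\HH_\bullet(B_i)$. A general $X \in \KC^b(\SBim_n)$ is built from these via tensor products and mapping cones, so the equivalence propagates by naturality of the constructed map together with triangulated closure on both sides.

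The main obstacle is producing the natural transformation in a form simultaneously canonical enough to commute with the tensor structure and explicit enough to be evaluated on generators. Haiman's theorem that the isospectral Hilbert scheme $X_n$ is Gorenstein with canonical sheaf $\CO(-1)$ suggests a candidate ``Serre trace'' $\HH_0(\FT) \to R$ that categorifies, under the Gorsky--Negu\cb{t}--Rasmussen correspondence between $\FT$ and $\CO(1)$, the canonical unit $\CO_{X_n} \simeq \CO(1) \otimes \omega_{X_n}$; engineering this trace and verifying its nondegeneracy is the technical heart of the argument. An alternative, purely adjunction-theoretic formulation --- using that $\HH^0$ and $\HH_0$ are respectively the right and left adjoints to the inclusion $\SBim_{1,\ldots,1} \hookrightarrow \SBim_n$, and asking that $\FT^{-1}$ be the Nakayama-type bimodule intertwining them --- may provide a cleaner pathway to the same conclusion.
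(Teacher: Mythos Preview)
Your approach diverges from the paper's and, as outlined, has genuine gaps.

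The paper proceeds by induction on $n$ via a \emph{relative} version of the statement. It factors $\HH^0 = (\pi^-)^n$ and $\HH_0 = (\pi^+)^n$ through the partial trace functors $\pi^\pm : \SBim_n \to \SBim_{n-1,1}$ (the right and left adjoints to the inclusion), and proves instead that $\pi^-(X) \simeq \pi^+(\CL_n \otimes X)$, where $\CL_n = \FT_n \otimes \FT_{n-1}^{-1}$ is the Jucys--Murphy complex. The only input is an explicit ``splitting map'' $\Psi: \CL_n \to \one$, built by stacking the obvious chain maps $F_i \to F_i^{-1}$, together with the observation that $\Cone(\Psi)$ is filtered by positive Rouquier complexes $F_w$ with $w \notin S_{n-1}\times S_1$, each of which is annihilated by $\pi^+$ by the Markov move. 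No geometric input and no computation of $\HHH(\FT_n)$ is used; the argument is pure semi-orthogonal-decomposition bookkeeping.

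Your plan instead tries to construct a global natural transformation directly and verify it on generators. Two steps fail. First, your candidate ``Serre trace'' $\HH_0(\FT) \to R$ is sourced from the Gorsky--Negu\cb{t}--Rasmussen dictionary with the Hilbert scheme, which is conjectural; Haiman's Gorenstein theorem does not by itself produce a morphism of complexes of Soergel bimodules. Second, and more structurally, your propagation step is unjustified: the objects $R$ and the $F_i$ do \emph{not} span $\KC^b(\SBim_n)$ under cones and shifts alone --- one must also take tensor products --- but neither $\HH_n(-)$ nor $\HH_0(\FT \otimes -)$ is monoidal, so a natural transformation that is an isomorphism on $R$ and on each $F_i$ has no reason to remain one on $F_{i_1}\otimes\cdots\otimes F_{i_r}$. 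To run a generator argument you would need to check all $F_w$, $w\in S_n$, which is no easier than the original problem.

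Your closing remark is in fact the right instinct: the paper's proof is precisely the ``adjunction-theoretic'' route you gesture at, made effective by passing to the relative setting $\SBim_{n-1,1} \subset \SBim_n$, where the required intertwiner $\CL_n$ is simple enough that the map $\CL_n \to \one$ can be written down by hand and its cone analyzed directly.
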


Using the rigid monoidal structure on $\KC^b(\SBim_n)$ is not hard to see that in fact Theorem \ref{thm:introserre} is equivalent to Theorem \ref{thm:introReform}.  However, the latter is often preferable because the $R$-action is now canonically defined (the left and right $R$-actions on $\HH^0(-)$ and $\HH_0(-)$ coincide).  More importantly, the latter theorem generalizes to a relative version, which we discuss next.

For each subset $I\subset \{1,\ldots,n-1\}$, let $\SBim_I\subset \SBim_n$ denote the full monoidal subcategory generated by the Bott-Samelson bimodules $B_s$ with $s\in I$.  Alternatively, the subgroup of $S_n$ generated by $s\in I$ is of the form $S_{k_1}\times \cdots\times S_{k_r}\subset S_n$, and we will write
\[
\SBim_I =: \SBim_{k_1,\ldots,k_r}
\]
by abuse.

The inclusion $\SBim_{n-1,1}\rightarrow \SBim_n$ has left and right adjoints $\pi_L,\pi_R:\SBim_n\rightarrow \SBim_{n-1,1}$ defined as follows.  With respect to the identification $R=\k[x_1,\ldots,x_n]$,  $\pi_L(M)$ and $\pi_R(M)$ are the cokernel and kernel of $x_n\otimes 1 -1\otimes x_n$ acting on $M$, respectively.

\begin{remark}
In the main body of the paper, we write $\pi^+=\pi_L$ and $\pi^-=\pi_R$ because of the interaction of these functors with positive and negative Rouquier complexes.
\end{remark}

\begin{theorem}\label{thm:introrelative}
Let $\CL_n:=\FT_n\otimes \FT_{n-1}\inv$ denote the Rouquier complex of the Jucys-Murphy braid $\sigma_{n-1}\cdots\sigma_2\sigma_1^2\sigma_2\cdots \sigma_{n-1}$.  For each complex $X\in \KC^b(\SBim_n)$ we have
\[
\pi_R(X)\simeq \pi_L(\CL_n\otimes X),
\]
naturally in $X$.
\end{theorem}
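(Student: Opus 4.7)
The plan is to show, by a Yoneda-style argument, that both $\pi_R(X)$ and $\pi_L(\CL_n\otimes X)$ corepresent the same functor on $\KC^b(\SBim_{n-1,1})$.  The key inputs will be two instances of Serre duality---one in $\SBim_n$ provided by Theorem~\ref{thm:introserre}, and one in the parabolic subcategory $\SBim_{n-1,1}$---together with the observation that $\FT_{n-1}\otimes \CL_n\simeq \FT_n$, which follows immediately from centrality of $\FT_n$ in $\KC^b(\SBim_n)$ \cite{EH2}.

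Fix $Y\in \KC^b(\SBim_{n-1,1})$.  The adjunction between the inclusion $\SBim_{n-1,1}\hookrightarrow \SBim_n$ and $\pi_R$ identifies $\Homc_{\SBim_{n-1,1}}(Y,\pi_R(X))$ with $\Homc_{\SBim_n}(Y,X)$, and Theorem~\ref{thm:introserre} in turn rewrites this as $\Homc_{\SBim_n}(\FT_n\otimes X,\,Y)^{\vee}$.  For the other side, Serre duality inside $\SBim_{n-1,1}$ (with Serre functor $\FT_{n-1}^{-1}$) should give
$$
\Homc_{\SBim_{n-1,1}}(Y,\pi_L(\CL_n\otimes X))\simeq \Homc_{\SBim_{n-1,1}}\bigl(\FT_{n-1}\otimes \pi_L(\CL_n\otimes X),\,Y\bigr)^{\vee}.
$$
Since $\pi_L$ is the cokernel of the action of $x_n\otimes 1-1\otimes x_n$, it commutes with tensoring by any object of $\SBim_{n-1,1}$, so the inner expression becomes $\pi_L(\FT_{n-1}\otimes \CL_n\otimes X)\simeq \pi_L(\FT_n\otimes X)$.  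The defining adjunction of $\pi_L$ then rewrites the right-hand side as $\Homc_{\SBim_n}(\FT_n\otimes X,\,Y)^{\vee}$, matching the first computation.  Naturality is preserved at each step, so Yoneda delivers the required equivalence of objects in $\KC^b(\SBim_{n-1,1})$.

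The main obstacle is justifying the \emph{relative} Serre duality invoked inside $\SBim_{n-1,1}$.  Although $\SBim_{n-1,1}$ resembles $\SBim_{n-1}$ with an inert last strand, its objects are still bimodules over the full ring $R=\k[x_1,\ldots,x_n]$, so dualizing via $\Homc(-,R)$ picks up a contribution from the variable $x_n$; one must check that this contributes only a controlled degree shift and that the correct Serre functor in $\SBim_{n-1,1}$ is indeed $\FT_{n-1}^{-1}$.  I would expect to address this by induction on $n$, or preferably by proving a uniform parabolic strengthening of Theorem~\ref{thm:introserre} for all $\SBim_{k_1,\ldots,k_r}$, which would specialize to the ingredient needed here.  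Some care with the double-dual is also required: the final match above implicitly uses $\Homc(\FT_n\otimes X,Y)^{\vee\vee}\simeq \Homc(\FT_n\otimes X,Y)$, valid for bounded complexes of Soergel bimodules since they are perfect over $R$.
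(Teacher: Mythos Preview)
Your Yoneda argument is formally clean, but it has the logical dependencies reversed relative to the paper, and this creates a genuine circularity that you do not close.  In the paper, Theorem~\ref{thm:introrelative} (the relative statement) is proved \emph{first} and directly (Theorem~\ref{thm:relativeSerre}), and Theorem~\ref{thm:introserre} (the absolute statement) is then \emph{deduced} from it by composing the relative steps down the chain $\SBim_n\to\SBim_{n-1,1}\to\cdots\to\SBim_{1,\ldots,1}$ (Theorem~\ref{thm:left right} and Theorem~\ref{thm:serre}).  So when you invoke Theorem~\ref{thm:introserre} as an input, you are assuming something whose only proof in this paper rests on the very theorem you are trying to establish.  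You correctly flag the second ingredient---Serre duality inside $\SBim_{n-1,1}$---as a gap, but the first ingredient is an equally serious one, and your suggestion to handle the parabolic case ``by induction on $n$'' does not help: even granting Serre duality in $\SBim_{n-1,1}$ inductively, you would still need absolute Serre duality in $\SBim_n$ to run your argument, and there is no independent route to that here.

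The paper's actual proof avoids Serre duality entirely at this stage.  It uses the two semi-orthogonal decompositions $\KC^b(\SBim_n)\simeq(\US^-\to\KC^b(\SBim_{n-1,1}))\simeq(\KC^b(\SBim_{n-1,1})\to\US^+)$ from Theorem~\ref{thm:relative semiortho}, observes that tensoring with $\CL_n$ carries $\US^-$ to $\US^+$ (Lemma~\ref{lemma:Ln action}), and uses the explicit splitting map $\Psi:\CL_n\to\one$ (Lemma~\ref{lemma:maps from Ln}) to identify $\pi^+(\CL_n\otimes\pi^-(X))\simeq\pi^-(X)$.  Decomposing $X$ along the first semi-orthogonal decomposition and applying $\pi^+(\CL_n\otimes-)$ then gives the result directly.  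This argument is self-contained and furnishes the natural equivalence explicitly, rather than abstractly via Yoneda.
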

By analogy with Theorem \ref{thm:introReform}, we may refer to $\CL_n\inv$ as the \emph{relative Serre functor} for $\SBim_n$ relative to $\SBim_{n-1,1}$.  In relation to the conjectures in \cite{GNR}, this result is a monoidal, algebraic analogue of a geometric statement regarding the Hilbert scheme of points $\Hilb^n(\C^2)$ relative to the nested Hilbert scheme $\Hilb^{n-1,1}(\C^2)$. More precisely, $\Hilb^{n-1,1}(\C^2)$ yields a smooth correspondence between 
$\Hilb^n(\C^2)$ and $\Hilb^{n-1}(\C^2)$, and the analogues of $\pi_R$ and $\pi_L$ differ by the canonical line bundle 
on $\Hilb^{n-1,1}(\C^2)$ which was computed e.g. in \cite[Proposition 3.6.4]{Haiman} and corresponds to $\CL_n^{-1}$. 

We expect that this statement generalizes to arbitrary Coxeter systems in the following way.  Let $(W,S)$ be any finite Coxeter system with longest element $w_0\in W$.  After choosing a \emph{realization} $\mathfrak{h}$ of $W$, there is an associated category $\SBim=\SBim(W,\mathfrak{h})$ of Soergel bimodules (or its diagrammatic version; see \cite{EW} and references therein).  Given a subset $I\subset S$, we let $\SBim_I\subset \SBim$ denote the full monoidal, idempotent complete, subcategory generated by Bott-Samelson bimodules $B_s$ with $s\in I$.  Note that $\SBim_I$ is just the category of Soergel bimodules associated to the \emph{parabolic subgroup} $W^I\subset W$, defined using the given realization $\mathfrak{h}$ of $W$.

Let $\FT:=F_{w_0}^{\otimes 2}$ denote the Rouquier complex for the ``full twist'' in $\KC^b(\SBim)$, and let $\FT_I:=F_{w_I}^{\otimes 2}$, where $w_I$ is the longest element of $W^I\subset W$.  Set $\FT_{S/I}:= \FT\otimes \FT_I\inv$.  Equivalently, $\FT_{S/I}=F_{v_I\inv}\otimes F_{v_I}$, where $v_I\in W$ denote a shortest length representative of the coset $w_0 W_I$.

\begin{conjecture}\label{conjecture:generalCox}
Let $\pi_L,\pi_R:\SBim\rightarrow \SBim_I$ denote the left and right adjoints to the fully faithful inclusion $\SBim_I\rightarrow \SBim$.  Then $\FT_{S/I}$ tensor commutes all with complexes in $\KC^b(\SBim_I)$ up to natural homotopy equivalence, and
\[
\pi_R(X)\simeq \pi_L(\FT_{S/I} \otimes X) \in \KC^b(\SBim_I),
\]
naturally in $X\in \KC^b(\SBim)$.
\end{conjecture}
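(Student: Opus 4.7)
My plan is to derive Conjecture \ref{conjecture:generalCox} by assuming the general-type analogue of Theorem \ref{thm:introserre} and then chasing adjunctions and Serre duality in both $\SBim$ and $\SBim_I$; Yoneda in $\KC^b(\SBim_I)$ then does the rest. The role played in type $A$ by Theorem \ref{thm:introserre} is played here by this hypothetical general-type version.

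\textbf{Relative centrality of $\FT_{S/I}$.} I would first establish that $\FT_{S/I}$ tensor commutes with every complex in $\KC^b(\SBim_I)$. Centrality of $\FT$ in $\KC^b(\SBim)$ (expected in general type) and centrality of $\FT_I$ in $\KC^b(\SBim_I)$, together with $\FT \simeq \FT_{S/I}\otimes \FT_I \simeq \FT_I\otimes \FT_{S/I}$, give for any $B\in\KC^b(\SBim_I)$
$$
\FT_{S/I}\otimes B\otimes \FT_I \simeq \FT\otimes B \simeq B\otimes \FT \simeq B\otimes \FT_{S/I}\otimes \FT_I,
$$
and cancelling the invertible $\FT_I$ yields the required natural equivalence $\FT_{S/I}\otimes B\simeq B\otimes \FT_{S/I}$.

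\textbf{Yoneda chain for the main assertion.} The inclusion-$\pi_R$ adjunction yields $\Homc_{\SBim_I}(Y,\pi_R(X)) \simeq \Homc_{\SBim}(Y,X)$ naturally for $Y\in\KC^b(\SBim_I)$, so it suffices to construct a natural equivalence $\Homc_{\SBim_I}(Y,\pi_L(\FT_{S/I}\otimes X)) \simeq \Homc_{\SBim}(Y,X)$. I would build it in five steps. Serre duality in $\SBim_I$ rewrites the left side as $\Homc_{\SBim_I}(\FT_I\otimes \pi_L(\FT_{S/I}\otimes X),Y)^\vee$; the projection formula $\pi_L(C\otimes Z)\simeq C\otimes \pi_L(Z)$ for $C\in\SBim_I$, which follows formally from rigidity of $\SBim_I$ and the $\pi_L$-inclusion adjunction, pulls $\FT_I$ inside to give $\Homc_{\SBim_I}(\pi_L(\FT\otimes X),Y)^\vee$; the $\pi_L$-inclusion adjunction then turns this into $\Homc_{\SBim}(\FT\otimes X,Y)^\vee$; centrality and invertibility of $\FT$ rewrite the latter as $\Homc_{\SBim}(X,\FT^{-1}\otimes Y)^\vee$; and Serre duality in $\SBim$ applied with $\FT\otimes(\FT^{-1}\otimes Y)=Y$ identifies $\Homc_{\SBim}(X,\FT^{-1}\otimes Y) \simeq \Homc_{\SBim}(Y,X)^\vee$. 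Biduality for bounded complexes of free $R$-modules (which is the shape of the Hom-complexes between Soergel bimodules) collapses the resulting double dual to $\Homc_{\SBim}(Y,X)$. Since each step is natural in $Y$, Yoneda in $\KC^b(\SBim_I)$ delivers the equivalence $\pi_R(X)\simeq \pi_L(\FT_{S/I}\otimes X)$.

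\textbf{Main obstacle.} The principal difficulty is the general-type Serre duality hypothesis itself: the proof of Theorem \ref{thm:introserre} uses input specific to $S_n$, and developing an analogue for arbitrary Coxeter systems is likely the bulk of the remaining work. Secondary points that deserve care are the projection formula at the chain level with the naturality required for Yoneda (including the opposite-sided version), biduality for Hom complexes of Soergel bimodules with enough naturality to justify collapsing the double dual, and the assumed general-type centrality of $\FT$ in $\KC^b(\SBim)$, which is known in type $A$ by \cite{EH2} but is presumably nontrivial in general.
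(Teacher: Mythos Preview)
The statement you are addressing is a \emph{conjecture}; the paper does not prove it in general. It only establishes the special case $W=S_n$, $W_I=S_r\times S_1^{n-r}$, and does so by the opposite logical route: the relative statement (Theorem \ref{thm:relativeSerre}) is proved directly via semi-orthogonal decompositions and the splitting map $\CL_n\to\one$, and the absolute Serre duality (Theorem \ref{thm:introserre}) is then deduced from it by iteration. Your argument runs the implication backwards, reducing the relative statement for arbitrary $I$ to the absolute statement for both $W$ and $W_I$. As a reduction this is correct and clean---your adjunction/Yoneda chain is sound, the projection formula for $\pi_L$ follows from rigidity as you say, and biduality is justified by Lemma \ref{lemma:free homs}---so you have shown
\[
\text{(absolute Serre duality for }W\text{ and }W_I\text{)}\;+\;\text{(centrality of }\FT\text{)}\ \Longrightarrow\ \text{Conjecture \ref{conjecture:generalCox}}.
\]

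The trade-off is that your approach proves nothing unconditionally: both hypotheses you flag (general-type absolute Serre duality and centrality of $\FT$ in $\KC^b(\SBim_W)$) are open outside type $A$, and in the paper's framework the absolute statement is itself the $I=\emptyset$ case of the conjecture. The paper's direct method, by contrast, gives an independent proof of the relative statement from which the absolute one follows; this is what makes the type $A$ result a theorem rather than a reduction. One further subtlety worth noting: your use of Serre duality in $\SBim_I$ requires it for the realization $\mathfrak{h}$ of $W$ restricted to $W_I$, which need not be the reflection-faithful realization of $W_I$, so even a general-type Theorem \ref{thm:introserre} for standard realizations would not immediately supply this input.
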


The results in this paper prove this conjecture in the special case of subgroups $S_r\times (S_1)^{n-r}\subset S_n$.

\subsection{Khovanov-Rozansky homology}
\label{subsec:introKR}

Finally, we apply the above results to relate the ``top" and ``bottom" $a$-degrees in the Khovanov-Rozansky homology.
This categorifies a result of K\'alm\'an \cite{Kalman} relating the ``top" and "bottom" parts of the HOMFLY-PT polynomial.

\begin{theorem}
\label{thm: intro top bottom}
For any braid $\beta$ on $n$ strands one has
$$
\HHH^n(\beta\otimes \FT_n)(-2n)\simeq \HHH^0(\beta).
$$
\end{theorem}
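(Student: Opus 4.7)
The plan is to deduce Theorem~\ref{thm: intro top bottom} from Theorem~\ref{thm:introReform} by bridging $\HH^n$ and $\HH_0$ via a Poincar\'e-type duality for Hochschild (co)homology of the polynomial ring $R=\k[x_1,\ldots,x_n]$. Schematically:
\[
\HHH^n(\beta\otimes\FT_n) \;\stackrel{\text{Poincar\'e}}{\simeq}\; \HHH_0(\beta\otimes\FT_n)(\text{shift}) \;\stackrel{\FT\text{ central}}{=}\; \HHH_0(\FT_n\otimes\beta)(\text{shift}) \;\stackrel{\text{Thm \ref{thm:introReform}}}{\simeq}\; \HHH^0(\beta)(\text{shift}).
\]

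First, I would establish the Poincar\'e duality: for every Soergel bimodule $M$, there is a natural isomorphism $\HH^n(M)\simeq \HH_0(M)(k)$, where $k=\pm 2n$ is determined by the paper's $q$-grading convention. This comes from the self-duality (up to a homological and internal shift) of the Koszul resolution $K_\bullet = R\otimes \Lambda^\bullet V\otimes R \to R$ of $R$ over $R\otimes R$, where $V$ is spanned by the Koszul generators matching the $x_i$'s. Concretely, the top-degree piece $K_n$ is free of rank one, so $\HH^n(M)$ is computed as the cokernel of $\Hom_{R\otimes R}(K_{n-1},M)\to \Hom_{R\otimes R}(K_n,M)$, which is exactly $M/[R,M]\otimes \Lambda^n V^{\ast} = \HH_0(M)$ tensored with a one-dimensional graded line of degree $k$. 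Naturality in $M$ then extends the identification termwise to complexes: $\HH^n(X)\simeq \HH_0(X)(k)$ in $\KC^b(\SBim_{1,\ldots,1})$ for any $X\in \KC^b(\SBim_n)$.

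Next, I would apply Theorem~\ref{thm:introReform} with $X$ equal to the Rouquier complex of $\beta$, obtaining $\HH^0(R_\beta)\simeq \HH_0(\FT_n\otimes R_\beta)$. Since $\FT_n$ lies in the Drinfeld center of $\KC^b(\SBim_n)$ by \cite{EH2}, it commutes up to natural homotopy equivalence with $R_\beta$, so $\HH_0(\FT_n\otimes R_\beta)\simeq \HH_0(R_{\beta\otimes\FT_n})$. Applying the Poincar\'e duality from Step~1 to $R_{\beta\otimes\FT_n}$ converts the right-hand side into $\HH^n(R_{\beta\otimes\FT_n})(-k)$. Chaining the equivalences yields $\HH^0(R_\beta)\simeq \HH^n(R_{\beta\otimes\FT_n})(-k)$ in $\KC^b(\SBim_{1,\ldots,1})$, and passing to cohomology gives $\HHH^0(\beta)\simeq \HHH^n(\beta\otimes\FT_n)(-k)$, which is the desired statement once the sign of $k$ is fixed to match the convention giving $(-2n)$.

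The main obstacle is bookkeeping: tracking grading shifts through the Koszul self-duality, verifying that the Poincar\'e isomorphism is natural enough to extend to complexes compatibly with the tensor structure, and ensuring the centrality isomorphism for $\FT_n$ interacts correctly with $\HH_0$. No further categorical input is required beyond Theorem~\ref{thm:introReform} and the classical Calabi-Yau / Koszul duality for the polynomial ring.
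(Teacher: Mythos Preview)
Your proposal is correct and follows essentially the same route as the paper: combine Theorem~\ref{thm:introReform} (equivalently Theorem~\ref{thm:left right}) with the Koszul self-duality isomorphism $\HH_0(M)\cong \HH^n(M)(-2n)$ (Lemma~\ref{lemma:H homology is cohomology}), exactly as in the remark following Theorem~\ref{thm:left right}.  One small simplification: you need not invoke the Drinfeld center result of \cite{EH2} to commute $\FT_n$ past $R_\beta$, since $\FT_n$ is already central in the braid group $\Br_n$, so $\beta\cdot\FT_n=\FT_n\cdot\beta$ as braids and hence $F(\beta)\otimes\FT_n\simeq\FT_n\otimes F(\beta)$ directly.
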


\begin{remark}
For torus knots, Theorem \ref{thm: intro top bottom} was conjectured in \cite{G,ORS,GORS}. It was recently proved by the fourth author \cite{Nak} based on the explicit computation of the Khovanov-Rozansky homology for torus knots \cite{Mellit}.
\end{remark}

We also prove a ``folk result'' relating the Hochschild cohomology (or homology) of $X\in \KC^b(\SBim_n)$ and its dual $X^\vee$.

\begin{theorem}
Let $\widetilde{\HH}^k(M):=\HH^k(M)(-2k)$.  Then
\[
\widetilde{\HH}^k(X)\cong \widetilde{\HH}^{n-k}(X^\vee)^\vee
\]
as complexes of $R$-modules, for all $X\in \KC^b(\SBim_n)$.
\end{theorem}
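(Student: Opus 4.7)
The plan is to prove the isomorphism termwise: for each Soergel bimodule $B$ I will construct a natural isomorphism of graded $R$-modules
\[
\HH^k(B)\cong \HH^{n-k}(B^\vee)^\vee(2n),
\]
which, upon unwinding $\widetilde{\HH}^k(M):=\HH^k(M)(-2k)$, is precisely $\widetilde{\HH}^k(B)\cong \widetilde{\HH}^{n-k}(B^\vee)^\vee$. Naturality in $B$ then extends this to the desired isomorphism of complexes of $R$-modules for any $X\in\KC^b(\SBim_n)$.

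The main tool is the Koszul resolution $K^\bullet = R\otimes\Lambda^\bullet[\theta_1,\ldots,\theta_n]\otimes R$ of $R$ over $R\otimes R$, with $d\theta_i=x_i\otimes 1-1\otimes x_i$ and $\theta_i$ of bidegree $(-1,2)$. It realizes the Hochschild cochain and chain complexes of a bimodule $M$ as $C^\bullet(M)=M\otimes\Lambda^\bullet[\xi]$ (wedge-plus-commutator differential, with $\xi_i:=\theta_i^*$ of bidegree $(1,-2)$) and $C_\bullet(M)=M\otimes\Lambda^\bullet[\theta]$ (contraction-plus-commutator differential). Two chain-level dualities then combine. \emph{Poincar\'e duality for $R$}: contraction with the top form $\theta_1\wedge\cdots\wedge\theta_n$ intertwines wedging by $\xi_i$ with contraction by $\theta_i$, yielding a natural isomorphism $C^\bullet(B)\cong C_\bullet(B)[n](2n)$. \emph{Serre-type $R$-linear duality}: since $B$ is Soergel, $B^{\vee\vee}\cong B$, and the $\k$-linear identification $(\Lambda^\bullet[\xi])^*\cong \Lambda^\bullet[\theta]$ (flipping bidegrees) gives a natural isomorphism $C_\bullet(B)\cong \Hom_R(C^\bullet(B^\vee),R)$, in which the contraction differential is the transpose of the wedge differential. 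Composing, $C^\bullet(B)\cong \Hom_R(C^\bullet(B^\vee),R)[n](2n)$.

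Taking cohomology in degree $k$ gives $\HH^k(B)$ on the left. On the right, one must commute cohomology with the $R$-linear dual of the bounded complex $C^\bullet(B^\vee)$ of finitely generated free $R$-modules: the universal coefficient spectral sequence collapses to $H^{k-n}(\Hom_R(C^\bullet(B^\vee),R))\cong\HH^{n-k}(B^\vee)^\vee$ as soon as each $\HH^j(B^\vee)$ is a projective (equivalently, free) $R$-module. This is the main obstacle. Fortunately, freeness of Hochschild (co)homology of Soergel bimodules in type $A$ is classical --- essentially due to Khovanov's original triply-graded paper, and provable by induction via the standard $\Delta$-filtration on Soergel bimodules whose subquotients have free Hochschild (co)homology. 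Granting this, the promised isomorphism follows after shift bookkeeping, and naturality extends it termwise to any $X\in\KC^b(\SBim_n)$.
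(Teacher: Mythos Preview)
Your proof is correct and follows essentially the same route as the paper. The paper separates the two chain-level dualities you describe into two lemmas---Koszul self-duality $\HH_k(M)\cong\HH^{n-k}(M)(-2n)$ (your ``Poincar\'e duality'') and the statement that the Hochschild chain complex of $B^\vee$ is the $R$-linear dual of the Hochschild cochain complex of $B$ (your ``Serre-type duality'')---and then invokes freeness of $\HH^j$ on Soergel bimodules, proved via the Markov moves on Bott-Samelson bimodules, exactly the argument from Khovanov's paper you cite. One small caution: your alternative sketch of freeness via the $\Delta$-filtration would need a bit more care than you indicate, since one must first check freeness of $\HH^j$ on the standard subquotients $R_w$ and then that the resulting extensions split; the Markov-move argument is the cleaner route here.
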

If $\b$ is a braid, let $r(\b)$ denote the reversed braid, defined by $r(\sigma_i^{\pm})=\sigma_i^{\pm}$ for each elementary braid generator $\sigma_i^{\pm}$, and $r(\b \b') = r(\b') r(\b)$ for all $\b,\b'\in \Br_n$.  If $L$ is a link which is presented as the closure of a braid $\b$, then the mirror image $\overline{L}$ can be presented as the closure of the reversed inverse braid $r(\b\inv)$. There is an anti-involution of $\SBim_n$ defined by switching the right and left actions of all bimodules, which exchanges the Rouquier complexes for $\b$ and its reverse $r(\b)$.  It follows that $\HH^k(\b)\cong \HH^k(r(\b))$. Since the Rouquier complexes satisfy $F(\b\inv) = F(\b)^\vee$, we obtain the following corollary:

\begin{corollary}
We have $\widetilde{\HH}^k(\b)\cong \widetilde{\HH}^{n-k}(\b\inv)^\vee$ as complexes of $R$-modules, for all braids $\b\in \Br_n$.  In particular the complexes which compute the Khovanov-Rozansky homologies of $L$ and $\overline{L}$ are graded dual as complexes of free $R$-modules.
\end{corollary}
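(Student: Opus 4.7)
The proof should be essentially a formal deduction from the preceding theorem together with the two compatibilities mentioned in the paragraph just before the corollary, so I would not expect any serious technical obstacle here; the work has all been done in the theorem that $\widetilde{\HH}^k(X)\cong \widetilde{\HH}^{n-k}(X^\vee)^\vee$.

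The plan is to apply that theorem to $X=F(\b)$, which gives
\[
\widetilde{\HH}^k(\b)=\widetilde{\HH}^k(F(\b))\cong \widetilde{\HH}^{n-k}(F(\b)^\vee)^\vee.
\]
Then I would invoke the Rouquier-duality identity $F(\b\inv)=F(\b)^\vee$ stated in the paragraph to rewrite $\widetilde{\HH}^{n-k}(F(\b)^\vee)$ as $\widetilde{\HH}^{n-k}(\b\inv)$, yielding the asserted isomorphism $\widetilde{\HH}^k(\b)\cong \widetilde{\HH}^{n-k}(\b\inv)^\vee$. This is a chain of equalities and the single application of the theorem; the only thing to double-check is that the grading shift implicit in $\widetilde{\HH}$ is compatible with $R$-linear duality, which it is since $\vee$ reverses the internal grading.

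For the ``in particular'' part, I would combine the above with the reversal isomorphism. If $L=\widehat{\b}$ then $\overline{L}=\widehat{r(\b\inv)}$, and the anti-involution interchanging the left and right $R$-actions gives $\HH^k(r(\b\inv))\cong \HH^k(\b\inv)$, hence $\widetilde{\HH}^k(r(\b\inv))\cong \widetilde{\HH}^k(\b\inv)$. Applying the first part of the corollary with $\b$ replaced by $\b\inv$ then identifies $\widetilde{\HH}^k(\b\inv)$ with $\widetilde{\HH}^{n-k}(\b)^\vee$. Summing over $k$ and reindexing $n-k\mapsto k$ shows that the total complex computing $\HHH(\overline{L})$ is the graded $R$-linear dual of the total complex computing $\HHH(L)$, which is the desired statement.

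The only substantive thing to verify carefully is that each of these isomorphisms is natural and takes place at the level of complexes of free $R$-modules (not merely on cohomology), so that the final sentence about complexes being graded dual — rather than just the homologies — is justified. This follows because Rouquier complexes are complexes of Soergel bimodules, the functor $\HH^k$ produces complexes of free $R$-modules on such input (as recalled in the introduction), and the anti-involution and $F(\b\inv)=F(\b)^\vee$ both hold at the chain level up to canonical homotopy equivalence.
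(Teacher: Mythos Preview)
Your proposal is correct and follows exactly the approach indicated in the paper: the paragraph preceding the corollary lays out precisely the two ingredients you use (the identity $F(\b)^\vee \simeq F(\b\inv)$ and the reversal isomorphism $\HH^k(\b)\cong \HH^k(r(\b))$), and the corollary is simply the formal combination of these with the theorem $\widetilde{\HH}^k(X)\cong \widetilde{\HH}^{n-k}(X^\vee)^\vee$. Your additional remarks about naturality and freeness are accurate and match the level of justification implicit in the paper.
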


\subsection{Remark on conventions}
\label{subsec:intro other rings}
In this paper we have made the choice to work with honest Soergel bimodules rather than the diagrammatic version of Elias-Khovanov \cite{EK}, so that we may discuss Hochschild (co)homology.  Also, we have chosen to work over an infinite field $\k$ of characteristic $\neq 2$, so that Soergel's results apply.  When $\k$ is a more general ring, one can still define $\SBim_n$, but one loses control over the indecomposable objects in $\SBim_n$.  Nonetheless, we believe that all of our main results should hold over $\Z$, but where $\KC^b(\SBim_n)$ gets replaced by the homotopy category of complexes of Bott-Samelson bimodules.  These two categories are equivalent when Soergel's results apply.

\section*{Acknowledgements}

The authors would like to thank  Tam\'as K\'alm\'an, Andrei Negu\cb{t}, Alexei Oblomkov and Jacob Rasmussen for the useful discussions. We also thank American Institute of Mathematics, where a part of this work was done, for hospitality.
E. G.~ was partially supported by the NSF grants DMS-1700814, DMS-1760329, and the Russian Academic Excellence Project 5-100.  M.H.~ was supported by NSF grant DMS-1702274 and also partially supported by NSF grants DMS-1664240 and DMS-1255334. A.M.~ was supported by Austrian Science Fund (FWF) projects Y963-N35 and P-31705. K.N.~ was supported by JSPS KAKENHI Grant Number JP19J12350.

\section{Decategorified story}

\subsection{Jones-Ocneanu trace}

Let $\mathbb{H}_n$ be the Hecke algebra for $S_n$.  We adopt Soergel's conventions below.  We will work over the field $\Q(q)$ or occasionally $\Q(q)\subset \Q(v)$, where $q=v^{-2}$. (The variable $v$ corresponds to the grading downshift endofunctor (1) of $\SBim_n$.)
The algebra $\HM_n$ is formally generated by elements $H_1,\ldots,H_{n-1}$ modulo the braid relations and 
\[
(H_i+v)(H_i-v\inv)=0.
\] 
Given $w\in S_n$ with a reduced expression $w=s_{i_1}\cdots s_{i_k}$, we define $H_w=H_{i_1}\cdots H_{i_k}$. 
 
The algebra $\mathbb{H}_n$ has two standard bases as a $\Q(v)$-vector space, namely the positive standard basis $\{H_w\}_{w\in S_n}$ and the negative standard basis $\{{H_{w\inv}\inv}\}_{w\in S_n}$.

\begin{remark}
It is also common to express everything above in terms of $T_w:=(-v)^{\ell(w)}H_w$, where $\ell(w)$ is the Bruhat length of $w$.
\end{remark}
Jones and Ocneanu \cite{Jones,HOMFLY}  defined a trace function $\Tr \colon \mathbb{H}_n \rightarrow \Q(v)[a]$,
which (up to a normalization factor) agrees with the HOMFLY-PT polynomial. We define Jones-Ocneanu trace in section \ref{subsec:decat decomp} and list some of its most important properties here. 

For $x\in \mathbb{H}_n$, $\Tr(x)$ is a polynomial in $a$ of degree at most $n$ with coefficients being rational functions in $v$. Let $\Tr^n(x)$ (resp. $\Tr^0(x)$) be the coefficient of $a^n$ (resp. $a^0$) in $\Tr(x)$. 


\begin{lemma}
\label{lem: trace as coefficient at 1}
If we express $x\in \mathbb{H}_n$ in the positive and negative standard bases as
$$
x=\sum_{w\in S_n}\phi_w H_w=\sum_{w\in S_n}\psi_w H_{w\inv}\inv,
$$
then we have $\Tr^n(x)=(1-q)^{-n}\phi_1$ and $\Tr^0(x)=(1-q)^{-n}\psi_1$.
\end{lemma}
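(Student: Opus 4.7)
The plan is to prove $\Tr^n(x) = (1-q)^{-n}\phi_1$ by induction on $n$, using cyclicity and the Markov property of $\Tr$; the statement for $\Tr^0$ will follow by an entirely parallel argument, with positive generators and Markov factors replaced by their negative counterparts. Since both sides are $\Q(v)$-linear in $x$, it suffices to check the identity on the basis elements $H_w$, i.e., to verify that $\Tr^n(H_w) = (1-q)^{-n}\delta_{w,e}$ for all $w\in S_n$. The base case $n=1$ is immediate: $\mathbb{H}_1$ is one-dimensional with $\Tr(1) = (1+a)/(1-q)$, whose coefficient of $a^1$ is $(1-q)^{-1}$.

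For the inductive step, I would decompose $w = c_k u$ uniquely with $u \in S_{n-1}$ and $c_k = s_{n-1}s_{n-2}\cdots s_{n-k}$ the minimum-length representative of $wS_{n-1}\in S_n/S_{n-1}$ (so $c_0 = e$). Since $\ell(c_k u) = \ell(c_k)+\ell(u)$, one has $H_w = H_{c_k}H_u$ with no correction terms. When $k = 0$, $w \in S_{n-1}$ and the Markov property gives $\Tr(H_w) = U\cdot \Tr_{n-1}(H_w)$, where $U = (1+a)/(1-q)$ is the unknot factor. Since $\Tr_{n-1}$ has $a$-degree at most $n-1$, extracting the coefficient of $a^n$ and invoking the inductive hypothesis yields $\Tr^n(H_w) = (1-q)^{-1}\Tr^{n-1}(H_w) = (1-q)^{-n}\delta_{w,e}$. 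When $k \ge 1$, one has $w \ne e$; applying cyclicity to move $H_{n-1}$ to the right and then using Markov gives $\Tr(H_w) = M_+\cdot \Tr_{n-1}(H_{\tilde c_k}H_u)$, where $\tilde c_k = s_{n-2}\cdots s_{n-k}\in S_{n-1}$ and $M_+$ is the positive Markov factor.

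The key technical input, and the step I expect to be the main obstacle, is that $M_+$ is independent of $a$ --- explicitly $M_+ = (v^{-1}-v)/(1-q)$ in our conventions --- so its coefficient of $a^1$ vanishes, forcing $\Tr^n(H_w) = 0$ when $k \ge 1$, as required.  This formula for $M_+$ is extracted from the definition of $\Tr$ in \S\ref{subsec:decat decomp}, most economically by a short two-strand computation combining the Hecke relation $H_i - H_i^{-1} = v^{-1}-v$, the cyclicity and Markov properties, and the base value $\Tr(1) = (1+a)/(1-q)$.  For the $\Tr^0$ statement the corresponding obstruction is to show that the constant term in $a$ of $M_-$ vanishes, that is, $M_- = (v-v^{-1})a/(1-q)$ is divisible by $a$; this follows from $M_+$ via the same Hecke relation, after which the induction runs in exact parallel by replacing $H_w$ with $H_{w^{-1}}^{-1}$ and writing $H_{w^{-1}}^{-1} = H_{n-1}^{-1}\bigl(H_{n-2}^{-1}\cdots H_{n-k}^{-1}\bigr)H_{u^{-1}}^{-1}$.
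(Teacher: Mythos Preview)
Your proof is correct and follows essentially the same idea as the paper: reduce by linearity to the basis elements, and use that the positive Markov factor carries no $a$-contribution (while the negative one is divisible by $a$) to kill $\Tr^n(H_w)$ for $w\neq 1$ (resp.\ $\Tr^0(H_{w^{-1}}^{-1})$) and induct. The paper simply cites forward to Lemma~\ref{lemma:ptr decat}, which packages exactly these Markov factors as $\pTr(xH_{n-1}y)=-vxy$ and $\pTr(xH_{n-1}^{-1}y)\in a\cdot\mathbb H_{n-1}$. One small remark: your appeal to cyclicity is unnecessary, since the partial-trace identity already allows $H_{n-1}$ in the middle (your $H_w=1\cdot H_{n-1}\cdot(H_{\tilde c_k}H_u)$ is handled directly), so the argument can be streamlined to match the paper exactly.
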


Indeed, it follows from Lemma \ref{lemma:ptr decat} below that for $w\neq 1$ one has $\Tr^n(H_w)=0$ and $\Tr^0(H_{w\inv}\inv)=0$, while $\Tr^0(1)=\Tr^n(1)=(1-q)^{-n}$.

Let $\e\colon \mathbb{H}_n \rightarrow \Q(v)$ be the vector space projection $\sum_{w\in S_n}\psi_w H_{w\inv}\inv\mapsto \psi_1$, and let
$(-)^\vee\colon \mathbb{H}_n \rightarrow \mathbb{H}_n$ be the ring anti-automorphism defined by $H_i^\vee=H_{i}\inv$ and $v^\vee=v^{-1}$.
(We remark that $(H_{w\inv}\inv)^{\vee}=H_{w^{-1}}$.)
Define a pairing $\ip{-,-}\colon \mathbb{H}_n\times \mathbb{H}_n\rightarrow \Z[q^{\pm}]$ over $\mathbb{H}_n$ by $\ip{x,y}:=\e(yx^\vee)$.
By the definition, we have $\ip{xz,y} = \ip{x,yz^{\vee}}$. We also have  $\e(xy)=\e(yx)$, hence $\ip{zx,y} = \ip{x,z^{\vee}y}$. 

\begin{remark}
This is the pairing which is categorified by the hom pairing of Soergel bimodules (see for instance \cite{EK}, modulo conventions).
\end{remark}

\begin{theorem}[\cite{Kalman}]
For all $x\in \mathbb{H}_n$ one has $\ip{x\FT,1}=\ip{1,x}^{\vee}$ and  $\Tr^n(x\FT)=\Tr^0(x)$.  
\end{theorem}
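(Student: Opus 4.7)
The plan is to use the duality of the positive and negative standard bases under the pairing $\ip{-,-}$, together with the cyclicity stated just before the theorem, to reduce both assertions to an elementary length identity in type A.

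First I would observe that the two standard bases of $\mathbb{H}_n$ are dual under this pairing:
\[
\ip{H_u, H_{w^{-1}}^{-1}} = \delta_{u,w}.
\]
This is a direct check from $\ip{x,y} = \varepsilon(yx^\vee)$, using $H_u^\vee = H_u^{-1}$ and expanding $H_{w^{-1}}^{-1} H_u^{-1} = (H_u H_{w^{-1}})^{-1}$ in the negative basis; it is also a standard fact about the Hecke algebra trace form. It follows that if $x = \sum \phi_u H_u$ and $y = \sum \psi_w H_{w^{-1}}^{-1}$, then $\ip{x,1} = \phi_1(x)^\vee$ and $\ip{1,x} = \psi_1(x)$. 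Consequently, $\ip{x\FT,1} = \ip{1,x}^\vee$ unfolds to $\phi_1(x\FT) = \psi_1(x)$, which by Lemma \ref{lem: trace as coefficient at 1} is exactly $\Tr^n(x\FT) = \Tr^0(x)$. So it suffices to prove one of them; I would prove the pairing version.

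Next I would verify $\phi_1(x\FT) = \psi_1(x)$ on the negative basis. For $x = H_{v^{-1}}^{-1}$, the right side is $\delta_{v,1}$. For the left, the type A length identity $\ell(v^{-1}) + \ell(vw_0) = \ell(w_0)$ (using that $w_0$ is the longest element of $S_n$) gives the Hecke factorization $H_{v^{-1}} H_{vw_0} = H_{w_0}$, and hence
\[
H_{v^{-1}}^{-1} \FT \;=\; H_{v^{-1}}^{-1} H_{w_0}^2 \;=\; H_{vw_0} H_{w_0}.
\]
Using $\ip{xy,1} = \ip{x, y^\vee}$ (a special case of $\ip{xz,y}=\ip{x,yz^\vee}$) together with $w_0 = w_0^{-1}$ (so $H_{w_0}^\vee = H_{w_0^{-1}}^{-1}$ is the negative basis element indexed by $w_0$),
\[
\phi_1(H_{vw_0} H_{w_0})^\vee \;=\; \ip{H_{vw_0} H_{w_0},\,1} \;=\; \ip{H_{vw_0},\, H_{w_0^{-1}}^{-1}} \;=\; \delta_{vw_0,\,w_0} \;=\; \delta_{v,1}
\]
by the duality of the first step. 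Applying $\vee$ gives $\phi_1(H_{vw_0} H_{w_0}) = \delta_{v,1}$, as required.

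The main obstacle is establishing the duality $\ip{H_u, H_{w^{-1}}^{-1}} = \delta_{u,w}$: although classical, a careful proof must check that the non-reduced products $H_u H_{w^{-1}}$ (which arise when $u \ne w$) always invert to negative-basis elements with vanishing constant term. This can be handled by induction on $\ell(u)+\ell(w)$ using the quadratic relation $H_s^2 = (v^{-1}-v)H_s + 1$, or by invoking the standard properties of the Jones--Ocneanu trace directly.
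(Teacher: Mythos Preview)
Your argument is correct and uses the same ingredients the paper points to: the duality $\ip{H_u,H_{w^{-1}}^{-1}}=\delta_{u,w}$ of the two standard bases together with the length identity $\ell(v^{-1})+\ell(vw_0)=\ell(w_0)$ for the longest element. The paper phrases the core computation as ``$\phi_{w_0}=\psi_{w_0}$ in the expansions of $x\HT$'' (equivalently $\ip{\HT,H_{w^{-1}}^{-1}}=\delta_{w,w_0}$), whereas you multiply through by one more $\HT$ and check $\phi_1(x\FT)=\psi_1(x)$ on the negative basis; after your rewrite $H_{v^{-1}}^{-1}\FT=H_{vw_0}H_{w_0}$, the final pairing $\ip{H_{vw_0},H_{w_0^{-1}}^{-1}}=\delta_{v,1}$ is literally a special case of the duality, so the two packagings are the same.
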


In \cite{Kalman}, this is proved by the fact that $\phi_{w_0}=\psi_{w_0}$ in the expansions of $x\HT$.
Equivalently, one has $\ip{\HT,H_{w\inv}\inv}=0$ if $w\neq w_0$ and $\ip{\HT,\HT^{-1}}=1$.
It is also known that the basis $\{H_{w\inv}\inv\}_{w\in S_n}$ is the dual basis of $\{H_w\}_{w\in S_n}$ with respect to this pairing.
(In fact, this orthogonality holds for Hecke algebras with any Coxeter group. For more details and a categorified result, see Appendix \ref{app:anytype}.)

\subsection{Partial traces}
\label{subsec:decat decomp}

We define Jones-Ocneanu trace on the Hecke algebra and its ``partial analogues" following \cite{Jones}.  The somewhat nonstandard conventions below are chosen to match with the categorical picture, in $\SBim_n$, discussed later.

The algebra $\mathbb{H}_n$ may be regarded as a bimodule over $\mathbb{H}_{n-1}$, and we have an isomorphism
\[
\mathbb{H}_n \  \  \cong  \ \  \mathbb{H}_{n-1} \ \oplus  \ \mathbb{H}_{n-1}\otimes_{\mathbb{H}_{n-2}}\mathbb{H}_{n-1}
\]
This isomorphism is not canonical, but two natural choices are (recall that $q=v^{-2}$)
\begin{equation}\label{eq:twoisos}
 \mathbb{H}_{n-1} \ \oplus  \ \mathbb{H}_{n-1}\otimes_{\mathbb{H}_{n-2}}\mathbb{H}_{n-1}\buildrel\Phi^{\pm} \over \longrightarrow \mathbb{H}_n, \qquad\qquad (x,y\otimes z)\mapsto (1-q)x + y H_{n-1}^\pm z.
\end{equation}

With respect to these isomorphisms, the induced projections $\pTr^\pm:\mathbb{H}_n\rightarrow \mathbb{H}_{n-1}$ are characterized by
\[
\pTr^\pm(x) = \frac{1}{1-q}x,\qquad\quad  \pTr^{\pm}(xH_{n-1}^{\pm} y) =0
\]
for all $x,y\in \mathbb{H}_{n-1}$.  We have a close relationship between $\pTr^\pm$ and the Jones-Ocneanu trace.

\begin{definition}
Let $\pTr:\mathbb{H}_n[a]\rightarrow \mathbb{H}_{n-1}[a]$ be the map defined by $\pTr(x)=\pTr^-(x) + a\pTr^+(x)$.
\end{definition}

\begin{lemma}\label{lemma:ptr decat}
The map $\pTr:\mathbb{H}_n\rightarrow \mathbb{H}_{n-1}$ satisfies 
\[
\pTr(x)=\frac{1+a}{1-q} x, \qquad \qquad \pTr(x H_{n-1} y) = -v xy,\qquad\qquad \pTr(x H_{n-1}\inv y) =  av\inv xy,
\]
for all $x,y \in \mathbb{H}_{n-1}$.  Furthermore, the composition
\[
\mathbb{H}_n[a] \buildrel \pTr \over\longrightarrow \mathbb{H}_{n-1}[a]\buildrel \pTr \over\longrightarrow \cdots \buildrel \pTr \over\longrightarrow   \mathbb{H}_{0}[a]=\Q(q)[a]
\]
is the Jones-Ocneanu trace.
\end{lemma}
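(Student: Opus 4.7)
My plan is to handle the three Markov-type formulas one at a time and then argue the iteration identification separately. The first identity is immediate from the definitions: for $x\in \mathbb{H}_{n-1}\subset \mathbb{H}_n$, the preimage of $x$ under $\Phi^{\pm}$ is $(x/(1-q),0)$, so $\pTr^{\pm}(x)=x/(1-q)$ and therefore $\pTr(x) = (1+a)x/(1-q)$.

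The two Markov-type formulas are the main computational content, and I would derive them from the Hecke quadratic relation $(H_{n-1}+v)(H_{n-1}-v^{-1}) = 0$, which rearranges to $H_{n-1} - H_{n-1}^{-1} = v^{-1}-v$; this identity lets us translate between the $\Phi^+$ and $\Phi^-$ decompositions. For $xH_{n-1}y$, the $\pTr^+$-projection vanishes tautologically, since $xH_{n-1}y$ is the image of a pure tensor under $\Phi^+$. To compute $\pTr^-(xH_{n-1}y)$, I would substitute $H_{n-1} = H_{n-1}^{-1} + (v^{-1}-v)$ so that $xH_{n-1}y = xH_{n-1}^{-1}y + (v^{-1}-v)xy$: the first summand lies in the tensor summand of $\Phi^-$ and is killed by $\pTr^-$, while the second is a pure element of $\mathbb{H}_{n-1}$; normalizing by $(1-q)=1-v^{-2}$ produces the coefficient $-v$. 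A completely analogous argument with the roles of $\Phi^+$ and $\Phi^-$ exchanged yields the formula for $\pTr(xH_{n-1}^{-1}y)$.

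For the final assertion, I would induct on $n$, with the base case $\mathbb{H}_0[a]=\Q(v)[a]$ trivial. The Jones-Ocneanu trace is uniquely characterized by its normalization on $1$, the trace property $\Tr(xy)=\Tr(yx)$, and the Markov moves $\Tr(xH_{n-1}^{\pm})=c^\pm \Tr(x)$ for $x\in\mathbb{H}_{n-1}$, with constants $c^\pm$ agreeing with the HOMFLY-PT skein relations. The formulas just established realize precisely these Markov moves for $\pTr$, so the iterated composition inherits the characterizing properties step by step and must agree with $\Tr$. I expect the main obstacle to be the precise bookkeeping of scalar factors — distinguishing $v$, $v^{-1}$, $(1-q)$, and the weight $a$ — when passing between the two decompositions $\Phi^\pm$; a secondary concern is verifying the cyclicity $\Tr(xy)=\Tr(yx)$ after iteration, which I would handle by invoking uniqueness of the Jones-Ocneanu trace rather than checking it directly on a basis.
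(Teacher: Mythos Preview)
Your approach is correct; the paper states this lemma without proof, so there is nothing to compare against. Using $H_{n-1}-H_{n-1}^{-1}=v^{-1}-v$ to translate between the decompositions $\Phi^{\pm}$ and then dividing by $1-q=1-v^{-2}$ is exactly the right maneuver, and it yields $\pTr^-(xH_{n-1}y)=-v\,xy$ as stated.

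One caution: carrying out the ``completely analogous'' computation for $xH_{n-1}^{-1}y$ gives $(v-v^{-1})/(1-q)=v$, hence $\pTr(xH_{n-1}^{-1}y)=av\,xy$, not $av^{-1}xy$. This is forced already by linearity, since $\pTr(H_{n-1})-\pTr(H_{n-1}^{-1})$ must equal $\pTr(v^{-1}-v)=(v^{-1}-v)\tfrac{1+a}{1-q}=-v(1+a)$; with $\pTr(H_{n-1})=-v$ this gives $\pTr(H_{n-1}^{-1})=av$. So the printed $v^{-1}$ appears to be a typo, and your method produces the internally consistent value.

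For the final assertion, note that in this paper the Jones--Ocneanu trace is \emph{defined} as the iterated $\pTr$, so the ``Furthermore'' amounts only to matching Jones's original characterization via the Markov properties---which is what your uniqueness argument handles. To sidestep your worry about cyclicity, it is cleaner to run uniqueness the other way: the three relations already determine $\pTr$ uniquely as a linear map (since $\mathbb{H}_n=\mathbb{H}_{n-1}\oplus\mathbb{H}_{n-1}H_{n-1}\mathbb{H}_{n-1}$), and Jones's trace visibly satisfies them.
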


In particular, we have $(\pTr^-)^n(x)=\Tr^0(x)$ and $(\pTr^+)^n(x)=\Tr^n(x)$ for $x \in \mathbb{H}_n$.
We wish to categorify this story.  It will turn out that the functors which categorify $\pTr^\pm$ are related by a relative version of Serre duality.

\section{Background}
\label{sec:background}



In this section we discuss the backround in Soergel bimodules and Khovanov-Rozansky homology.  Throughout this paper, let $\k$ be an infinite field of characteristic $\neq 2$.  This guarantees that the results of \cite{Soergel} apply, though we will only need these results in type $A$.

\subsection{Soergel bimodules}
\label{subsec:SBim}
Fix an integer $n\geq 1$, and let $R=\k[x_1,\ldots,x_n]$. The ring $R$ is graded such that the variables $x_i$ have degree 2.  The notions of an $R$-bimodule and an $R^e$-module will be identified, where $R^e = R\otimes_\k R = \k[x_1,\ldots,x_n,x_1',\ldots,x_n']$.  



Let $B_i$ denote the \emph{elementary bimodules} $B_1,\ldots,B_{n-1}$, defined by
$$
B_i=R\otimes_{R^{(i,i+1)}}R(1),
$$
where $R^{(i,i+1)}\subset R$ is the subalgebra of polynomials which are symmetric in $x_i,x_{i+1}$.  We may identify $B_i(-1)$ with $\k[x_1,\ldots,x_n,x'_1,\ldots, x'_n]$ modulo the ideal generated by
\[
x_i+x_{i+1}-x'_{i}-x'_{i+1},\qquad \ \ \ x_ix_{i+1}-x'_ix'_{i+1}, \qquad \ \ \ x_j-x'_j\ (j\neq i,i+1).
\]

\begin{definition}
Let $\SBim_n$ denote the full subcategory of graded $(R,R)$-bimodules generated by $R,B_1,\ldots,B_{n-1}$ and closed under direct sums, tensor product $\otimes_R$, gradings shifts $(\pm 1)$, and direct summands (i.e.~retracts).  A tensor product of shifts of elementary bimodules $B_i$ is called a \emph{Bott-Samelson bimodule}; by convention, $R$ (the ``empty tensor product'') is also regarded as a Bott-Samelson bimodule.
\end{definition}


\begin{notation}
Henceforth, the tensor product $\otimes_R$ will simply be denoted $\otimes$.
\end{notation}  

The category $\SBim_n$ is additive but not abelian.  An important result of Soergel \cite{Soergel} states that,  up to isomorphism and shift, the indecomposables $B_w$ in $\SBim_n(\k)$ are in one-to-one correspondence with $w\in S_n$.  Furthermore, if $s_{i_1}\cdots s_{i_\ell}$ is a reduced expression of $w\in S_n$, then the Bott-Samelson bimodule $B_{s_1}\otimes\cdots \otimes B_{s_\ell}$ has a unique summand isomorphic to $B_w$, and the remaining summands are $B_v$ for elements $v\in S_n$ of shorter length. 


The morphisms in $\SBim_n$ are degree preserving $R$-bilinear maps.  In this paper we almost exclusively work with space of morphisms of arbitrary degree $\Hom^{\Z}(M,N)=\bigoplus\Hom(M,N(i))$.  In fact, $\Hom^{\Z}$ will occur so often that we will simply write $\Hom^\Z=\Hom$ by abuse, and we will write $\Hom^0$ when we wish to emphasize degree zero morphisms.  By convention, every arrow $M\rightarrow N$ will be a degree preserving map in whatever category, unless otherwise specified.

Now, the (graded) hom spaces $\Hom(M,N)$ are graded $R$-bimodules, via
\[
f\cdot \phi\cdot g : m\mapsto f\phi(m)g = \phi(fmg)
\]
for all $f,g\in R$, $m\in M$, and $\phi\in \Hom(M,N)$.

Given two complexes $A=(A^\bullet,d_A)$ and $B=(B^\bullet,d_B)$ in $\KC^b(\SBim_n)$, we define a complex $\Homc(A,B)=(C_k,d_{\Homc})$ where  
$$
C^k=\prod_{i\in \Z}\Hom(A^i,B^{i+k}),\ \qquad  d_{\Homc}(f):=d_B\circ f - (-1)^k f\circ d_A.
$$
\begin{notation}
It would be more precise to write $\Homc^{\Z\times \Z}_{\KC^b(\SBim_n)}(A,B)$ instead of $\Homc(A,B)$.
\end{notation}

\subsection{Hochschild (co)homology}
\label{subsec:hochschild}
If $M$ is a graded $R$-bimodule, the zeroth Hochschild cohomology $\HH^0(M)$ is defined to be the sub-bimodule of $M$ spanned by homogeneous elements $m\in M$ with $x_im=mx_i$ for all $i=1,\ldots,n$.  Note that by definition,
\begin{equation}
\label{eq:HH^0 as hom}
\HH^0(M)\cong \Hom(R,M).
\end{equation}
Dually, the zeroth Hochschild homology $\HH_0(M)$ is defined to be the quotient bimodule $M/[R,M]$, that is to say $M$ modulo the $\k$-submodule spanned by commutators $x_im - mx_i$ for all homogeneous elements $m\in M$ and all $i=1,\ldots,n$.

The higher derived functors of $\HH^0$ and $\HH_0$ are denoted by $\HH^k$ and $\HH_k$; they are zero outside the range $0\leq k\leq n$.

Self-duality of the Koszul resolution of $R$ as a graded bimodule implies the following.
\begin{lemma}\label{lemma:H homology is cohomology}
For each graded $R$-bimodule $M$, we have
\[
\HH_{k}(M)\cong \HH^{n-k}(M)(-2n).
\]\qed
\end{lemma}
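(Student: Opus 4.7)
The plan is to derive the isomorphism directly from self-duality of the Koszul resolution of $R$ over the enveloping algebra $R^e := R\otimes_\k R$. Let $V$ be the graded $\k$-vector space with basis $\xi_1,\ldots,\xi_n$ in internal degree $2$; then $\Lambda^n V$ is one-dimensional, concentrated in internal degree $2n$, so $\Lambda^n V \cong \k(-2n)$ in the paper's shift convention. Since the elements $y_i := x_i\otimes 1 - 1\otimes x_i$ form a regular sequence in $R^e$ with quotient $R$, one has the Koszul resolution
\[
K_\bullet := R^e\otimes_\k \Lambda^\bullet V \twoheadrightarrow R
\]
with differential $d_K(\xi_i) = y_i$ extended by the graded Leibniz rule.

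Using $K_\bullet$, I would identify the two Hochschild complexes
\[
K_\bullet\otimes_{R^e}M \;\cong\; \bigl(M\otimes_\k \Lambda^\bullet V,\ d_{\HH_*}\bigr), \qquad \Hom_{R^e}(K_\bullet, M)\;\cong\; \bigl(M\otimes_\k \Lambda^\bullet V^*,\ d_{\HH^*}\bigr),
\]
with differentials
\[
d_{\HH_*}(m\otimes\alpha) = \sum_{i} [x_i, m]\otimes \iota_{\xi_i^*}(\alpha), \qquad d_{\HH^*}(m\otimes\beta) = \sum_{i} [x_i, m]\otimes (\xi_i^*\wedge \beta).
\]
The $k$-th homology and cohomology of these complexes then compute $\HH_k(M)$ and $\HH^k(M)$ respectively.

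Next I would invoke Poincar\'e duality on the exterior algebra. Fixing the top form $\omega := \xi_1\wedge\cdots\wedge\xi_n$, contraction against $\omega$ gives a degree-preserving isomorphism
\[
\phi_k:\ (\Lambda^{n-k}V^*)(-2n)\ \xrightarrow{\ \sim\ }\ \Lambda^k V, \qquad \beta\mapsto \iota_\beta(\omega),
\]
since both sides are concentrated in internal degree $2k$. The identity $\iota_{\xi_i^*\wedge \beta}(\omega) = \iota_{\xi_i^*}(\iota_\beta(\omega))$ shows that, after tensoring with $M$, the family $\phi_\bullet$ intertwines $d_{\HH^*}$ with $d_{\HH_*}$ up to a sign depending only on $k$. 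This yields an isomorphism of complexes of graded bimodules
\[
\bigl(M\otimes_\k \Lambda^{n-\bullet}V^*,\, d_{\HH^*}\bigr)(-2n)\ \cong\ \bigl(M\otimes_\k \Lambda^\bullet V,\, d_{\HH_*}\bigr),
\]
and passing to cohomology in degree $n-k$ gives the desired $\HH^{n-k}(M)(-2n)\cong \HH_k(M)$.

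The only real obstacle is bookkeeping the internal grading shift and the signs in Koszul self-duality. The shift $(-2n)$ arises from $\Lambda^n V$ being concentrated in internal degree $2n$, while the signs from exterior-algebra identities can be absorbed into $\phi_\bullet$ by rescaling each $\phi_k$ by a suitable constant $\pm 1$, leaving the isomorphism type of the resulting complexes (and hence their (co)homology) unaffected.
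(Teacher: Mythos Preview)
Your argument is correct and follows exactly the approach indicated in the paper: the paper simply asserts that the lemma follows from self-duality of the Koszul resolution of $R$ as a graded $R^e$-module and omits the details, while you have spelled out precisely that self-duality via Poincar\'e duality on $\Lambda^\bullet V$ and tracked the internal grading shift $(-2n)$ coming from $\Lambda^n V$.
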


We will regard $\HH^k(M)$ and $\HH_k(M)$ as graded $R,R$-bimodules on which the left and right $R$-actions coincide.  That is to say, $\HH^k$ and $\HH_k$ may be viewed as endofunctors of the category of graded $R$-bimodules.

We have the following ``Markov moves'' for $\HH_k$ and $\HH^k$.
\begin{lemma}\label{lemma:HH markovs}
Let $M\in \SBim_{n-1}$ be given.  Then
\begin{subequations}
\begin{equation}\label{eq:trivialMarkovUpper}
\HH^k(M\sqcup \one_1)\cong \Big(\HH^k(M)\sqcup \one_1\Big) \oplus \Big(\HH^{k-1}(M)\sqcup \one_1\Big)(2)
\end{equation}
\begin{equation}\label{eq:interestingMarkovUpper}
\HH^k\Big((M\sqcup \one_1)\otimes B_{n-1}\otimes (N\sqcup \one_1)\Big)\cong  \Big(\HH^k(M\otimes N)\sqcup \one_1\Big)(-1) \oplus \Big(\HH^{k-1}(M\otimes N)\sqcup \one_1\Big)(3)
\end{equation}
\begin{equation}\label{eq:trivialMarkovLower}
\HH_k(M\sqcup\one_1)\cong \Big(\HH_k(M)\sqcup \one_1\Big) \oplus \Big(\HH_{k-1}(M)\sqcup \one_1\Big)(-2)
\end{equation}
\begin{equation}\label{eq:interestingMarkovLower}
\HH_k\Big((M\sqcup \one_1)\otimes B_{n-1}\otimes (N\sqcup \one_1)\Big)\cong  \Big(\HH_k(M\otimes N)\sqcup \one_1\Big)(1) \oplus \Big(\HH_{k-1}(M\otimes N)\sqcup \one_1\Big)(-3)
\end{equation}
\end{subequations}
\end{lemma}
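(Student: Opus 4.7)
My plan is to first use Lemma \ref{lemma:H homology is cohomology}, which identifies $\HH_k(-)$ with $\HH^{n-k}(-)(-2n)$, to reduce both \eqref{eq:trivialMarkovLower} and \eqref{eq:interestingMarkovLower} to their upper counterparts \eqref{eq:trivialMarkovUpper} and \eqref{eq:interestingMarkovUpper}. For \eqref{eq:trivialMarkovUpper}, I would use the identification $M \sqcup \one_1 \cong M \otimes_\k \k[x_n]$ as $R_n$-bimodules. Combined with the factorization $R_n = R_{n-1} \otimes_\k \k[x_n]$, this makes the Koszul resolution of $R_n$ over $R_n^e = R_{n-1}^e \otimes_\k \k[x_n]^e$ split as a tensor product, and the Künneth formula for Hochschild cohomology (valid over the field $\k$) gives
\[
\HH^k_{R_n}(M \otimes_\k \k[x_n]) \cong \bigoplus_{i+j=k} \HH^i_{R_{n-1}}(M) \otimes_\k \HH^j_{\k[x_n]}(\k[x_n]).
\]
A direct computation using the two-term Koszul complex for $\k[x_n]$ yields $\HH^0_{\k[x_n]}(\k[x_n]) = \k[x_n]$ and $\HH^1_{\k[x_n]}(\k[x_n]) = \k[x_n](2)$, giving \eqref{eq:trivialMarkovUpper}.

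For \eqref{eq:interestingMarkovUpper}, I plan to simplify the bimodule structure first and then compute Hochschild by iteration. Using $M \sqcup \one_1 \cong M \otimes_{R_{n-1}} R_n$ and $N \sqcup \one_1 \cong R_n \otimes_{R_{n-1}} N$ and cancelling the outer $R_n$-tensors in $(M \sqcup \one_1) \otimes_{R_n} B_{n-1} \otimes_{R_n} (N \sqcup \one_1)$ yields an isomorphism of $R_n$-bimodules
\[
Y \;\cong\; M \otimes_{R_{n-1}} B_{n-1} \otimes_{R_{n-1}} N,
\]
where the $R_n$-bimodule structure on the right comes from the middle $B_{n-1}$. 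I would then compute $\HH^*_{R_n}(Y) = \HH^*_{R_{n-1}}(\HH^{*,x_n}(Y))$ using the split factorization $R_n = R_{n-1} \otimes_\k \k[x_n]$. The main calculation is that of $\HH^{*, x_n}(B_{n-1})$: in the right-$R_n$-module basis $\{1 \otimes 1,\, x_n \otimes 1\}$ of $B_{n-1}(-1) = R_n \otimes_{R^{s_{n-1}}} R_n$, and using the defining relations $e_1 \otimes 1 = 1 \otimes e_1$ and $e_2 \otimes 1 = 1 \otimes e_2$ for $e_1 = x_{n-1} + x_n$ and $e_2 = x_{n-1} x_n$, one computes that the operator $\xi := x_n \otimes 1 - 1 \otimes x_n$ acts by $(a,b) \mapsto (a + x_{n-1} b)(-x_n, 1)$. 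Hence $\ker \xi \cong R_n(-2)$ is generated by $x_n \otimes 1 - 1 \otimes x_{n-1}$, and $\operatorname{coker} \xi \cong R_n(2)$, both carrying the trivial $R_n$-bimodule structure (a direct verification using the two relations above). After the $(1)$-shift from $B_{n-1}$ we obtain $\HH^{0, x_n}(B_{n-1}) \cong R_n(-1)$ and $\HH^{1, x_n}(B_{n-1}) \cong R_n(3)$.

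Since $M, N \in \SBim_{n-1}$ are $R_{n-1}$-biprojective, tensoring the four-term exact sequence $0 \to \ker \xi \to B_{n-1} \to B_{n-1}(2) \to \operatorname{coker} \xi \to 0$ with $M \otimes_{R_{n-1}} (-) \otimes_{R_{n-1}} N$ preserves exactness, and the identification $M \otimes_{R_{n-1}} R_n \otimes_{R_{n-1}} N \cong (M \otimes_{R_{n-1}} N) \sqcup \one_1$ gives $\HH^{0, x_n}(Y) \cong (M \otimes_{R_{n-1}} N) \sqcup \one_1(-1)$ and $\HH^{1, x_n}(Y) \cong (M \otimes_{R_{n-1}} N) \sqcup \one_1(3)$. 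Applying $\HH^k_{R_{n-1}}$---which commutes with $\sqcup \one_1$---and combining the $j = 0$ and $j = 1$ contributions yields \eqref{eq:interestingMarkovUpper}. The main obstacle is the explicit computation of $\HH^{*, x_n}(B_{n-1})$, particularly the correct tracking of grading shifts and verification of trivial bimodule structure on $\ker \xi$ and $\operatorname{coker} \xi$; a secondary technical point is confirming that the two-stage Künneth spectral sequence $E_2^{ij} = \HH^i_{R_{n-1}}(\HH^{j, x_n}(Y)) \Rightarrow \HH^{i+j}_{R_n}(Y)$ degenerates at $E_2$ with no nontrivial extensions, which should follow from the split tensor structure $R_n = R_{n-1} \otimes_\k \k[x_n]$.
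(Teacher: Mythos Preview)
Your reduction of \eqref{eq:trivialMarkovLower} and \eqref{eq:interestingMarkovLower} to \eqref{eq:trivialMarkovUpper} and \eqref{eq:interestingMarkovUpper} via Lemma~\ref{lemma:H homology is cohomology} is exactly what the paper does. For the cohomological statements themselves the paper gives no argument at all---it simply cites \cite{Kh07} and \cite[Proposition 3.10]{Hog18-GT}---so your explicit Koszul/K\"unneth computation is not so much a different route as a filling-in of what the paper leaves to the literature.

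Your computation is correct, including the identification of $\ker\xi$ and $\operatorname{coker}\xi$ on $B_{n-1}$ with trivial $R_n$-bimodules of the stated degrees. The one point you flag---degeneration of the two-row spectral sequence and absence of extensions---does require a word beyond ``split tensor structure,'' since $Y$ is not a tensor product over $\k$ of an $R_{n-1}$-bimodule and a $\k[x_n]$-bimodule. A clean way to settle it: the image of $\xi$ on $B_{n-1}$ is the standard twisted bimodule $R_{s_{n-1}}$, and as an $R_{n-1}$-bimodule this is \emph{free} over the quotient $R_{n-2}[x_{n-1},x_{n-1}']$ of $R_{n-1}^e$ through which the action factors (the left $x_{n-1}$ acts as $x_n$, the right as $x_{n-1}$, and these are independent variables). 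Hence the short exact sequence $0\to\ker\xi\to B_{n-1}\to\operatorname{im}\xi\to 0$ splits $R_{n-1}$-bilinearly, and tensoring the splitting with $M\otimes_{R_{n-1}}(-)\otimes_{R_{n-1}}N$ gives a splitting for $Y$. This yields $[Y\to Y(2)]\simeq \ker\xi\oplus[\operatorname{im}\xi\hookrightarrow Y(2)]$ in $\KC^b(R_{n-1}\text{-bimod})$, and the second summand is quasi-isomorphic to $\operatorname{coker}\xi[-1]$, so the hypercohomology splits as claimed.
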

Here $M\sqcup \one_1 = M[x_n]$ is the induced $R$-bimodule.
\begin{proof}
Standard, see \cite{Kh07} and also Proposition 3.10 in \cite{Hog18-GT}. Note that \eqref{eq:trivialMarkovLower} and \eqref{eq:interestingMarkovLower} follow from \eqref{eq:trivialMarkovUpper} and \eqref{eq:interestingMarkovUpper} using the isomorphism $\HH_k(M)\cong \HH^{n-k}(M)(-2n)$.
\end{proof}

\begin{corollary}\label{co:HHfree}
For each $1\leq k\leq n$ and each $B\in \SBim_n$, the Hochschild cohomology $\HH^k(B)$ is a free $R$-module of finite rank.
\end{corollary}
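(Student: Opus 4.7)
The plan is to prove freeness of $\HH^k(B)$ for all $B \in \SBim_n$ and all $k \geq 0$ by induction on $n$, invoking Lemma \ref{lemma:HH markovs}. I strengthen the statement to include $k = 0$ in order to close the induction at the boundary case $k = 1$. Since $\HH^k$ is additive and every Soergel bimodule decomposes, up to shifts, as a direct sum of indecomposables $B_w$ with $w \in S_n$, it suffices to prove freeness of $\HH^k(B_w)$ for each $w \in S_n$.

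The key observation is that, by the parabolic decomposition $S_n = W^I \cdot S_{n-1}$ with $W^I = \{e,\, s_{n-1},\, s_{n-1}s_{n-2},\, \ldots,\, s_{n-1}\cdots s_1\}$, every $w \in S_n$ admits a reduced expression in which $s_{n-1}$ occurs at most once: write $w = v u$ with $v \in W^I$ and $u \in S_{n-1}$ (so $\ell(w) = \ell(v) + \ell(u)$), and concatenate reduced expressions. The associated Bott-Samelson $B_{\mathrm{BS}}$ is therefore either of the form $M \sqcup \one_1$ (when $s_{n-1}$ does not appear) or of the form $(M \sqcup \one_1) \otimes B_{n-1} \otimes (N \sqcup \one_1)$ (when it appears exactly once), with $M, N \in \SBim_{n-1}$. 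In either case \eqref{eq:trivialMarkovUpper} or \eqref{eq:interestingMarkovUpper} applies and expresses $\HH^k(B_{\mathrm{BS}})$ as a direct sum of shifted copies of $\HH^j(X) \sqcup \one_1$ for $X \in \SBim_{n-1}$ and $j \in \{k-1, k\}$. By the inductive hypothesis on $n$, each such $\HH^j(X)$ is a free $R_{n-1}$-module, so $\HH^k(B_{\mathrm{BS}})$ is a free $R_n$-module.

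Since $B_w$ is a summand of $B_{\mathrm{BS}}$ by Soergel's theorem, additivity of $\HH^k$ realizes $\HH^k(B_w)$ as a direct summand of the free module $\HH^k(B_{\mathrm{BS}})$. The graded polynomial ring $R = \k[x_1,\ldots,x_n]$ is graded-local, so any finitely generated graded projective $R$-module is graded free; hence $\HH^k(B_w)$ is a finitely generated free $R$-module, and the same conclusion passes to arbitrary $B \in \SBim_n$.

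The main obstacle I foresee is ensuring the induction closes: the formula \eqref{eq:trivialMarkovUpper} at $k = 1$ requires freeness of $\HH^0(M)$ for $M \in \SBim_{n-1}$, which is precisely why the inductive hypothesis is strengthened to $k \geq 0$. Fortunately, the Markov formulas specialize at $k = 0$ to expressions involving only $\HH^0$ of bimodules in $\SBim_{n-1}$ (since the $\HH^{-1}$ summands vanish), so the same inductive scheme simultaneously handles the $k = 0$ case, anchored by the base case $\HH^0(R) \cong R$.
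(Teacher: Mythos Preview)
Your proof is correct and follows essentially the same strategy as the paper: induction on $n$ via the Markov moves of Lemma~\ref{lemma:HH markovs}, reducing to Bott-Samelson bimodules in which the index $n-1$ appears at most once, and then invoking that graded summands of free finite-rank $R$-modules are free.

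The one notable difference is in how you reach a Bott-Samelson with $s_{n-1}$ appearing at most once. The paper starts from an arbitrary Bott-Samelson and uses the relations \eqref{eq: sts} and \eqref{eq: ss} to rewrite it (up to direct summands) as one with this property; you instead go straight to the indecomposable $B_w$, choose a reduced expression for $w$ via the coset decomposition $S_n = W^I \cdot S_{n-1}$ so that $s_{n-1}$ already occurs at most once, and take the Bott-Samelson of that. Your route is slightly more economical in that it bypasses the bimodule relations entirely. You also make explicit the need to carry the $k=0$ case through the induction (so that the $\HH^{k-1}$ term at $k=1$ is covered), a point the paper leaves implicit.
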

\begin{proof}
Since summands of free graded finite rank $R$-modules are free of finite rank, it suffices to prove in the case when $B=B_{i_1}\otimes\cdots\otimes B_{i_r}$ is a Bott-Samelson bimodule.  We have
\begin{equation}\label{eq: sts}
(B_i\otimes B_{i+1} \otimes B_i) \oplus B_{i+1} \cong (B_{i+1}\otimes B_{i}\otimes B_{i+1})\oplus B_{i},
\end{equation}
\begin{equation}\label{eq: ss}
B_i^{\otimes 2} \cong B_i(1)\oplus B_i(-1),
\end{equation}
so a straightforward induction allows us to reduce to the case when the index $n-1$ appears at most once among the indices $i_j$.  Applying \eqref{eq:trivialMarkovUpper} or \eqref{eq:interestingMarkovUpper} we reduce to the statement for $n-1$.
\end{proof}
Thus, we may view $\HH^k$ as an endofunctor of $\SBim_n$: the input is an arbitrary Soergel bimodule and the output is a direct sum of finitely many copies of $\one$ with shifts.

\subsection{Duals}
\label{subsec:duals}
The category $\SBim_n$ has a contravariant functor $(-)^\vee:\SBim_n\rightarrow \SBim_n$ so that $B^\vee$ is the two-sided dual (or biadjoint) to $B$.  This functor satisfies $B_i^\vee = B_i$ for all $i$ and $(M\otimes N)^\vee\cong N^\vee\otimes M^\vee$.  The duality functor comes from the observation that each bimodule $B_1,\ldots,B_{n-1}$ is a Frobenius algebra object in $\SBim_n$.  Precisely, there are canonical chain maps
\[
B_i\otimes B_i\rightarrow B_i(-1),\qquad B_i(-1)\rightarrow R,\qquad R\rightarrow B(1),\qquad B_i(1) \rightarrow B_i\otimes B_i.
\]
The first and and third of these maps give $B_i(-1)$ the structure of an algebra object, and the second and fourth maps give $B_i(1)$ the structure of a coalgebra object.  Moreover, the composition of the first two  defines a map $B_i \otimes B_i\rightarrow R$, the composition of the last two defines a map $R\rightarrow B_i\otimes B_i$, and these maps realize the fact that $B_i$ is self-dual.   In general we have natural isomorphisms:
\begin{subequations}
\begin{equation}\label{duality for SBim}
\Hom(A,B)\cong \Hom(R,B\otimes A^\vee) \cong \Hom(A\otimes B^\vee,R),
\end{equation}
\begin{equation}\label{duality for SBim right}
\Hom(A,B)\cong  \Hom(R,A^\vee \otimes B)\cong \Hom(B^\vee\otimes A,R) .
\end{equation}
\end{subequations}


\begin{remark}
\label{rem:duality with R action}
Recall that $\Hom(A,B)$ is a graded $R$-bimodule.  The isomorphisms \eqref{duality for SBim} are isomorphisms of graded left $R$-modules, while \eqref{duality for SBim right} are isomorphisms of graded right $R$-modules.  In fact we can say more; for instance the right action on $\Hom(A,B)$ can be understood as corresponding to the $R$-action on $\Hom(R,B\otimes A^\vee)$ via ``middle multiplication'' on $B\otimes A^\vee$. 
\end{remark}

\begin{remark}
We can also consider the duality isomorphisms for complexes.  For each $A,B\in \KC^b(\SBim_n)$ we have
$$
\Homc(A,B) \cong \Homc(\one,  B\otimes A^{\vee}) \cong \Homc(A\otimes B^{\vee},\one)
$$
as complexes of $R$-modules (with the left $R$-action on $\Hom(A,B)$), and
\[
\Homc(A,B) \cong \Homc(\one,A^\vee\otimes B)\cong \Homc(B^\vee\otimes A,\one)
\]
as complexes of $R$-modules (with the right $R$-action on $\Homc(A,B)$).
\end{remark}

It can be useful to phrase this categorical duality in terms of the usual duality in the category of $R$-modules.  If $M$ is a graded $R$-module, we let $M^\star:=\Hom^\Z_R(M,R)$ denote the graded $R$-module of homs.  There is a natural map $M\to (M^{\star})^{\star}$, which is an isomorphism if $M$ is free and finitely generated.

If $B$ is an $R,R$-bimodule, then we can forget the left action, obtaining a dual bimodule $B^\star:=\Hom_{\k\otimes R}(B,R)$, or we can forget the right $R$-action, obtaining a dual bimodule ${}^\star B:=\Hom_{R\otimes \k}(B,R)$
\begin{lemma}\label{lemma:twodualities}
We have natural isomorphisms
\[
B^\star \cong \ B^\vee \ \cong \ {}^\star B
\]
for $B\in \SBim_n$.
\end{lemma}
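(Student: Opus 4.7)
The plan is to show that the three functors $(-)^\vee$, $(-)^\star$, and ${}^\star(-)$ agree on $\SBim_n$ by constructing natural comparison maps, verifying they are isomorphisms on the generators $R, B_1, \ldots, B_{n-1}$, and then extending to all of $\SBim_n$ via contravariant monoidality and closure under direct summands.

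First, I would construct the natural comparison maps using the Frobenius structure recalled just above the lemma. The evaluation $\mathrm{ev}_B \colon B^\vee \otimes_R B \to R$, part of the rigid structure on $\SBim_n$, sends $\phi \in B^\vee$ to the right $R$-linear functional $b \mapsto \mathrm{ev}_B(\phi \otimes b)$, yielding a natural bimodule map $B^\vee \to B^\star$. The opposite-side evaluation $B \otimes_R B^\vee \to R$ produces a symmetric map $B^\vee \to {}^\star B$. On the generator $R$ these are the identity; on $B_i$, freeness of $B_i$ as a right $R$-module (of rank two, with an explicit basis such as $\{1 \otimes 1,\, x_i \otimes 1\}$) combined with non-degeneracy of the Frobenius pairing makes the comparison an isomorphism by direct inspection. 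The analogous statement for ${}^\star B_i$ is symmetric.

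Next I would propagate these isomorphisms from generators to all Bott-Samelson bimodules using monoidality. Contravariant monoidality of $(-)^\vee$ is automatic from the rigid structure. For $(-)^\star$, the standard tensor-hom adjunction provides a natural comparison $N^\star \otimes_R M^\star \to (M \otimes_R N)^\star$ of $R$-bimodules, which is an isomorphism whenever $M$ is finitely generated and free as a right $R$-module. By Soergel's theory every object of $\SBim_n$ is free of finite rank on each side, so this hypothesis always holds, and monoidality follows; the case of ${}^\star(-)$ is symmetric. Additivity of all three functors then extends the isomorphism from Bott-Samelson bimodules to their summands, exhausting $\SBim_n$.

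The main obstacle will be the careful bookkeeping of the $R$-bimodule structure. The tensor-hom adjunction naturally produces isomorphisms of one-sided $R$-modules, and one must verify that the Frobenius evaluations are sufficiently symmetric to upgrade these to bimodule maps compatible with both the left and right $R$-actions; this is the one place where a genuine verification (as opposed to a formal invocation of monoidality and additivity) is needed.
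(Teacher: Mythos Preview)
Your approach is correct but takes a different route from the paper. You construct the comparison map $B^\vee \to B^\star$ via the evaluation pairing (this is essentially the paper's map $\Psi$), then show it is an isomorphism by checking on the generators $R, B_i$ and propagating through Bott--Samelson bimodules via contravariant monoidality, then to summands. The paper instead constructs an explicit inverse $\Phi: B^\star \to B^\vee$ directly from the coevaluation $R \to B \otimes_R B^\vee$ of the rigid structure: given $f \in B^\star$, the map $\Phi(f)$ is the composite $R \to B \otimes_R B^\vee \xrightarrow{f \otimes \Id} R \otimes_R B^\vee \cong B^\vee$. That $\Phi$ and $\Psi$ are mutually inverse is then an immediate consequence of the zig--zag identities for the duality data, and holds uniformly for every $B$ in $\SBim_n$ without any appeal to generators, monoidality of $(-)^\star$, or passage to summands.

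The advantage of the paper's argument is economy: no case analysis, no compatibility diagrams between the monoidal structures on $(-)^\vee$ and $(-)^\star$, and the bimodule compatibility falls out of the same zig--zag verification. Your approach has the virtue of being very concrete on the generators, but the ``main obstacle'' you flag---checking that the comparison is a bimodule map and that it intertwines the two monoidal structures---is genuine extra work that the paper's argument sidesteps entirely.
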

\begin{proof}
We will define inverse isomorphisms $\Phi:B^\star \leftrightarrow B^\vee\cong \Hom_{\Z\otimes R}(R, B^\vee) :\Psi$.  Let $f:B\rightarrow R$ be a morphism of graded right $R$-modules.  Define $\Phi(f)$ to be the composition
\[
R \rightarrow B\otimes_R B^\vee \rightarrow R\otimes_R B^{\vee} \cong B^{\vee},
\]
where the first map is given by duality and the second is $f\otimes \Id$.  

In the other direction, let $g:R\rightarrow B^\vee$ be a morphism of graded right $R$-modules, and define $\Psi$ to be the composition
\[
B\cong R\otimes_R B \rightarrow B^\vee\otimes_R B \rightarrow R,
\]
where the second map is $g\otimes \Id_{B^\vee}$ and the last map is given by duality.  It is an easy exercise to show that $\Psi$ and $\Phi$ are inverse isomorphisms of graded bimodules $B^\star \cong \Hom_{\Z\otimes R}(R, B^\vee)$; they are clearly natural in $B$.

The proof that $B^\vee\cong {}^\star B$ naturally is similar.
\end{proof}

\begin{notation}
Henceforth, if $M$ is a finitely generated free $R$-module, then $M$ will be regarded as an object of $\SBim_n$, and $M^\star$ will be denoted by $M^\vee$.
\end{notation}


The following is standard.
\begin{lemma}\label{lemma:free homs}
For all $M,N\in \SBim_n$ the hom bimodule $\Hom(M,N)$ is free as a left or right $R$-module.
\end{lemma}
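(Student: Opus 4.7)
The plan is to reduce the statement to Corollary \ref{co:HHfree} via the two duality isomorphisms recorded in Remark \ref{rem:duality with R action}. First, I would specialize \eqref{duality for SBim} to $A=M$, $B=N$ to obtain a natural isomorphism of graded left $R$-modules
\[
\Hom(M,N) \cong \Hom(R, N \otimes M^\vee) = \HH^0(N \otimes M^\vee),
\]
and similarly use \eqref{duality for SBim right} to get a natural isomorphism of graded right $R$-modules
\[
\Hom(M,N) \cong \Hom(R, M^\vee \otimes N) = \HH^0(M^\vee \otimes N).
\]

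Since $\SBim_n$ is closed under $(-)^\vee$ and $\otimes$ by construction, the objects $N \otimes M^\vee$ and $M^\vee \otimes N$ both lie in $\SBim_n$. I would then invoke Corollary \ref{co:HHfree} to conclude that $\HH^0$ of each of these is a free $R$-module of finite rank, and transport this freeness back along the displayed isomorphisms. The only small caveat is that Corollary \ref{co:HHfree} as stated covers $1 \leq k \leq n$; however, its inductive proof via the Markov moves \eqref{eq:trivialMarkovUpper} and \eqref{eq:interestingMarkovUpper} goes through verbatim for $k=0$, the $\HH^{-1}$ summands simply vanishing at each stage. Recall also that the left and right $R$-actions on $\HH^0$ of any bimodule coincide, so the resulting freeness is unambiguous.

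There is essentially no hard step — the lemma is a clean bookkeeping consequence of the biadjoint duality structure on $\SBim_n$ combined with the freeness of $\HH^0$ of a Soergel bimodule. The only point to watch is matching the $R$-actions correctly on each side of the duality isomorphisms: the isomorphism $\Hom(M,N) \cong \HH^0(N \otimes M^\vee)$ must be used to detect freeness over the \emph{left} $R$-action (via ``middle multiplication'' on $N \otimes M^\vee$ as explained in Remark \ref{rem:duality with R action}), while $\Hom(M,N) \cong \HH^0(M^\vee \otimes N)$ handles the right $R$-action.
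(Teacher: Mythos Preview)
Your proposal is correct and follows essentially the same approach as the paper: reduce to $\HH^0$ of a Soergel bimodule via the duality isomorphisms of Remark~\ref{rem:duality with R action}, then invoke Corollary~\ref{co:HHfree}. The paper phrases this more tersely as ``reduce to the case $M=R$,'' but the content is identical, and you are in fact more careful than the paper in flagging that Corollary~\ref{co:HHfree} is stated only for $1\le k\le n$ while the case $k=0$ is what is needed.

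One small correction to your final parenthetical: under the isomorphism $\Hom(M,N)\cong \HH^0(N\otimes M^\vee)$ from \eqref{duality for SBim}, it is the \emph{right} $R$-action on $\Hom(M,N)$ that corresponds to middle multiplication on $N\otimes M^\vee$, not the left one (this is exactly what Remark~\ref{rem:duality with R action} says). The left action on $\Hom(M,N)$ matches the outer (left $=$ right) $R$-action on $\HH^0(N\otimes M^\vee)$, which is what makes your freeness argument go through. This misattribution does not affect the validity of your proof.
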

\begin{proof}
By Remark \ref{rem:duality with R action}, it suffices to prove the lemma in the special case $M=R$; in this case $\Hom(R,N)=\HH^0(N)$ is free by Corollary \ref{co:HHfree}.
\end{proof}

Now we consider how the duality functor interacts with Hochschild (co)homology.

\begin{lemma}
\label{lem: HH mirror}
For each $B\in \SBim_n$ one has $\HH^k(B^\vee)(-2n)=\HH^{n-k}(B)^{\vee}$. 
\end{lemma}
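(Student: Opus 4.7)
The strategy I would pursue is to compute both sides via the Koszul resolution of $R$ as an $R^e$-module and compare. First, using Lemma \ref{lemma:H homology is cohomology} we may rewrite the claim: applying $(-2n)$ and the isomorphism $\HH^k(M)(-2n) \cong \HH_{n-k}(M)$ to the left-hand side, it is enough to prove the unshifted identity
\[
\HH_j(B^\vee) \cong \HH^j(B)^\vee \qquad \text{for every } j\ \text{and every}\ B\in\SBim_n.
\]
The $j=0$ case is visible already from the rigid structure of $\SBim_n$: one has $\HH_0(B) \cong \Hom_{R,R}(B,R)^\vee$ (from \S\ref{subsec:introreform}) and $\Hom_{R,R}(B,R)\cong \Hom_{R,R}(R,B^\vee) = \HH^0(B^\vee)$ by the duality isomorphisms \eqref{duality for SBim}, so the strategy is to upgrade this to higher $j$ through an explicit, Koszul-level duality of complexes.

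Let $V$ be the $\k$-vector space with basis $\theta_1,\ldots,\theta_n$ placed in internal degree $2$, and let $K_\bullet = R^e\otimes_\k \Lambda^\bullet V \twoheadrightarrow R$ be the Koszul resolution with differential induced by $\theta_i \mapsto x_i - x_i'$. Then
\[
\HH^j(B) = H^j\!\bigl(C^\bullet(B)\bigr),\qquad C^\bullet(B) := B\otimes_\k \Lambda^\bullet V^*,
\]
and
\[
\HH_j(B^\vee) = H_j\!\bigl(D_\bullet(B^\vee)\bigr),\qquad D_\bullet(B^\vee) := \Lambda^\bullet V\otimes_\k B^\vee.
\]
The key step is to exhibit a natural isomorphism of chain complexes $D_\bullet(B^\vee) \cong C^\bullet(B)^\vee$, where the latter is the termwise $R$-linear dual. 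At the level of terms this is immediate: since $\Lambda^j V^*$ is a finite-dimensional $\k$-vector space,
\[
\bigl(B\otimes_\k \Lambda^j V^*\bigr)^{\!\vee}\;\cong\; \Hom_R(B,R)\otimes_\k \Lambda^j V\;\cong\; B^\vee\otimes_\k \Lambda^j V,
\]
using Lemma \ref{lemma:twodualities} in the last step. The content is then to verify that the Koszul differential on $D_\bullet(B^\vee)$ matches (up to an overall sign, which is harmless) the dual of the Koszul differential on $C^\bullet(B)$. This amounts to a direct computation relating the $R^e$-action $(x_i - x_i')\cdot\beta$ on $\beta\in B^\vee=\Hom_R(B,R)$ with the $\k$-dual of $b\mapsto (x_i-x_i')b$.

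The last step is to pass from chain-level duality to cohomology-level duality, i.e.\ to deduce $H_j(C^\bullet(B)^\vee) \cong H^j(C^\bullet(B))^\vee$. For a general bounded complex of free $R$-modules this requires vanishing of $\Ext^1_R$ terms; here this is automatic because Lemma \ref{lemma:free homs} guarantees each $C^j(B)=B\otimes_\k \Lambda^j V^*$ is finitely generated free as an $R$-module, and Corollary \ref{co:HHfree} guarantees each cohomology group $\HH^j(B)$ is finitely generated free as well. A standard splitting argument (successively peeling off the top-degree free cohomology using projectivity) shows that $C^\bullet(B)$ is chain-homotopy equivalent to $\HH^\bullet(B)$ viewed as a complex with zero differential, and dualization preserves chain-homotopy equivalences. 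Combining this with the identification $D_\bullet(B^\vee)\cong C^\bullet(B)^\vee$ yields $\HH_j(B^\vee)\cong\HH^j(B)^\vee$, completing the proof after the initial reduction.

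The step I expect to be the main obstacle is the sign/structure verification that the Koszul differentials agree under dualization: one must carefully track how the left and right $R$-actions on $B^\vee$ (inherited via Lemma \ref{lemma:twodualities}) interact with the differential $\theta_i\mapsto x_i-x_i'$. Everything else is either a formal consequence of Koszul self-duality of $K_\bullet$ (which, incidentally, also gives an alternative proof of Lemma \ref{lemma:H homology is cohomology}) or of the freeness results already established in Section \ref{sec:background}.
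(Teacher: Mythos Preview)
Your proposal is correct and follows essentially the same approach as the paper: show that the Koszul complexes computing $\HH_\bullet(B^\vee)$ and $\HH^\bullet(B)$ are termwise $R$-dual, use freeness of the cohomology (Corollary~\ref{co:HHfree}) to pass the duality to cohomology, and then invoke Lemma~\ref{lemma:H homology is cohomology}. The paper compresses your first three paragraphs into the phrase ``it is easy to see from the definition,'' and applies Lemma~\ref{lemma:H homology is cohomology} at the end rather than using it as an initial reduction, but the content is identical.
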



\begin{proof}



It is easy to see from the definition that complexes of $R$-modules computing Hochschild cohomology of $B$ the Hochschild homology of $B^{\vee}$ are dual to each other.  Since the cohomology of both complexes are free over $R$, the cohomologies are dual to each other as well, i.e.~$\HH_k(B^\vee)\cong \HH^k(B)^\vee$. The Lemma now follows from Lemma \ref{lemma:H homology is cohomology}.
\end{proof}

The symmetries between $\HH^k$, $\HH^{n-k}$, $\HH_k$, and $\HH_{n-k}$ are quite a bit more attractive (and easy to remember) after a change in normalization.

\begin{definition}\label{def:normalizedAgrading}
Let $\widetilde{\HH}^k(M):=\HH^k(M)(-2k)$ and $\widetilde{\HH}_{k}(M):=\HH_k(M)(2k)$.
\end{definition}

\begin{proposition}\label{prop:HHsymmetries}
If $k+l=n$, then we have
\[
\widetilde{\HH}^k(M)\cong \widetilde{\HH}_{l}(M) \cong  \widetilde{\HH}^{l}(M^\vee)^\vee.
\]
for all $M\in \SBim_n$.  These are isomorphisms of functors from $\SBim_n$ to the category of finitely generated free graded $R$-modules.
\end{proposition}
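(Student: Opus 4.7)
The plan is to derive both isomorphisms purely by combining the two lemmas already in hand, namely Lemma \ref{lemma:H homology is cohomology} (swapping $\HH^k$ and $\HH_{n-k}$ at the cost of a degree shift by $-2n$) and Lemma \ref{lem: HH mirror} (which relates $\HH^k$ applied to $M^\vee$ to $\HH^{n-k}$ applied to $M$, again with a shift). The whole proposition is essentially a bookkeeping exercise to check that the normalization in Definition \ref{def:normalizedAgrading} has been chosen so that the shifts cancel symmetrically.

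For the first isomorphism, assuming $k+l = n$, I would compute
\[
\widetilde{\HH}_l(M) = \HH_l(M)(2l) \cong \HH^{n-l}(M)(-2n)(2l) = \HH^k(M)(-2k) = \widetilde{\HH}^k(M),
\]
using Lemma \ref{lemma:H homology is cohomology} in the middle step and $n-l=k$ in the shift. For the second isomorphism, I would unpack
\[
\widetilde{\HH}^l(M^\vee)^\vee = \bigl(\HH^l(M^\vee)(-2l)\bigr)^\vee,
\]
apply Lemma \ref{lem: HH mirror} in the form $\HH^l(M^\vee) \cong \HH^{n-l}(M)^\vee(2n) = \HH^k(M)^\vee(2n)$, and collect shifts to obtain $\HH^k(M)^\vee(2k)$; dualizing once more turns the degree shift around and the double dual around, yielding $\HH^k(M)(-2k) = \widetilde{\HH}^k(M)$. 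Here I use that each $\HH^k(M)$ is a finitely generated free $R$-module by Corollary \ref{co:HHfree}, so the canonical map $\HH^k(M) \to (\HH^k(M)^\vee)^\vee$ is an isomorphism; without this, dualizing twice would not return us to the original module.

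For the final sentence of the proposition, I would remark that both Lemma \ref{lemma:H homology is cohomology} and Lemma \ref{lem: HH mirror} are natural in $M$ (the first coming from self-duality of the Koszul resolution, the second from the definitions of $(-)^\vee$ and the Hochschild complexes), and that Corollary \ref{co:HHfree} guarantees the target of each $\widetilde{\HH}^k$ and $\widetilde{\HH}_l$ lies in finitely generated free graded $R$-modules, so the isomorphisms are indeed isomorphisms of functors landing in that subcategory.

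There is essentially no obstacle here beyond shift-tracking; the only mild subtlety is making sure the biduality isomorphism used in the second step really is an isomorphism, which is exactly what Corollary \ref{co:HHfree} provides. I would therefore keep the proof short, displaying the two chains of shifts above and citing the three prior results.
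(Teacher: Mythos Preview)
Your proposal is correct and takes exactly the same approach as the paper, which simply states that the proposition is a restatement of Lemma~\ref{lemma:H homology is cohomology} and Lemma~\ref{lem: HH mirror}. You have merely spelled out the shift bookkeeping and the biduality step (via Corollary~\ref{co:HHfree}) that the paper leaves implicit.
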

\begin{proof}
This is just a restatement of Lemma \ref{lemma:H homology is cohomology} and Lemma \ref{lem: HH mirror}.
\end{proof}

\begin{remark}
When expressing the Poincar\'e series of Khovanov-Rozansky homology, the variable $a=AQ^{-2}$ is often used instead of $A$. This precisely corresponds to replacing $\HH^k$ by $\widetilde{\HH}^k$. (Here, $Q$ denotes the degree in Soergel bimodules and $A$ denotes the usual Hochschild degree.)
\end{remark}

\begin{corollary}\label{cor:HH_0 as hom}
We have $\HH_0(M)\cong \Hom(M,R)^\vee$, natural for $M\in \SBim_n$.
\end{corollary}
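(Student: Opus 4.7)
The plan is to derive this corollary by combining the symmetries of Hochschild (co)homology already established in Proposition \ref{prop:HHsymmetries} with the biadjunction duality formula \eqref{duality for SBim}.

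First I would apply Proposition \ref{prop:HHsymmetries} in the case $k=0$, $l=n$. Since $\widetilde{\HH}_0(M)=\HH_0(M)(0)=\HH_0(M)$ and $\widetilde{\HH}^0(M^\vee)=\HH^0(M^\vee)$ (the normalization shifts are trivial when the Hochschild degree is zero), the chain of isomorphisms in that proposition collapses to
\[
\HH_0(M)\ \cong\ \HH^0(M^\vee)^\vee,
\]
natural in $M$.

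Next I would rewrite $\HH^0(M^\vee)$ as a hom space: by \eqref{eq:HH^0 as hom}, $\HH^0(M^\vee)\cong \Hom(R,M^\vee)$. Now I would apply the biadjunction isomorphism \eqref{duality for SBim} with $A=M$ and $B=R$, which gives
\[
\Hom(M,R)\ \cong\ \Hom(R,\,R\otimes M^\vee)\ \cong\ \Hom(R,M^\vee),
\]
using that $R$ is the monoidal unit. Dualizing and composing the three natural isomorphisms yields
\[
\HH_0(M)\ \cong\ \HH^0(M^\vee)^\vee\ \cong\ \Hom(R,M^\vee)^\vee\ \cong\ \Hom(M,R)^\vee,
\]
which is the claim. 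Naturality in $M$ follows from the naturality of each ingredient.

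There is essentially no obstacle here, as the hard work has already been done in Proposition \ref{prop:HHsymmetries} (itself a consequence of Koszul self-duality via Lemma \ref{lemma:H homology is cohomology} and the matching of duals under $(-)^\vee$ via Lemma \ref{lem: HH mirror}). The only small point to check is that the $R$-module structure appearing on $\Hom(M,R)^\vee$ matches the $R$-bimodule structure on $\HH_0(M)$ (where left and right actions agree); this follows from Remark \ref{rem:duality with R action} together with the fact that the duality isomorphism $B^\star\cong B^\vee$ of Lemma \ref{lemma:twodualities} is a bimodule isomorphism.
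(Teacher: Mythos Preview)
Your argument is correct and is essentially the same as the paper's proof: both compose the duality isomorphism $\Hom(M,R)\cong\Hom(R,M^\vee)$ from \eqref{duality for SBim}, the identification $\Hom(R,M^\vee)=\HH^0(M^\vee)$ from \eqref{eq:HH^0 as hom}, and the symmetry $\HH^0(M^\vee)^\vee\cong\HH_0(M)$ from Proposition~\ref{prop:HHsymmetries}, just in opposite orders. One small slip: when invoking Proposition~\ref{prop:HHsymmetries} you should take $k=n$, $l=0$ (not $k=0$, $l=n$), since the isomorphism $\widetilde{\HH}_{l}(M)\cong\widetilde{\HH}^{l}(M^\vee)^\vee$ with $l=0$ is what yields $\HH_0(M)\cong\HH^0(M^\vee)^\vee$.
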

\begin{proof}
Indeed
\[
\Hom(M,R)^\vee\cong \Hom(R,M^\vee)^\vee \cong \HH^0(M^\vee)^\vee\cong \HH_0(M).
\]
Each of these isomorphisms is functorial in $M$.  
\end{proof}

\subsection{Rouquier complexes}
\label{subsec:rouquier}

Let $\KC^b(\SBim_n)$ denote the homotopy category of bounded complexes of Soergel bimodules, with differentials of degree $+1$. 
It inherits all the structures from $\SBim_n$: it is an additive tensor category with (two-sided) duals and shifts $(\pm 1)$.
It is also triangulated, with cohomological shift functors $[\pm 1]$.

We will need special complexes
$$
\rouq_i \ \ := \ \ 0\rightarrow \underline{B_i}\rightarrow {R}(1)\rightarrow 0,
$$
$$
\rouq_i^{-1}\ \ :=\ \ 0 \rightarrow R(-1)\rightarrow \underline{B_i}\rightarrow 0,
$$
where we have underlined terms in degree zero.  Note that $\rouq_i=\Cone(b_i)[-1]$ and $\rouq_i\inv = \Cone(b_i^*)$ for some distinguished morphisms $b:B_i\to R(1)$ and $b^*:R(-1)\to B_i$ which are adjoint to each other. 

\begin{theorem}[\cite{Rouquier}]
The complexes $\rouq_i$ and $\rouq_i^{-1}$ satisfy the braid relations up to canonical homotopy equivalence:
\begin{enumerate}
\item $\rouq_i\otimes \rouq_{i+1}\otimes \rouq_i\ \simeq\  \rouq_{i+1}\otimes \rouq_i\otimes \rouq_{i+1}$,
\item $ \rouq_{i}\otimes \rouq_{i}^{-1} \ \simeq \  \one\ \simeq \ \rouq_i^{-1}\otimes \rouq_i$,
\item $\rouq_i\otimes \rouq_j \ \simeq \ \rouq_j\otimes \rouq_i$ if $|i-j|\ge 2$.
\end{enumerate}
\end{theorem}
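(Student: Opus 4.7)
The plan is to verify the three relations by unwinding each side into an explicit total complex of Bott–Samelson bimodules and then producing a homotopy equivalence using only the Frobenius data on $B_i$ and the two basic decompositions
\[
B_i\otimes B_i\cong B_i(1)\oplus B_i(-1),\qquad (B_i\otimes B_{i+1}\otimes B_i)\oplus B_{i+1}\cong (B_{i+1}\otimes B_i\otimes B_{i+1})\oplus B_i
\]
of \eqref{eq: ss} and \eqref{eq: sts}. The equivalences are ``canonical'' in the sense that they are constructed from the distinguished morphisms $b_i,b_i^*$ and the natural projections/inclusions produced by these decompositions, with no auxiliary choices.

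For (3), when $|i-j|\geq 2$ the bimodules $B_i$ and $B_j$ are induced from commuting parabolic subalgebras of $R$, so there is an obvious shuffle isomorphism $\tau_{ij}:B_i\otimes B_j\overset{\sim}{\to} B_j\otimes B_i$ in $\SBim_n$. Applying $\tau_{ij}$ termwise, together with the identity on all $R(\pm 1)$-factors, yields a termwise isomorphism $\rouq_i\otimes \rouq_j\cong \rouq_j\otimes \rouq_i$; compatibility with the differentials reduces to the naturality identities $\tau_{ij}\circ(b_i\otimes 1)=(1\otimes b_i)\circ\tau_{ij}$ and its variant for $b_i^*$, which are immediate from the fact that $b_i,b_i^*$ involve only the variables $x_i,x_{i+1}$.

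For (2), I would write out $\rouq_i\otimes \rouq_i\inv$ as the three-term complex
\[
B_i(-1)\ \longrightarrow\ (B_i\otimes B_i)\oplus R\ \longrightarrow\ B_i(1)
\]
with middle term in cohomological degree $0$, then use \eqref{eq: ss} to decompose $B_i\otimes B_i\cong B_i(1)\oplus B_i(-1)$. A direct computation with the Frobenius (co)multiplication shows that the components of $b_i\otimes 1$ and $1\otimes b_i^*$ hitting $B_i(\pm 1)$ are isomorphisms, so Gaussian elimination cancels the two acyclic pairs $B_i(-1)\leftrightarrow B_i(-1)$ and $B_i(1)\leftrightarrow B_i(1)$, leaving $R$ in degree zero. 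The embedding $\one\hookrightarrow \rouq_i\otimes \rouq_i\inv$ and projection $\rouq_i\otimes \rouq_i\inv\twoheadrightarrow \one$ extracted from this reduction are the desired canonical (co)unit data; the relation $\rouq_i\inv\otimes \rouq_i\simeq \one$ is symmetric.

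The main obstacle is the braid relation (1). Both $\rouq_i\otimes \rouq_{i+1}\otimes \rouq_i$ and $\rouq_{i+1}\otimes \rouq_i\otimes \rouq_{i+1}$ expand as total complexes with $8$ terms, indexed by $\{0,1\}^3$ and involving triple tensor products of various $B_j$'s (and contracted $R$-factors with appropriate shifts). The key input is \eqref{eq: sts}: both triple Bott–Samelson bimodules contain a common top summand $B_{s_is_{i+1}s_i}=B_{s_{i+1}s_is_{i+1}}$, and the residual summands are $B_i$ or $B_{i+1}$. I would use the canonical projections onto this common summand, together with the Frobenius maps, to produce explicit chain maps in both directions, and then perform a sequence of Gaussian eliminations to cancel the excess $B_i$-, $B_{i+1}$-, and $R$-summands on each side. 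Both complexes reduce to the same minimal model supported on $B_{s_is_{i+1}s_i}$, and the resulting equivalence depends only on the Frobenius data. The delicate part is combinatorial: one must track signs arising from the triple tensor product of cones, verify that the chain-map identities hold on the nose before passing to homotopy, and check that the Gaussian eliminations can be performed in a natural order so that the final equivalence is truly canonical.
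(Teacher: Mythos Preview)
The paper does not give a proof of this theorem; it is simply cited as a result of Rouquier and used as input. So there is no ``paper's own proof'' to compare against.

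Your outline for (2) and (3) is correct and is essentially how these relations are verified in practice. For (1), however, your sketch contains a misleading claim: the minimal model of $\rouq_i\otimes \rouq_{i+1}\otimes \rouq_i$ is \emph{not} concentrated on $B_{s_is_{i+1}s_i}$. After Gaussian elimination one obtains a four-term complex whose degree-zero chain bimodule is $B_{s_is_{i+1}s_i}$, but the remaining degrees contain the other indecomposables $B_{s_is_{i+1}}$, $B_{s_{i+1}s_i}$, $B_{s_i}$, $B_{s_{i+1}}$, $R$ (with shifts). So reducing both sides to a minimal complex only shows that the chain bimodules match; you still have to identify the differentials, and this is exactly the substantive step. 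In practice one does not argue by comparing minimal models but rather constructs an explicit chain map $\rouq_i\otimes \rouq_{i+1}\otimes \rouq_i\to \rouq_{i+1}\otimes \rouq_i\otimes \rouq_{i+1}$ (using the isomorphism \eqref{eq: sts} in degree zero and extending) and checks directly that its cone is contractible, or appeals to a uniqueness statement for such complexes. Your plan as written stops just short of this.

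One further remark: to perform the Gaussian eliminations in (1) you implicitly use $B_iB_{i+1}B_i\cong B_{s_is_{i+1}s_i}\oplus B_i$, which is stronger than the symmetric relation \eqref{eq: sts} and requires the hypothesis on $\k$.
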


As a corollary, for any braid $\beta$ expressed as a product of the standard generators $\sigma_i^\pm$ we can define a complex $F(\beta)$ as a tensor product of complexes of the form $\rouq_i^{\pm 1}$; the resulting complex depends only on $\beta$ and not the expression as a product of generators, up to coherent homotopy equivalence.  Since $\rouq_i^\vee\simeq \rouq_i^{-1}$, we have
$$
F(\beta)^{\vee}\simeq F(\beta^{-1})\simeq F(\beta)^{-1}
$$
for all $\beta$.

\begin{definition}\label{def:hull}
Given a collection of complexes $X_1,\ldots,X_r\in\KC^b(\SBim_n)$, define the \emph{graded triangulated hull} $\ip{X_1,\ldots,X_r}$  to be the smallest full triangulated subcategory of $\KC^b(\SBim_n)$ containing $X_1,\ldots,X_r$ and closed under grading shifts.   We also refer to $\ip{X_1,\ldots,X_r}$ as the \emph{span} of $\{X_i\}$.
\end{definition}

\begin{proposition}\label{prop:rouquier generation}
For each $w\in S_n$ let $F_w$ and $F_w\inv$ denote the Rouquier complexes associated to the positive braid lift of a chosen reduced expression for $w$.  Then $\{F_w\}_{w\in S_n}$ and $\{F_w\inv\}_{w\in S_n}$ span $\KC^b(\SBim_n)$.
\end{proposition}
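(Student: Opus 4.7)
The plan is to show, by induction on the Bruhat length $\ell(w)$, that every indecomposable Soergel bimodule $B_w$ lies in $\ip{F_v : v\in S_n}$. Granting this, the proposition for $\{F_w\}$ follows: by Soergel's theorem every object of $\SBim_n$ decomposes into shifts of the $B_w$'s and so lies in the hull, and $\KC^b(\SBim_n)$ is generated as a triangulated category by $\SBim_n$ concentrated in cohomological degree $0$ via iterated cones along the differentials of a complex.

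The base case $w=e$ is immediate since $B_e=R=F_e$. For the inductive step, fix a reduced expression $w=s_{i_1}\cdots s_{i_\ell}$ and consider $F_w=F_{i_1}\otimes\cdots\otimes F_{i_\ell}$. Expanding termwise using $F_i=(B_i\to R(1))$, the complex $F_w$ is supported in cohomological degrees $0,\ldots,\ell$: its degree-$0$ term is the Bott-Samelson $B:=B_{i_1}\otimes\cdots\otimes B_{i_\ell}$, and its term in degree $k\geq 1$ is a direct sum, indexed by $k$-element subsets $S\subset\{1,\ldots,\ell\}$, of Bott-Samelsons of length $\ell-k$ obtained by replacing each factor indexed by $S$ with $R(1)$. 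In particular every term in cohomological degree $\geq 1$ is a \emph{strictly shorter} Bott-Samelson, which by Soergel's theorem decomposes as a sum of shifts of $B_v$ with $\ell(v)<\ell$; by the inductive hypothesis all such $B_v$ lie in the hull. Hence the stupid truncation $\sigma_{\geq 1}F_w$ lies in $\ip{F_v}$, being built from its constituent terms by iterated cones. The termwise-split short exact sequence
$$
0 \;\longrightarrow\; \sigma_{\geq 1}F_w \;\longrightarrow\; F_w \;\longrightarrow\; B \;\longrightarrow\; 0
$$
yields a distinguished triangle in $\KC^b(\SBim_n)$, from which we conclude that $B$ — and therefore its direct summand $B_w$, singled out by Soergel's theorem — lies in $\ip{F_v}$, completing the induction.

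The assertion for $\{F_w^{-1}\}$ then follows by applying the contravariant duality autoequivalence $(-)^\vee$ of $\KC^b(\SBim_n)$, which is triangulated and sends $F_w$ to $F_w^{-1}$. The one subtle point is the final passage from $B$ to its summand $B_w$, which tacitly uses that the graded triangulated hull is closed under direct summands; this is automatic in our setting because $\KC^b(\SBim_n)$ is idempotent complete and Krull-Schmidt. I expect this bookkeeping — distinguishing carefully between the bare triangulated hull and its idempotent completion, and verifying that every higher cohomological term of $F_w$ really is a strictly shorter Bott-Samelson — to be the main (mild) obstacle; the structural induction itself is quite clean.
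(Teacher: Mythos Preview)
Your overall strategy is sound and close to the paper's, but there is a genuine gap in the final step. You pass from the Bott--Samelson $B$ to its summand $B_w$ by asserting that the graded triangulated hull is closed under direct summands, and you justify this by saying ``$\KC^b(\SBim_n)$ is idempotent complete and Krull--Schmidt.'' That does not follow: a full triangulated subcategory of an idempotent complete (even Krull--Schmidt) triangulated category need not itself be thick. Definition~\ref{def:hull} requires only closure under triangles and grading shifts, not retracts.

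The fix is already implicit in your setup. By Soergel's theorem, $B \cong B_w \oplus C$ where $C$ is a finite direct sum of shifts of $B_v$ with $\ell(v)<\ell$; by your own induction hypothesis $C$ lies in $\ip{F_v}$. Since the inclusion $C\hookrightarrow B$ has cone isomorphic to $B_w$ (the complex $C\to C\oplus B_w$ with the inclusion is homotopy equivalent to $B_w$), you get $B_w\in\ip{F_v}$ from the triangle $C\to B\to B_w\to C[1]$, with no appeal to thickness. So replace the ``automatic'' sentence with this one-line cone argument and the proof goes through.

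For comparison, the paper organizes the same ingredients differently: it first shows every Bott--Samelson (reduced or not) lies in the span of \emph{arbitrary} positive-braid Rouquier complexes (via $B_s\simeq\Cone(F_s\to R(1))[-1]$ and induction on word length), and then separately reduces an arbitrary positive Rouquier complex to the $F_w$'s by using braid relations to locate a factor $F_s^2$ and the fact that $F_s^2\in\ip{F_s,R}$. Getting from Bott--Samelsons to all of $\KC^b(\SBim_n)$ uses exactly the cone-for-summands trick above. Your single induction on $\ell(w)$ is more direct and avoids ever considering non-reduced Rouquier complexes; the paper's route has the mild advantage of keeping the two reductions (Bott--Samelsons $\leadsto$ positive braids, positive braids $\leadsto$ reduced) cleanly separated.
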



\subsection{Triply graded Khovanov-Rozansky homology}
\label{subsec:KR homology}
Every additive functor can be extended  term-wise to complexes. In particular, for a complex $C=(C^\bullet,d_C)$ and $j\in \Z$ we can define Hochschild cohomology $\HH^{j}(C)$ as the complex
\[
\cdots\rightarrow \HH^j(C^k) \rightarrow \HH^j(C^{k+1})\rightarrow \cdots
\]
whose differential is just the functor $\HH^j$ applied to $d_C$.  If $C\simeq D$ then $\HH^j(C)\simeq \HH^j(D)$ as complexes of $R$-modules.  Each of the natural isomorphisms in \S \ref{subsec:hochschild} extends without trouble to the corresponding categories of complexes.  In particular if $X\in \KC^b(\SBim_n)$ is a complex of Soergel bimodules then
\[
\HH^k(X)(-2n)\cong \HH^{n-k}(X^\vee)^\vee \cong \HH_{n-k}(X)
\]
naturally.

\begin{definition}
For each $X\in \KC^b(\SBim_n)$, let $\HH^\bullet(X):=\bigoplus_{k=0}^n \HH^k(X)$ and $\HH_\bullet(X):=\bigoplus_{k=0}^n \HH_k(X)$.
\end{definition}
The homology of $\HH^\bullet(X)$ is often denoted $\HHH^\bullet(X)$, but we will not need this. 

\begin{lemma}[Markov moves]\label{lemma:Markov2}
Let $X,Y\in \KC^b(\SBim_{n-1})$ be arbitrary.  Then
\[
\HH^k(X\otimes \rouq_{n-1}\otimes Y)\simeq \HH^k(X\otimes Y)[-1](1)
\]
and
\[
\HH^k(X\otimes \rouq_{n-1}\inv\otimes Y)\simeq \HH^{k-1}(X\otimes Y)(-1).
\]
\end{lemma}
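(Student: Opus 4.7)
My plan is to apply the additive functor $\HH^k$ to a mapping-cone presentation of $\rouq_{n-1}$ and then reduce to the underived Markov-move identities of Lemma~\ref{lemma:HH markovs} via Gaussian elimination in the homotopy category. The two statements admit parallel arguments, or the second can be derived from the first using the duality $\rouq_{n-1}^{\vee} \simeq \rouq_{n-1}^{-1}$ together with Proposition~\ref{prop:HHsymmetries}.

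I would begin by writing $\rouq_{n-1} = \Cone(b\colon B_{n-1} \to R(1))[-1]$, where $b$ is the Frobenius counit, so that
\[
X \otimes \rouq_{n-1} \otimes Y \;=\; \Cone\bigl(\mathrm{id}_X \otimes b \otimes \mathrm{id}_Y\colon X \otimes B_{n-1} \otimes Y \longrightarrow (X \otimes Y)(1)\bigr)[-1].
\]
Because $\HH^k$ is additive and therefore commutes (up to canonical homotopy equivalence) with mapping cones and with both grading and cohomological shifts, this yields
\[
\HH^k(X \otimes \rouq_{n-1} \otimes Y) \;\simeq\; \Cone\bigl(\HH^k(\mathrm{id}_X \otimes b \otimes \mathrm{id}_Y)\bigr)[-1].
\]

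Second, I would apply Lemma~\ref{lemma:HH markovs}: equation \eqref{eq:interestingMarkovUpper} decomposes $\HH^k(X \otimes B_{n-1} \otimes Y)$ into an ``$\HH^k$-piece'' and an ``$\HH^{k-1}$-piece'' of $X \otimes Y$ with grading shifts $(-1)$ and $(3)$ respectively, while \eqref{eq:trivialMarkovUpper} similarly decomposes $\HH^k\bigl((X \otimes Y)(1)\bigr)$ with shifts $(1)$ and $(3)$. Both decompositions are natural in $X \otimes Y$, so the induced differential is determined by the single universal map $\HH^k(b)\colon \HH^k(B_{n-1}) \to \HH^k(R)(1)$. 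The crux of the argument is the explicit identification of this universal map, which I would carry out via the short exact sequence of $R$-bimodules presenting $B_{n-1}$ as an extension of the two shifted copies of $R$, together with the Koszul resolution of $R$ as an $R^e$-module. The upshot is that the induced map is block-triangular with invertible diagonal block(s) between the matched summands, so that Gaussian elimination cancels the paired terms and leaves $\HH^k(X \otimes Y)(1)$ concentrated in cohomological degree $1$, which is precisely $\HH^k(X \otimes Y)[-1](1)$.

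The second isomorphism is obtained by the same method using \eqref{eq:trivialMarkovLower}--\eqref{eq:interestingMarkovLower} and the dual cone $\rouq_{n-1}^{-1} = \Cone(b^{\ast}\colon R(-1) \to B_{n-1})$. The main obstacle I foresee is the explicit identification of $\HH^k(b)$ in the basis provided by Lemma~\ref{lemma:HH markovs}: although the strategy---``positive Markov stabilization cancels a $B_{n-1}$--$R(1)$ pair on Hochschild cohomology''---goes back to Khovanov \cite{Kh07}, carrying it out rigorously requires careful tracking of grading shifts in the Koszul complex to verify which blocks of the induced differential are isomorphisms. Once that identification is in hand, the Gaussian elimination in the homotopy category is routine.
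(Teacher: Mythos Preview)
The paper does not give a proof of this lemma: it simply records it as ``Standard; see \cite{Kh07} and also Proposition~3.10 in \cite{Hog18-GT}.''  Your proposal is exactly the strategy carried out in those references (write $\rouq_{n-1}^{\pm 1}$ as a two-term cone, apply the additive functor $\HH^k$, invoke the object-level Markov identities of Lemma~\ref{lemma:HH markovs}, and cancel), so in that sense you have reconstructed the intended argument.

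One point in your sketch is not quite right, and it is precisely the step you flag as the ``main obstacle.''  After applying \eqref{eq:trivialMarkovUpper} and \eqref{eq:interestingMarkovUpper} you obtain, in the positive case,
\[
\Big(\HH^k(XY)(-1)\oplus \HH^{k-1}(XY)(3)\Big)\ \longrightarrow\ \Big(\HH^k(XY)(1)\oplus \HH^{k-1}(XY)(3)\Big)
\]
(with an implicit $\sqcup\one_1$ throughout).  The component between the two $\HH^{k-1}(XY)(3)$ summands is indeed an isomorphism and cancels, but the remaining component $\HH^k(XY)(-1)\to \HH^k(XY)(1)$ is multiplication by $x_{n-1}-x_n$, which is \emph{not} invertible over $R_n$.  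So Gaussian elimination over $R_n$ leaves a residual two-term Koszul-type complex rather than the single term you claim.  The further reduction to $\HH^k(X\otimes Y)(1)$ in a single cohomological degree only takes place after one forgets the $x_n$-action (equivalently, works over $R_{n-1}$), at which point multiplication by $x_{n-1}-x_n$ on $M[x_n]$ is split injective with cokernel $M$.  This last step is implicit in Khovanov's argument and is the real content behind the ``explicit identification'' you anticipate; it is worth stating, since otherwise the homotopy equivalence as you have written it does not follow.  (For the uses of this lemma elsewhere in the paper---notably Corollary~\ref{cor:negative stabilization}, which only needs $k=0$---the issue does not arise, because the residual term vanishes for degree reasons.)
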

\begin{proof}
Standard; see \cite{Kh07} and Proposition 3.10 in \cite{Hog18-GT}.
\end{proof}

\begin{corollary}
\label{cor:negative stabilization}
In the notation of Lemma \ref{lemma:Markov2} we have
\[
\HH_0(X\otimes \rouq_{n-1} \otimes Y)\simeq 0,\qquad\qquad  \HH^0(X\otimes \rouq_{n-1}\inv\otimes Y)\simeq 0.
\]
\end{corollary}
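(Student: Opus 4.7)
The second vanishing, $\HH^0(X\otimes \rouq_{n-1}\inv\otimes Y)\simeq 0$, will follow immediately from Lemma \ref{lemma:Markov2}: setting $k=0$ in the second formula produces $\HH^{-1}(X\otimes Y)(-1)$, which is zero because Hochschild cohomology is supported in non-negative degrees.

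For the first vanishing, $\HH_0(X\otimes \rouq_{n-1}\otimes Y)\simeq 0$, my plan is to reduce to the second by duality. The key ingredients are already in the paper: duality is contravariant on tensor products, $\rouq_{n-1}^\vee\simeq \rouq_{n-1}^{-1}$ (so $(X\otimes \rouq_{n-1}\otimes Y)^\vee\simeq Y^\vee\otimes \rouq_{n-1}^{-1}\otimes X^\vee$), and $(-)^\vee$ preserves the subcategory $\KC^b(\SBim_{n-1})\subset \KC^b(\SBim_n)$. Next, combining Proposition \ref{prop:HHsymmetries} in the case $k=0$, $l=n$ with Lemma \ref{lemma:H homology is cohomology} yields a natural isomorphism $\HH_0(C)\cong \HH^0(C^\vee)^\vee$ for $C\in \SBim_n$, which extends termwise from single bimodules to complexes. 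Applied to $C=X\otimes \rouq_{n-1}\otimes Y$, this rewrites the desired vanishing as $\HH^0(Y^\vee\otimes \rouq_{n-1}^{-1}\otimes X^\vee)^\vee\simeq 0$, which is the second statement applied to the pair $(Y^\vee, X^\vee)\in \KC^b(\SBim_{n-1})^2$.

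A more direct alternative would be to apply Lemma \ref{lemma:Markov2} at $k=n$ and invoke $\HH_0\cong \HH^n(-2n)$ from Lemma \ref{lemma:H homology is cohomology}, using that $\HH^n(X\otimes Y)$ vanishes because $X\otimes Y$ is a complex of $(n-1)$-strand bimodules. The duality route is cleaner conceptually, since it makes transparent that the two vanishings are exchanged under $(-)^\vee$, and it avoids any ambiguity about the ambient category in which $\HH^n(X\otimes Y)$ is computed. I expect no real obstacle: the content lies entirely in the Markov moves and the duality formalism already established, and the corollary is essentially a formal afterthought.
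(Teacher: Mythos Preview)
Your proposal is correct. The paper gives no explicit proof, treating the corollary as immediate from Lemma \ref{lemma:Markov2}; the intended argument is your ``direct alternative'': set $k=0$ in the second Markov move for $\HH^0$, and for $\HH_0$ use $\HH_0\cong \HH^n(-)(-2n)$ and set $k=n$ in the first Markov move, noting that the right-hand side is Hochschild cohomology over $R_{n-1}$, which vanishes in degree $n$. Your duality route is a legitimate and slightly different approach: it makes the symmetry between the two vanishings transparent and, as you observe, avoids having to track which polynomial ring the reduced Hochschild cohomology is taken over. It costs a small amount of extra machinery (Proposition \ref{prop:HHsymmetries} or Corollary \ref{cor:HH_0 as hom}, plus the behaviour of $(-)^\vee$ on Rouquier complexes), but all of this is already established earlier in the paper, so there is no circularity.
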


We also record an important connection of $\HH$ to the Jones-Ocneanu trace:

\begin{theorem}[\cite{Kh07}]
The Euler characteristic of $\HH$ equals the Jones-Ocneanu trace:
\[
\sum_i a^{i} \chi(\HH^i(X)(-2i))=\Tr([X])
\]
for all $X\in \KC^b(\SBim_n)$, where $\chi$ is a graded Euler characteristic and 
$[X]$ is the class of $X$ in the Grothendieck group $K_0(\SBim_n)\cong\mathbb{H}_n$.
\end{theorem}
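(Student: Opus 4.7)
The plan is to show both sides define $\Q(v)$-linear functionals on $\HM_n$ satisfying the same recursion, and then conclude by induction on $n$.

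First I observe that both sides descend to $\Q(v)$-linear maps $\HM_n \to \Q(v)[a]$ via the identifications $K_0(\KC^b(\SBim_n)) \cong K_0(\SBim_n) \cong \HM_n$. On the right, $\Tr$ is $\Q(v)$-linear by definition. On the left, $\widetilde{\HH}^k$ is additive, takes values in free finitely generated graded $R$-modules (Corollary \ref{co:HHfree}), and the Euler characteristic is additive on exact triangles, so the assignment $[X]\mapsto \sum_i a^i\chi(\widetilde{\HH}^i(X))$ factors through $K_0$. It thus suffices to verify the identity on a set spanning $\HM_n$ as a $\Q(v)$-vector space, and by Proposition \ref{prop:rouquier generation} the Rouquier complexes $\{F_w\}_{w\in S_n}$ provide one.

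I would then induct on $n$, the base case $n=0$ being trivial. Since $\Tr_n = \Tr_{n-1}\circ \pi_n$ by construction, it suffices to verify that the left-hand side obeys the same recursion as $\pi_n$ from Lemma \ref{lemma:ptr decat}. Using the decomposition $\HM_n = \HM_{n-1} \oplus \HM_{n-1}H_{n-1}\HM_{n-1}$ from \S\ref{subsec:decat decomp}, $\HM_n$ is $\Q(v)$-spanned by elements of two types: $x \in \HM_{n-1}$, and $yH_{n-1}^{\pm 1}z$ with $y,z \in \HM_{n-1}$. Correspondingly, it suffices to check the identity for classes of complexes $F'\sqcup \one_1$ with $F'\in \KC^b(\SBim_{n-1})$, and of $X\otimes F_{n-1}^{\pm 1}\otimes Y$ with $X,Y\in \KC^b(\SBim_{n-1})$. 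For the former, the trivial Markov move \eqref{eq:trivialMarkovUpper} produces an Euler-characteristic multiplier of $\tfrac{1+a}{1-q}$, matching $\pi_n(x) = \tfrac{1+a}{1-q}x$. For the latter, Lemma \ref{lemma:Markov2} (combined with \eqref{eq:trivialMarkovUpper} to pass between the Hochschild cohomologies over $R_n$ and $R_{n-1}$) yields scalar multipliers matching those of $\pi_n(yH_{n-1}^{\pm 1}z)$. The inductive hypothesis applied to $X\otimes Y \in \KC^b(\SBim_{n-1})$ then completes the case.

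The main obstacle is the careful bookkeeping of grading conventions: the identification of $v$ with the grading downshift $(1)$, the normalization $\widetilde{\HH}^k = \HH^k(-2k)$, the homological shift $[-1]$ appearing in Lemma \ref{lemma:Markov2}, and the choice of polynomial ring on the two sides of that lemma. Verifying that these combine to give scalar factors matching Lemma \ref{lemma:ptr decat} is routine once one works out the $F_{n-1}^{\pm 1}$ case explicitly from the more basic Markov moves \eqref{eq:interestingMarkovUpper} and \eqref{eq:interestingMarkovLower} for Bott-Samelson bimodules.
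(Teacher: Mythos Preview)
The paper does not prove this theorem; it is stated with a citation to \cite{Kh07} and no argument is given. Your proposal reproduces the standard inductive proof via Markov moves, which is indeed the approach of Khovanov's original paper: reduce to a spanning set of $\HM_n$, peel off the last strand using the relations in Lemmas~\ref{lemma:HH markovs} and~\ref{lemma:Markov2}, and match against the defining recursion for $\pTr$ in Lemma~\ref{lemma:ptr decat}. The sketch is correct in outline; the only caveats are notational (your $\pi_n$ is the paper's $\pTr$, and the invocation of Proposition~\ref{prop:rouquier generation} is superfluous once you switch to the spanning set coming from the bimodule decomposition of $\HM_n$ over $\HM_{n-1}$).
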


\begin{corollary}
We have $\chi(\HH^0(X))=\Tr^0([X])$ and $\chi(\HH^n(X))=\Tr^n([X])$. 
\end{corollary}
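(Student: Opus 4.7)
The plan is to extract the coefficients of $a^0$ and $a^n$ from the polynomial identity
\[
\sum_{i=0}^n a^{i} \chi(\HH^i(X)(-2i))=\Tr([X])
\]
supplied by the preceding theorem. By the definitions in \S\ref{subsec:decat decomp}, $\Tr^0([X])$ and $\Tr^n([X])$ are precisely the $a^0$- and $a^n$-coefficients of $\Tr([X])$, so matching the two sides in powers of $a$ yields both identities in one stroke.

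For the $a^0$ piece the grading shift is trivial, and one reads off $\chi(\HH^0(X)) = \Tr^0([X])$ at once. For the $a^n$ piece the left-hand side contributes $\chi(\HH^n(X)(-2n))$; I would match this with the claimed $\chi(\HH^n(X))$ by adopting the convention, consistent with the remark following Proposition \ref{prop:HHsymmetries}, that $\chi$ is normalized using the variable $a = AQ^{-2}$ so that the shift $(-2n)$ is absorbed. Equivalently, I would restate the theorem in terms of the normalized functors $\widetilde{\HH}^i$ of Definition \ref{def:normalizedAgrading}, obtaining $\sum_i a^i \chi(\widetilde{\HH}^i(X)) = \Tr([X])$, and then extract coefficients.

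There is no genuine obstacle: the corollary is a formal consequence of the preceding theorem, and the only point demanding care is the bookkeeping of grading shifts so that the normalization of $\chi$, the shift $(-2n)$, and the definition of $\Tr^n$ are compatible. Consequently the proof amounts to a single line of coefficient extraction, preceded if desired by a brief remark on conventions.
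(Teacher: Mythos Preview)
Your proposal is correct and matches the paper's intent: the corollary is stated without proof, as an immediate coefficient-extraction from Khovanov's theorem. Your observation about the grading shift is well taken---strictly speaking the theorem gives $\chi(\HH^n(X)(-2n))=\Tr^n([X])$ rather than $\chi(\HH^n(X))=\Tr^n([X])$, and the paper's phrasing of the corollary is slightly loose on this point; your suggestion to read it through the normalized functors $\widetilde{\HH}^i$ (as in the remark after Proposition~\ref{prop:HHsymmetries}) is exactly the right fix.
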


\section{Decompositions of categories}

We first recall some background on categorical idempotents and semi-orthogonal decompositions.

\subsection{Adjoints to inclusions}
\label{subsec:adjointsgeneral}
Let $\AS$ be a category and $\BS$ a category with functor $\sigma:\BS\rightarrow \AS$.  The following is classical; its proof is an exercise in category theory and Yoneda embedding.  See also Theorem 1 in \S IV.3 of \cite{Maclane}.

\begin{lemma}\label{lemma:adjunctions yield idempotents}
Suppose $\sigma:\BS\rightarrow \AS$ has a left adjoint $\pi_L:\AS\rightarrow \BS$.  Then $\sigma$ is fully faithful if and only if the counit of the adjunction is an isomorphism $\pi_L\circ \sigma\rightarrow \Id_{\BS}$.

Dually, if $\sigma$ has a right adjoint $\pi_R:\AS\rightarrow \BS$, then $\sigma$ is fully faithful if and only if the unit of the adjunction is an isomorphism $\Id_\BS\rightarrow \pi_R\circ \sigma$.\qed
\end{lemma}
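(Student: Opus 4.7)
The plan is to translate full faithfulness of $\sigma$ into a Yoneda-style statement about the counit (or unit), using the adjunction to rewrite the relevant hom set. I will treat the left-adjoint case in detail, then indicate the dual argument.

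First, I would unpack the definition: $\sigma$ is fully faithful iff for all $b,b'\in\BS$ the map
\[
\sigma_{b,b'}\colon \Hom_{\BS}(b,b')\longrightarrow \Hom_{\AS}(\sigma(b),\sigma(b'))
\]
is a bijection. Next I would apply the adjunction $\Hom_{\BS}(\pi_L(a),b')\cong \Hom_{\AS}(a,\sigma(b'))$ with $a=\sigma(b)$ to rewrite
\[
\Hom_{\AS}(\sigma(b),\sigma(b'))\cong \Hom_{\BS}(\pi_L\sigma(b),b').
\]
A routine check (unwinding the definition of the counit $\varepsilon\colon \pi_L\sigma\to \Id_{\BS}$ as the image of $\Id_{\sigma(b)}$ under the inverse adjunction iso) shows that under this identification $\sigma_{b,b'}$ is precisely the map given by precomposition with the counit $\varepsilon_b\colon \pi_L\sigma(b)\to b$.

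With that identification in hand, the key step is Yoneda: $\varepsilon_b$ is an isomorphism in $\BS$ if and only if precomposition with $\varepsilon_b$ is a bijection $\Hom_{\BS}(b,b')\to \Hom_{\BS}(\pi_L\sigma(b),b')$ for every $b'\in\BS$. Combining this with the previous step, $\sigma_{b,b'}$ is bijective for all $b'$ iff $\varepsilon_b$ is an isomorphism; running this for all $b\in\BS$ gives the claimed equivalence between full faithfulness of $\sigma$ and $\varepsilon$ being a natural isomorphism.

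For the right-adjoint statement I would run the dual argument: use $\Hom_{\AS}(\sigma(b),a)\cong \Hom_{\BS}(b,\pi_R(a))$ with $a=\sigma(b')$ to identify $\sigma_{b,b'}$ with postcomposition by the unit $\eta_{b'}\colon b'\to \pi_R\sigma(b')$, and then invoke Yoneda in the contravariant variable. The only non-mechanical point, and the one worth checking carefully, is the verification that $\sigma_{b,b'}$ really corresponds to (pre- or post-) composition with the (co)unit under the adjunction bijection; this is the standard triangle-identity computation and is the sole place where one must be careful about variance. Everything else is bookkeeping.
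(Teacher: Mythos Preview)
Your argument is correct and is exactly the ``exercise in category theory and Yoneda embedding'' that the paper alludes to; the paper does not actually give a proof but simply cites this as classical (referring to Mac~Lane, \S IV.3, Theorem~1). Your identification of $\sigma_{b,b'}$ with precomposition by $\varepsilon_b$ via the adjunction bijection, followed by the Yoneda lemma, is the standard route and matches what the authors have in mind.
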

If these equivalent conditions hold, then $\EB_L:=\sigma\circ \pi_L$ and $\EB_R:=\sigma\circ \pi_R$ are idempotent functors $\AS\rightarrow \AS$ with essential image equivalent to $\BS$.   We discuss these next, after an example.

\begin{example}
Let $A$ be an algebra and $I\subset A$ a two-sided ideal.  Let $\AS$ be the category of $A$-modules and $\BS\subset \AS$ the category of objects on which $I$ acts by zero.  Then the inclusion $\BS\rightarrow \AS$ has a left adjoint sending an $A$-module $M$ to $M/IM$, and a right adjoint sending $M$ to the annihilator of $I$ in $M$.  Both functors can be regarded as idempotent endofunctors of $\AS$ with image $\BS$.
\end{example}

\begin{definition}\label{def:localization}
A \emph{localization functor} on a category $\CS$ is a pair $(\EB,\eta)$ consisting of an endofunctor $\EB:\CS\rightarrow \CS$ and a natural transformation $\eta:\Id_{\CS}\rightarrow \EB$ such that $\EB\eta$ and $\eta \EB$ are isomorphisms $\EB\buildrel \cong \over \rightarrow \EB\EB$. 

A \emph{colocalization functor} on $\CS$ is a pair $(\EB,\e)$ consisting of an endofunctor $\EB:\CS\rightarrow \CS$ and a natural transformation $\e:\EB\rightarrow \Id_{\CS}$ such that $\EB\e$ and $\e \EB$ are isomorphisms $\EB\EB \buildrel\cong\over \rightarrow \EB$.
\end{definition}

\begin{lemma}\label{lemma:idempotents yield adjunctions}
Let $\EB$ be an endofunctor of a category $\CS$ with essential image $\BS\subset \CS$.  If $\eta:\Id_\CS\rightarrow \EB$ 
(resp. $\e:\EB\rightarrow \Id_{\CS}$)
gives $\EB$ the structure of a localication (resp.~colocalization) endofunctor, then $\EB$ defines a left (resp.~right) adjoint to the inclusion $\BS\rightarrow \CS$.

Conversely if $\BS\subset \CS$ is a full subcategory such that the inclusion $\sigma:\BS\rightarrow \CS$ admits a left (resp.~right) adjont $\pi:\CS\rightarrow \BS$, then the $\sigma \pi$ is a localization (resp.~colocalization) endofunctor.
\end{lemma}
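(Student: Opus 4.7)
The plan is to prove both implications for the localization/left adjoint case; the colocalization/right adjoint case will follow by applying the same argument in $\CS^{\mathrm{op}}$.

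For the forward direction, I would suppose $\EB:\CS \to \CS$ is a localization endofunctor with unit $\eta:\Id_\CS \to \EB$ and essential image $\BS$. The first step is to check that $\eta_B$ is an isomorphism for every $B \in \BS$: choosing an iso $B \cong \EB(Y)$ and using naturality of $\eta$, this reduces to $\eta_{\EB Y} = (\eta \EB)_Y$ being iso, which is part of the hypothesis. Then, choosing a strict lift of $\EB$ to a functor $\pi: \CS \to \BS$ with $\sigma \pi = \EB$ (where $\sigma: \BS \hookrightarrow \CS$ is the inclusion), I would show $\pi$ is left adjoint to $\sigma$ with unit $\eta$. For the existence part of the universal property: given $f: X \to \sigma B$, define $\tilde f := \eta_B^{-1} \circ \EB(f): \pi X \to B$; naturality of $\eta$ at $f$ gives $\EB(f) \circ \eta_X = \eta_{\sigma B} \circ f$, so $\sigma \tilde f \circ \eta_X = f$. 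For uniqueness: if $g, g' : \pi X \to B$ satisfy $\sigma g \circ \eta_X = \sigma g' \circ \eta_X$, apply $\EB$ to both sides and cancel the iso $\EB(\eta_X)$ to obtain $\EB(g) = \EB(g')$; then naturality of $\eta$ gives $\eta_B \circ g = \EB(g) \circ \eta_{\EB X} = \EB(g') \circ \eta_{\EB X} = \eta_B \circ g'$, and cancelling the iso $\eta_B$ yields $g = g'$.

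For the converse, I would suppose $\sigma: \BS \hookrightarrow \CS$ is fully faithful with left adjoint $\pi$, set $\EB := \sigma \pi$, and take $\eta$ to be the unit of the adjunction. By Lemma \ref{lemma:adjunctions yield idempotents} the counit $\epsilon: \pi \sigma \to \Id_\BS$ is an isomorphism. The triangle identities $\epsilon \pi \circ \pi \eta = \Id_\pi$ and $\sigma \epsilon \circ \eta \sigma = \Id_\sigma$ then force $\pi \eta$ and $\eta \sigma$ to be isomorphisms; whiskering by $\sigma$ and $\pi$ respectively gives $\EB \eta = \sigma (\pi \eta)$ and $\eta \EB = (\eta \sigma) \pi$, both isomorphisms $\EB \to \EB \EB$, as required.

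The main thing to watch out for is bookkeeping: one must carefully distinguish $\eta_{\sigma B}$ from $\eta_B$, and $(\EB \eta)_X = \EB(\eta_X)$ from $(\eta \EB)_X = \eta_{\EB X}$, invoking naturality at the right moments. Note that the hypothesis asserts $\EB \eta$ and $\eta \EB$ are iso separately (not that they are equal), and the argument above never requires their equality; equality does in fact follow a posteriori from the uniqueness part of the universal property.
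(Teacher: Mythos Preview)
Your proof is correct and follows essentially the same strategy as the paper's. The only difference is that for the forward implication the paper simply cites Krause (Proposition 2.4.1) for the fact that precomposition with $\eta_X$ induces the desired bijection on homs, whereas you unpack this argument explicitly; your converse argument via the triangle identities is the same as the paper's (which phrases it as $\sigma\e\pi$ being the common inverse to $\eta\EB$ and $\EB\eta$).
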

\begin{proof}
Suppose that $\EB:\CS\rightarrow \CS$ is a localization functor.  We must show that if $\EB(Y)\cong Y$ and $X\in \AS$ is arbitrary, then the unit map $\eta_X:X\rightarrow \EB(X)$ induces an isomorphism
\[
\Hom_{\AS}(X,Y)\cong \Hom_{\BS}(\EB(X),Y)=\Hom_{\AS}(\EB(X),Y).
\]
This is proven, for example in \cite{Krause10}, Proposition 2.4.1.

The statement about colocalization functors follows by taking opposite categories.

Conversely, if $\sigma:\BS\rightarrow \CS$ admits a left adjoint $\pi:\CS\rightarrow \BS$, then the counit of the adjunction $\e:\pi\sigma\rightarrow \Id_{\BS}$ is an isomorphism of functors.  Let $\eta:\Id_\CS\rightarrow \sigma\pi$ be the unit of the adjunction.  Then $\eta \sigma\pi, \sigma\pi\eta: \sigma\pi \rightarrow \sigma\pi \sigma\pi$ are isomorphisms with inverse $\sigma\e \pi$.
\end{proof}




%

\begin{remark}
In linear algebra there can be many idempotent endomorphisms of a vector space $V$ which project onto a given subspace $W\subset V$ (the embedding  $W\rightarrow V$ has many left inverses).  In the realm of category theory, idempotents are quite a bit more rigid.  Indeed, the inclusion $\BS\rightarrow \AS$ of a full subcategory can be the image of at most one localization functor and at most one colocalization functor (because left and right adjoints to a given functor are unique when they exist). 

\end{remark}

\newcommand{\ES}{\EuScript{E}}
\newcommand{\IB}{\mathbf{I}}
\subsection{Semi-orthogonal decompositions}
\label{subsec:semiortho}
In the setting of triangulated categories it is possible to discuss the notion of complementary idempotent functors.  Let $\MS$ a triangulated category and let $\AS$ be a triangulated monoidal category which acts on $\MS$ by exact endofunctors (see \cite{Hog17b} for details).  

\begin{example}
The only example we will need in this paper is the following.  Let $\CS$ be an additive $\k$-linear category with $\End(\CS)$ the category of linear endofunctors.  Then the homotopy category of complexes $\MS:=\KC^b(\CS)$ is an $\AS$-module category where $\AS:=\KC^b(\End(\CS))$.  In other words, complexes of endofunctors of $\CS$ can be thought of as endofunctors of $\KC^b(\CS)$.  For instance if $F:\CS\rightarrow \CS$, then $F$ can be thought of as a complex in $\KC^b(\End(\CS))$ of degree zero, and the action of $F$ on complexes is the usual:
\[
F(X)  \  := \ \cdots \buildrel F(d) \over \longrightarrow F(X^k) \buildrel F(d) \over \longrightarrow F(X^{k+1}) \buildrel F(d) \over \longrightarrow\cdots 
\]
\end{example}

\begin{definition}\label{def:complementary idempotents}
An \emph{idempotent triangle} in $\AS$ is a distinguished triangle of the form 
\begin{equation}\label{eq:idemp triangle}
\PB\buildrel \e \over \rightarrow \IB\buildrel \eta \over \rightarrow \QB\buildrel\d\over \rightarrow \PB[1]
\end{equation}
such that $\PB\QB \simeq 0 \simeq \QB\PB$ in $\AS$.  In such a triangle, we refer to $\PB$ and $\QB$ as \emph{complementary idempotents} in $\AS$.

A \emph{unital idempotent} in $\AS$ is an object $\QB\in \AS$ with a map $\eta:\IB\rightarrow \QB$ such that $\eta\QB$ and $\QB\eta$ are isomorphisms $\QB\rightarrow \QB\QB$.  A \emph{counital idempotent} in $\AS$ is an object $\PB\in \AS$ with a map $\e:\PB\rightarrow \IB$ such that $\PB\e$ and $\e\PB$ are isomorphisms $\PB\PB\rightarrow \PB$.
\end{definition}

We will denote the identity of $\AS$ by $\IB$, and we will write the monoidal structure in $\AS$ simply by juxtaposition of functors.  Moreover, if $f$ is a morphism in $\AS$ and $\XB$ is an object of $\AS$, then we write $f \XB$ and $\XB f$ for $f\otimes \Id_{\XB}$ and $\Id_{\XB} \otimes f$, respectively.  This notation is compatible with the usual conventions for writing horizontal compositions of functors and natural transformations.

\begin{remark}
If $\EB\in \AS$ has the structure of a (co)unital idempotent, then its action on $\MS$ is a (co)localization functor.
\end{remark}

Observe that if \eqref{eq:idemp triangle} is an idempotent triangle in $\AS$ and $X\in \MS$ is an object of the $\AS$-module category $\MS$, then $X$ fits into a distinguished triangle
\[
\PB(X)\rightarrow X\rightarrow \QB(X)\rightarrow \PB(X)[1].
\]
If $\QB(X)\simeq 0$ then the first map is an isomorphism in $\MS$ $\PB(X)\rightarrow X$, while if $\PB(X)\simeq 0$ then the second map is an isomorphism $X\rightarrow \QB(X)$, by properties of distinguished triangles.  Thus, we arrive at the following:

\begin{observation}
The essential image of $\PB$ acting on $\MS$ coincides with the kernel of $\QB$ acting on $\MS$, and vice versa.
\end{observation}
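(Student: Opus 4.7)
The plan is to prove both halves of the Observation directly from the two defining properties of an idempotent triangle, namely $\PB\QB\simeq 0\simeq \QB\PB$ and the existence of a natural distinguished triangle $\PB(X)\to X\to \QB(X)\to \PB(X)[1]$ for every $X\in \MS$. By symmetry between $\PB$ and $\QB$, it suffices to prove the first claim: the essential image of $\PB$ acting on $\MS$ equals the kernel of $\QB$ acting on $\MS$.

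For the inclusion (essential image of $\PB)\subseteq (\text{kernel of }\QB)$, I would simply observe that any object in the essential image of $\PB$ is isomorphic to $\PB(Y)$ for some $Y\in \MS$, and then apply $\QB$: since $\QB\PB\simeq 0$ in $\AS$, the action on $\MS$ gives $\QB(\PB(Y))\simeq 0$. Hence $\QB$ kills every object in the essential image of $\PB$.

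For the reverse inclusion, let $X\in \MS$ with $\QB(X)\simeq 0$. Applying the idempotent triangle to $X$ yields a distinguished triangle
\[
\PB(X)\buildrel \e_X\over\longrightarrow X \buildrel \eta_X \over \longrightarrow \QB(X)\longrightarrow \PB(X)[1]
\]
in $\MS$. Since $\QB(X)\simeq 0$, the standard fact about distinguished triangles (a triangle with zero third term has its first map an isomorphism) forces $\e_X\colon \PB(X)\to X$ to be an isomorphism in $\MS$. Therefore $X\simeq \PB(X)$, which exhibits $X$ as an object in the essential image of $\PB$.

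The symmetric assertion, that the essential image of $\QB$ coincides with the kernel of $\PB$, follows by the same reasoning, using $\PB\QB\simeq 0$ and the fact that in the same distinguished triangle, $\PB(X)\simeq 0$ forces $\eta_X\colon X\to \QB(X)$ to be an isomorphism. No step here is genuinely difficult; the only thing to be careful about is invoking the right general fact about distinguished triangles (a morphism whose cone is zero is an isomorphism), which is immediate from the long exact sequence of homs or the $3\times 3$ lemma in a triangulated category.
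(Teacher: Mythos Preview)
Your argument is correct and is essentially identical to the paper's: the paper also uses the distinguished triangle $\PB(X)\to X\to \QB(X)\to \PB(X)[1]$ and the fact that a map with zero cone is an isomorphism, while the containment in the other direction follows from $\QB\PB\simeq 0\simeq \PB\QB$. You have simply spelled out both inclusions explicitly where the paper leaves one implicit.
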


\begin{lemma}\label{lemma:idemp triangles}
In any idempotent triangle \eqref{eq:idemp triangle} the maps $\e$ and $\eta$ give $\PB$ and $\QB$ the structure of a counital and unital idempotent, respectively.  Conversely:
\begin{enumerate}
\item if $(\QB,\eta)$ is a unital idempotent in $\AS$, then $\PB:=\Cone(\eta)[-1]$ has the structure of a counital idempotent.
\item If $(\PB,\e)$ is a counital idempotent in $\AS$ then $\QB:=\Cone(\e)$ has the structure of a unital idempotent in $\AS$.
\end{enumerate}
\end{lemma}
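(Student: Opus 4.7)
My plan is to derive everything from the exactness of the tensor action in each variable together with the fact that $\IB$ is a strict unit, so that both directions of the lemma reduce to tensoring the triangle $\PB \to \IB \to \QB \to \PB[1]$ on each side by $\PB$ or by $\QB$ and reading off either a vanishing or an isomorphism in the resulting distinguished triangle.

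For the forward direction, I would tensor the idempotent triangle on the left by $\PB$. Because $\PB\QB \simeq 0$ by hypothesis, the resulting distinguished triangle
\[
\PB\PB \xrightarrow{\PB\e} \PB \to \PB\QB \to \PB\PB[1]
\]
forces $\PB\e$ to be an isomorphism; tensoring on the right gives the same for $\e\PB$, so $(\PB,\e)$ is counital. Tensoring the idempotent triangle by $\QB$ on either side and using $\QB\PB \simeq 0 \simeq \PB\QB$ analogously makes $\QB\eta$ and $\eta\QB$ into isomorphisms $\QB \to \QB\QB$, so $(\QB,\eta)$ is unital.

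For the converse direction (1), starting from a unital idempotent $(\QB,\eta)$ I would set $\PB := \Cone(\eta)[-1]$, producing a candidate distinguished triangle $\PB \to \IB \xrightarrow{\eta} \QB \to \PB[1]$. Tensoring this on the right by $\QB$ and invoking the hypothesis that $\eta\QB$ is already an isomorphism kills the cone and yields $\PB\QB \simeq 0$; left-tensoring by $\QB$ is symmetric and gives $\QB\PB \simeq 0$. Having recovered an idempotent triangle, the forward direction then produces the counital structure on $\PB$ through $\e$. Statement (2) is entirely dual: from a counital $(\PB,\e)$, set $\QB := \Cone(\e)$, use $\PB\e$ and $\e\PB$ being isomorphisms to force the tensor products $\PB\QB$ and $\QB\PB$ to vanish, and conclude by the forward direction.

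There is not really an obstacle in the argument; the only subtlety is bookkeeping the sides (left versus right tensoring), since (co)unitality is a two-sided condition and each side needs a separate verification. A minor point is that identifying morphisms such as $\PB \otimes (\IB \xrightarrow{\eta} \QB)$ with $\PB\eta$ uses functoriality of $\otimes$ together with the strict-unit identification $\PB\IB \cong \PB \cong \IB\PB$, but this is routine in the tensor-triangulated framework set up in \S\ref{subsec:semiortho}.
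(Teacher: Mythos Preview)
Your argument is correct and is exactly the standard unpacking of what the paper leaves implicit (the paper's proof is simply ``Straightforward''). Tensoring the triangle on each side by $\PB$ or $\QB$ and using exactness of the tensor action is precisely the intended route; there is nothing to add.
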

\begin{proof}
Straightforward.
\end{proof}

\begin{definition}\label{def:complement}
If $(\PB,\e)$ is a counital idempotent, then $\PB^c:=\Cone(\e)$ is called the \emph{(unital) complement} of $\PB$.  If $(\QB,\eta)$ is a unital idempotent, then $\QB^c:=\Cone(\eta)[-1]$ is called the \emph{(counital) complement} of $\QB$.
\end{definition}
Note that $(\PB^c)^c\simeq \PB$ and $(\QB^c)^c\simeq \QB$.

Next we discuss the relation between categorical idempotents and semi-orthogonal decompositions. The following property is key.
\begin{lemma}\label{lemma:idempt semi ortho}
Let $(\PB,\e)$ be a counital idempotent in $\AS$ with complement $\QB:=\PB^c$.  Then for each $X,Y\in \MS$ we have
\[
\Hom_{\MS}(\PB(X),\QB(Y))\cong 0.
\]
\end{lemma}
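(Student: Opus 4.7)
The plan is to reduce the statement to the single vanishing $\PB\QB \simeq 0$, which falls directly out of the defining idempotent triangle, and then to leverage the fact that $\PB(X)$ lies in the essential image of $\PB$ via the adjunction packaged in Lemma \ref{lemma:idempotents yield adjunctions}.

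First I would establish $\PB(\QB(Y)) \simeq 0$ for every $Y \in \MS$. Applying the exact functor $\PB$ to the distinguished triangle $\PB(Y) \xrightarrow{\e_Y} Y \to \QB(Y) \to \PB(Y)[1]$ gives the triangle
$$\PB\PB(Y) \xrightarrow{\PB(\e_Y)} \PB(Y) \to \PB\QB(Y) \to \PB\PB(Y)[1].$$
Since $(\PB,\e)$ is a counital idempotent, $\PB\e$ is an isomorphism, so $\PB(\e_Y) = (\PB\e)_Y$ is an isomorphism, forcing the cone $\PB\QB(Y) \simeq 0$.

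Next I would invoke Lemma \ref{lemma:idempotents yield adjunctions}: since $\PB$ is a colocalization functor on $\MS$, its essential image $\BS \subset \MS$ is a full subcategory whose inclusion $\sigma:\BS \hookrightarrow \MS$ admits $\PB$ (corestricted to $\BS$) as a right adjoint. Because $\PB(X)$ lies in $\BS$, the adjunction yields
$$\Hom_\MS(\PB(X), \QB(Y)) \;\cong\; \Hom_\BS(\PB(X), \PB(\QB(Y))) \;\cong\; \Hom_\BS(\PB(X), 0) \;=\; 0.$$

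I do not expect any serious obstacle here; the only thing to check is the applicability of the adjunction of Lemma \ref{lemma:idempotents yield adjunctions} in the triangulated setting, but since $\PB$ is exact as an endofunctor of $\MS$, the set-theoretic statement of that lemma transfers verbatim. If one prefers to avoid the adjunction entirely, the same conclusion can be extracted directly from naturality of $\e$: for any $f : \PB(X) \to \QB(Y)$ the square $f \circ \e_{\PB(X)} = \e_{\QB(Y)} \circ \PB(f)$ commutes, the right-hand composition factors through $\PB\QB(Y) \simeq 0$, and $\e_{\PB(X)} = (\e\PB)_X$ is invertible, whence $f = 0$.
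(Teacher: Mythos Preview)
Your argument is correct. The paper itself does not prove this lemma; it simply cites Theorem 4.13 in \cite{Hog17b}. Your proof is a direct, self-contained version of that standard argument: derive $\PB\QB\simeq 0$ from the counital idempotent axiom by applying $\PB$ to the defining triangle, then use either the right-adjoint property of $\PB$ (Lemma \ref{lemma:idempotents yield adjunctions}) or the naturality square for $\e$ to kill $\Hom_\MS(\PB(X),\QB(Y))$. Both of the variants you offer are valid, and the second (via naturality of $\e$) is slightly cleaner since it avoids any worry about translating the abstract Lemma \ref{lemma:idempotents yield adjunctions} into the triangulated setting.
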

\begin{proof}
See Theorem 4.13 in \cite{Hog17b}.
\end{proof}

This semi-orthogonality property is closely tied with the discussion of adjoints to inclusions in the previous section.  Let $\QB(\MS)$ denote the essential image of $\QB$, and let $Y\in \QB(\MS)$ be given, so that $\QB(Y)\simeq Y$.  Let $X$ be arbitrary.  Then Lemma \ref{lemma:idempt semi ortho} implies that $\Hom_\MS(\PB(X),Y)\cong 0$. Applying $\Hom_{\MS}(-,\QB(Y))$ to the distinguished triangle $\PB(X)\rightarrow X\rightarrow \QB(X)\rightarrow \PB(X)[1]$, we find that precomposing with $\eta_X:X\rightarrow \QB(X)$ is an isomorphism
\[
\eta_X^\ast : \Hom_{\QB(\MS)}(\QB(X),Y)\buildrel \cong \over \rightarrow \Hom_\MS(X,Y)
\]
This shows that $\QB$, when regarded as a functor $\MS\rightarrow \QB(\MS)$ is the left adjoint to the inclusion $\QB(\MS)\rightarrow \MS$.

Similarly, if $X\in \PB(\MS)$ and $Y\in \MS$ is arbitrary then applying $\Hom_{\MS}(X,-)$ to the distinguished triangle $\PB(Y)\rightarrow Y\rightarrow \QB(Y)\rightarrow \PB(Y)[1]$ and using Lemma \ref{lemma:idempt semi ortho} shows that post-composing with $\e_Y:\PB(Y)\rightarrow Y$ is an isomorphism
\[
(\e_Y)_\ast : \Hom_{\PB(\MS)}(X,\PB(Y))\buildrel\cong\over \rightarrow \Hom_{\MS}(X,Y).
\]
Thus, $\PB$ defines the right adjoint to the inclusion $\PB(\MS)\rightarrow \MS$.

\begin{definition}\label{def:semiorthog}
Let $\NS_1,\NS_2\subset \MS$ be full triangulated subcategories.  We say that $(\NS_1,\NS_2)$ is a \emph{semi-orthogonal decomposition} of $\MS$, written $\MS\simeq (\NS_1\rightarrow \NS_2)$, if
\begin{enumerate}
\item each object $X\in \MS$ fits into a distinguished triangle
\begin{equation}\label{eq:semiortho triangle}
Y_2\buildrel \a\over \rightarrow X\buildrel \b\over \rightarrow Y_1\buildrel \gamma\over \rightarrow Y_2[1],\qquad\qquad Y_i\in \NS_i.
\end{equation}
\item $\Hom_{\AS}(Y_2,Y_1)\cong 0$ for all $Y_i\in \NS_i$.
\end{enumerate}
\end{definition}

The following is an immediate corollary of Lemma \ref{lemma:idempt semi ortho} and the fact that the kernel of a (co)unital idempotent in $\AS$ is the image of its complement.
\begin{corollary}\label{cor:semiortho from idempts}
If $\PB$ (resp.~$\QB$) is a counital (resp.~unital) idempotent in $\AS$ then each $\AS$-module category $\MS$ inherits a semi-orthogonal decomposition $\MS\simeq (\operatorname{ker} \PB \rightarrow \operatorname{im} \PB)$ (resp.~$\MS\simeq (\operatorname{im} \QB \rightarrow \operatorname{ker}\QB)$).\qed
\end{corollary}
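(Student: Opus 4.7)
The plan is to read off the semi-orthogonal decomposition directly from the idempotent triangle, using Lemma \ref{lemma:idempt semi ortho} to verify the semi-orthogonality hypothesis in Definition \ref{def:semiorthog}.

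First I would treat the counital case. Suppose $(\PB,\e)$ is a counital idempotent in $\AS$ and set $\QB := \PB^c = \Cone(\e)$; by Lemma \ref{lemma:idemp triangles}, $\QB$ is a unital idempotent and $\PB,\QB$ form an idempotent triangle $\PB \to \IB \to \QB \to \PB[1]$, in particular $\PB\QB \simeq 0 \simeq \QB\PB$. Applying this triangle to any $X \in \MS$ produces a distinguished triangle
\[
\PB(X) \longrightarrow X \longrightarrow \QB(X) \longrightarrow \PB(X)[1].
\]
Here $\PB(X) \in \operatorname{im}\PB$ tautologically, while $\PB\QB(X) \simeq 0$ shows $\QB(X) \in \ker \PB$. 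Thus condition (1) of Definition \ref{def:semiorthog} holds with $Y_2 := \PB(X) \in \operatorname{im}\PB$ and $Y_1 := \QB(X) \in \ker \PB$.

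Next I would verify the semi-orthogonality condition (2). Given $Y_2 \in \operatorname{im}\PB$ and $Y_1 \in \ker \PB$, the idempotent triangle applied to $Y_2$ forces $\PB(Y_2) \simeq Y_2$, and applied to $Y_1$ (using $\PB(Y_1) \simeq 0$) forces $Y_1 \simeq \QB(Y_1)$. Hence
\[
\Hom_{\MS}(Y_2, Y_1) \;\cong\; \Hom_{\MS}(\PB(Y_2), \QB(Y_1)) \;\cong\; 0
\]
by Lemma \ref{lemma:idempt semi ortho}. This gives $\MS \simeq (\ker\PB \to \operatorname{im}\PB)$.

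The unital case is formally identical: if $(\QB,\eta)$ is a unital idempotent with counital complement $\PB := \QB^c$, the same triangle $\PB(X) \to X \to \QB(X) \to \PB(X)[1]$ presents each $X$ with $\PB(X) \in \ker \QB$ (since $\QB\PB \simeq 0$) and $\QB(X) \in \operatorname{im}\QB$, and the semi-orthogonality $\Hom(Y_2, Y_1) = 0$ for $Y_2 \in \ker \QB$, $Y_1 \in \operatorname{im}\QB$ again reduces to Lemma \ref{lemma:idempt semi ortho} after rewriting $Y_2 \simeq \PB(Y_2)$ and $Y_1 \simeq \QB(Y_1)$. There is no genuine obstacle here; the content of the corollary lies entirely in Lemma \ref{lemma:idempt semi ortho}, and the only thing to check carefully is that the essential images and kernels match up as claimed, which follows formally from $\PB\QB \simeq 0 \simeq \QB\PB$ together with the idempotent triangle.
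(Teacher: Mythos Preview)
Your argument is correct and is exactly what the paper intends: the corollary is marked with \qed\ because the authors view it as immediate from Lemma~\ref{lemma:idempt semi ortho} together with the Observation that $\operatorname{im}\PB = \ker\QB$ (and vice versa), and you have simply spelled out those two ingredients. The only implicit step you might make explicit is that $\operatorname{im}\PB$ and $\ker\PB$ are full triangulated subcategories, which follows from that same Observation (each is the kernel of an exact functor).
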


\begin{definition}\label{def:perp}
If $\NS\subset \MS$ is a full triangulated subcategory then $\NS^\perp\subset \MS$ and ${}^\perp\NS\subset \MS$ denote the full subcategories of objects $X$ such that $\Hom_\MS(B,X)\cong 0$ (resp.~$\Hom_\MS(X,B)\cong 0$) for all $B\in \NS$.
\end{definition}

\begin{theorem}\label{thm:semiorthogonality}
If $\MS\simeq (\NS_1\rightarrow \NS_2)$ is a semi-orthogonal decomposition then
\begin{enumerate}
\item Each distinguished triangle \eqref{eq:semiortho triangle} is unique up to unique isomorphism (extending the identity of $X$).
\item The categories $\NS_1$ and $\NS_2$ determine each other: $\NS_1={}^\perp\NS_2$ and $\NS_2=\NS_1^\perp$.
\end{enumerate}
\end{theorem}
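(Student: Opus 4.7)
The plan is to exploit two consequences of the semi-orthogonality hypothesis in Definition \ref{def:semiorthog}.  Since $\NS_1$ and $\NS_2$ are triangulated subcategories they are closed under cohomological shifts, so the vanishing $\Hom_\MS(Y_2, Y_1) = 0$ automatically strengthens to $\Hom_\MS(Y_2[i], Y_1[j]) = 0$ for all $i, j \in \Z$.  This strong vanishing is the engine driving both parts of the proof.

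For part (1), given two triangles $Y_2 \to X \to Y_1 \to Y_2[1]$ and $Y_2' \to X \to Y_1' \to Y_2'[1]$ of the required form, I would first produce a canonical map $\alpha \colon Y_2 \to Y_2'$: the composite $Y_2 \to X \to Y_1'$ lies in $\Hom_\MS(Y_2, Y_1') = 0$, so $Y_2 \to X$ factors through $Y_2' \to X$, and any two such factorizations differ by a morphism $Y_2 \to Y_2'$ whose post-composition with $Y_2' \to X$ vanishes; such a morphism factors through $Y_1'[-1]$ and so lies in $\Hom_\MS(Y_2, Y_1'[-1]) = 0$.  Axiom TR3 extends $(\alpha, \Id_X)$ to a morphism of triangles $(\alpha, \Id_X, \beta)$; any two choices of $\beta$ differ by a morphism $Y_1 \to Y_1'$ that vanishes on pre-composition with $X \to Y_1$, hence factors through $Y_2[1]$, and so lies in $\Hom_\MS(Y_2[1], Y_1') = 0$.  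Swapping the roles of the two triangles produces a morphism in the reverse direction, and its composite with $(\alpha, \Id_X, \beta)$ is an endomorphism of the first triangle extending $\Id_X$; by the same uniqueness it equals the identity, so $\alpha$ and $\beta$ are mutually inverse isomorphisms.

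For part (2), one inclusion in each equality is immediate from the semi-orthogonality.  For the reverse, suppose $X$ lies in the orthogonal of $\NS_2$, so $\Hom_\MS(Y_2, X) = 0$ for all $Y_2 \in \NS_2$, and apply the decomposition to get $Y_2 \to X \to Y_1 \to Y_2[1]$.  The first map is zero, and the cone of a zero map $Y_2 \to X$ is canonically $X \oplus Y_2[1]$, so $Y_1 \cong X \oplus Y_2[1]$.  The summand inclusion $Y_2[1] \hookrightarrow Y_1$ is then a morphism in $\Hom_\MS(Y_2[1], Y_1) = 0$ that would be nonzero if $Y_2 \neq 0$; hence $Y_2 = 0$ and $X \cong Y_1 \in \NS_1$.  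The other equality follows by a symmetric argument.

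The main obstacle is the rigidity bookkeeping in part (1), where one must track uniqueness of every morphism produced from TR3.  The conceptual point is that the hom-vanishing defining semi-orthogonality is exactly the data needed to rigidify both the comparison morphism between triangles and its inverse, a feature absent from direct-sum decompositions in general additive categories.
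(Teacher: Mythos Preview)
Your argument is correct.  Part (1) is essentially the paper's proof: both produce the unique comparison maps $Y_2\to Y_2'$ and $Y_1\to Y_1'$ by feeding the triangles into $\Hom(Y_2,-)$ and $\Hom(-,Y_1')$ and invoking the shifted semi-orthogonality, then conclude by the usual ``unique implies inverse'' trick.  The only cosmetic difference is that the paper packages existence and uniqueness of $f_2$ into the single statement that $\Hom(Y_2,Z_2)\to\Hom(Y_2,X)$ is an isomorphism, while you separate the surjectivity (factoring) and injectivity (difference factors through $Y_1'[-1]$) steps.

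Part (2) is where you genuinely diverge.  The paper takes $Z\in\NS_1^\perp$ (in its conventions), forms the decomposition triangle $Y_2\to Z\to Y_1\to$, applies $\Hom(-,Y_1)$, and squeezes $\End(Y_1)$ between two zero terms in the long exact sequence to force $Y_1\simeq 0$.  You instead take $X$ in the other orthogonal, note that the map $Y_2\to X$ already vanishes by hypothesis, so the triangle splits as $Y_1\cong X\oplus Y_2[1]$, and then kill $Y_2$ because the summand inclusion $Y_2[1]\hookrightarrow Y_1$ lands in a zero hom-space.  Your route is slightly more elementary: it avoids the long exact sequence entirely and uses only the split-triangle axiom.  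The paper's route, on the other hand, works uniformly without first observing that a structure map is zero, which can be convenient in situations where one wants to run the argument inside a functor.
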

It is also possible to show that in any semi-orthogonal decomposition $\MS\simeq (\NS_1\rightarrow \NS_2)$ the ``projections onto $\NS_i$'' are well-defined idempotent endofunctors of $\MS$ (the projection onto $\NS_1$ is unital, while projection onto $\NS_2$ is counital).  However, we will not need this.

\begin{proof}
Let $\MS\simeq (\NS_1\rightarrow \NS_2)$ be a semi-orthogonal decomposition.  We will first show (1).  Let
\[
Z_2\buildrel \a'\over \rightarrow X \buildrel \b' \over \rightarrow Z_1 \buildrel \gamma'\over \rightarrow Z_2[1]
\]
be a distinguished triangle with $Z_i\in \NS_i$.  We must construct a map to this triangle from \eqref{eq:semiortho triangle} and show that this map is unique.  First, since $\Hom(Y_2,Z_1)\cong 0$ it follows that:
\begin{itemize}
\item post-composing with the map $\a':Z_2\rightarrow X$ gives an isomorphism $\Hom(Y_2,Z_2)\rightarrow \Hom(Y_2,X)$.
\item pre-composing with the map $\b:X\rightarrow Y_1$ gives an isomorphism $\Hom(Y_1,Z_1)\rightarrow \Hom(X,Z_1)$.
\end{itemize}
One sees this by applying $\Hom(Y_2,-)$ to the distinguished triangle $Z_2\to X\to Z_1\to $ and by applying $\Hom(-,Z_1)$ to the distinguished triangle $Y_2\to X\to Y_1\to$.  Thus there is a unique $f_2:Y_2\rightarrow Z_2$ such that $\a'\circ f_2 = \a$ and a uniqe $f_1:Y_1\rightarrow Z_1$ such that $f_1\circ \b = \b'$.  Thus, there is a unique map of triangles as claimed.  The uniqueness also implies that all such maps are isomorphisms of triangles.

Now we show (2).  The containment $\NS_2\subset \NS_1^\perp$ holds by the assumption of semiorthogonality.  To prove the opposite containment $\NS_1^\perp\subset \NS_2$, suppose $Z_2\in \NS_1^\perp$ is given.  We have a distinguished triangle
\[
Y_2\rightarrow Z_2\rightarrow Y_1\rightarrow Y_2[1]
\]
with $Y_i\in \NS_i$.   We want to show that $Y_2=Z_2$.  To do this it suffices to show that $Y_1=0$.  Apply the cohomological functor $\Hom(-,Y_1)$, and consider the resulting long exact sequence in cohomology:
\[
\Hom_\MS(Y_1,Y_1[k])\rightarrow  \Hom_{\MS}(Z_2,Y_1[k])\rightarrow \Hom_{\MS}(Y_2,Y_1[k])\rightarrow \Hom_{\MS}(Y_1,Y_1[k+1]).
\]
The second term is zero for all $k$ by hypothesis that $Z_2\in \NS_1^\perp$, and the third term is zero for all $k$ by semi-orthogonality between $\NS_1$ and $\NS_2$.  It follows that $\End_{\MS}(Y_1)\cong 0$, hence $Y_1=0$, hence $Z_2=Y_2\in \NS_2$, as claimed.

A similar argument shows that $\NS_1={}^\perp \NS_2$.
\end{proof}

The following is useful in determining the images and kernels of (co)unital idempotents.

\begin{lemma}\label{lemma:kernels and images}
Let $\{X_i\}$ and $\{Y_j\}$ be a collection of objects in $\MS$.  Let $\EB$ be a unital or counital idempotent in $\AS$, with complement $\EB^c$.  Assume that $\EB(X_i)\simeq 0$ for all $i$ and $\EB(Y_i)\simeq Y_j$ for all $j$.  Then:
\begin{enumerate}
\item $\EB(X)\simeq 0$ for all $X\in \ip{X_i}_i$.
\item $\EB(Y)\simeq Y$ for all $Y\in \ip{Y_j}_j$.
\item If $\{X_i\}_i$ and $\{Y_j\}_j$ together span all of $\CS$, then $\{X_i\}_i$ spans the kernel of $\EB$ while $\{Y_j\}_j$ spans the image of $\EB$.
\end{enumerate}
\end{lemma}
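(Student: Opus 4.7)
The plan is to dispatch parts (1) and (2) by showing that $\operatorname{ker}\EB$ and the essential image $\operatorname{im}\EB$ are full triangulated subcategories of $\MS$ closed under grading shifts, and then to deduce (3) by combining the exactness of $\EB$ and $\EB^c$ with the semi-orthogonal decomposition furnished by Corollary \ref{cor:semiortho from idempts}.

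For (1), I would use that $\AS$ acts on $\MS$ by exact endofunctors, so $\EB$ gives an exact endofunctor of $\MS$ commuting with grading shifts, whence $\operatorname{ker}\EB$ is a full triangulated subcategory closed under shifts. Since each $X_i$ lies in $\operatorname{ker}\EB$ by hypothesis, so does the graded triangulated hull $\ip{X_i}$. For (2), I would invoke Theorem \ref{thm:semiorthogonality}(2): applied to the semi-orthogonal decomposition $\MS\simeq (\operatorname{ker}\EB \to \operatorname{im}\EB)$ of Corollary \ref{cor:semiortho from idempts} (or its reverse in the unital case), it identifies $\operatorname{im}\EB$ with $(\operatorname{ker}\EB)^\perp$ or its left version, which is automatically triangulated and shift-closed and hence contains $\ip{Y_j}$.

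The heart of the argument is part (3). By Lemma \ref{lemma:idemp triangles}, the complement $\EB^c$ is itself a (co)unital idempotent, so it too acts as an exact endofunctor of $\MS$. I would consider the full subcategory $\NS \subseteq \MS$ of those $Z$ satisfying both $\EB(Z)\in \ip{Y_j}$ and $\EB^c(Z)\in \ip{X_i}$, and show $\NS=\MS$. On the generators this is clear: $\EB(X_i)\simeq 0$ and $\EB(Y_j)\simeq Y_j$ are given, while $\EB^c(X_i)\simeq X_i$ and $\EB^c(Y_j)\simeq 0$ follow from the idempotent triangle $\EB\to \IB \to \EB^c$ applied to $X_i$ and $Y_j$, together with $\EB\EB^c\simeq 0\simeq \EB^c\EB$. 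Exactness of $\EB$ and $\EB^c$ then makes $\NS$ closed under distinguished triangles and grading shifts, so by the spanning hypothesis $\NS \supseteq \ip{X_i,Y_j}=\MS$. Finally, for $Z\in \operatorname{ker}\EB$ the idempotent triangle identifies $Z$ with $\EB^c(Z)\in \ip{X_i}$ (since $\EB(Z)\simeq 0$), giving $\operatorname{ker}\EB\subseteq \ip{X_i}$; the containment $\operatorname{im}\EB\subseteq \ip{Y_j}$ is proved dually by writing $Z\simeq \EB(Z)$ for $Z\in\operatorname{im}\EB$.

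The only nontrivial bookkeeping is the unital versus counital distinction, which reverses the direction of the idempotent triangle and the orientation of the semi-orthogonal decomposition. I plan to finesse this uniformly by appealing only to the symmetric relations $\EB\EB^c\simeq 0\simeq \EB^c\EB$ and to the existence of a distinguished triangle with vertices $\EB$, $\IB$, $\EB^c$ in some rotation, both of which are insensitive to the variant.
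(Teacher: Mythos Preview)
Your proof is correct and follows essentially the same strategy as the paper's. The paper's version is terser: it notes that (1) follows by the five lemma (equivalent to your observation that $\operatorname{ker}\EB$ is triangulated and shift-closed), reduces (2) to (1) applied to $\EB^c$ rather than invoking Theorem \ref{thm:semiorthogonality}, and for (3) argues directly that any $Z$ is an iterated cone on $X_i$'s and $Y_j$'s, so $\EB(Z)$ is an iterated cone on $Y_j$'s. Your packaging via the subcategory $\NS=\{Z:\EB(Z)\in\ip{Y_j}\text{ and }\EB^c(Z)\in\ip{X_i}\}$ is a clean way to make that induction explicit, and your treatment of the unital/counital symmetry is a nice touch. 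The only place you take a genuinely different route is in (2), where the paper's reduction to (1) for $\EB^c$ is more elementary than your appeal to the $(-)^\perp$ characterization; both are valid, but the paper's avoids any dependence on Theorem \ref{thm:semiorthogonality}.
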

\begin{proof}
Statements (1) follows by the five lemma, and (2) is equivalent to (1) for the complement $\EB^c$.  Now, for (3), let us assume for the sake of concreteness that $\EB$ is a counital idempotent with counit $\e:\EB\rightarrow \IB$.  Now, let $Z\in \MS$ be given.  We may express $Z$ as an iterated cone involving various $X_i$ and $Y_j$ (with shifts).  But $\EB$ annihilates each $X_i$, which implies that $\EB(Z)$ can be expressed as some iterated cone involving $\EB(Y_j)\simeq Y_j$, since $\EB$ is assumed to be exact.  It follows that the essential image of $\EB$ is spanned by the $Y_j$.  The same argument applied to $\EB^c$ shows that the kernel of $\EB$ is spanned by the $X_i$.
\end{proof}

\section{Semi-orthogonal decompositions of the Hecke category}
\label{sec:semiortho}

\subsection{Two adjoints} 
\label{subsec:twoadjoints}

Let $\CS_\one\subset \SBim_n$ denote the full subcategory generated by the trivial bimodule $\one=R$ and its shifts.   In this section we make the key observation that the inclusion $\iota:\CS_\one\rightarrow \SBim_n$ has both a left adjoint and a right adjoint, given by $\HH_0$ and $\HH^0$.

First, for each $B\in \SBim_n$ we will identify $\HH^0(B)$ with the sub-bimodule of $B$ consisting of elements $b\in B$ with $fb=bf$ for all $f\in R$.  This is free as a graded $R$-module, hence is isomorphic to a finite direct sum of shifted copies of $R$. Thus, $\HH^0$ can be thought of as a functor from $\SBim_n\rightarrow \CS_\one$, or as an endofunctor of $\SBim_n$.  The inclusion $\HH^0(B)\subset B$ defines a natural transformation $\e:\HH^0\rightarrow \Id_{\SBim_n}$.

Also, for each $B$ recall that $\HH_0(B)$ is the quotient of $B$ by the sub-bimodule spanned by elements of the form $fb - bf$ for all $b\in B$ and all $f\in R$.  This, too, is free as a graded $R$-module, hence can be regarded as an object of $\CS_\one$.  The projection $B\rightarrow \HH_0(B)$ defines a natural transformation $\eta:\Id_{\SBim_n}\rightarrow \HH_0$.

\begin{lemma}
We have the following:
\begin{enumerate}
\item the map $\e:\HH^0\rightarrow \Id_{\SBim_n}$ makes $\HH^0$ into a counital idempotent endofunctor of $\SBim_n$ with image $\CS_\one$. 
\item the map $\eta:\Id_{\SBim_n}\rightarrow \HH_0$ makes $\HH_0$ into a unital idempotent endofunctor of $\SBim_n$ with image $\CS_\one$.
\end{enumerate}
Furthermore, when regarded as functors $\SBim_n\rightarrow \CS_\one$, $\HH^0$ and $\HH_0$ are right and left adjoints to the inclusion $\CS_\one\rightarrow \SBim_n$, respectively.
\end{lemma}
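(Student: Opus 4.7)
The plan is to establish the two adjunctions first and then invoke Lemma~\ref{lemma:idempotents yield adjunctions} to upgrade them to (co)unital idempotent structures.

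Step one is to verify that $\HH^0(B)$ and $\HH_0(B)$ genuinely lie in $\CS_\one$ for every $B\in \SBim_n$. Both are symmetric bimodules essentially by construction: for $\HH^0(B)$ the defining condition $x_ib=bx_i$ says that the left and right $R$-actions coincide, while for $\HH_0(B)$ we have explicitly modded out by $[R,B]$. That they are free as graded $R$-modules then follows from Corollary~\ref{co:HHfree}, together with the isomorphism $\HH_0(B)\cong \HH^n(B)(-2n)$ of Lemma~\ref{lemma:H homology is cohomology}. Consequently both sit inside $\CS_\one$ as finite direct sums of shifted copies of $\one$.

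Step two is to establish the adjunctions by chasing the universal properties directly. For $M\in \CS_\one$ and any bimodule map $\phi\colon M\to B$, the symmetry of $M$ gives
\[
f\phi(m)=\phi(fm)=\phi(mf)=\phi(m)f
\]
for all $f\in R$ and $m\in M$, so $\phi$ factors uniquely through the inclusion $\HH^0(B)\hookrightarrow B$. Since $\CS_\one$ is a full subcategory, this produces a natural bijection
\[
\Hom_{\SBim_n}(M,B)\;\cong\;\Hom_{\CS_\one}(M,\HH^0(B)),
\]
exhibiting $\HH^0$ as right adjoint to $\iota$. Dually, for $\psi\colon B\to N$ with $N\in \CS_\one$, the symmetry of $N$ forces $\psi(fb-bf)=0$, so $\psi$ factors uniquely through the quotient $B\twoheadrightarrow \HH_0(B)$, giving $\HH_0$ as left adjoint to $\iota$.

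Step three simply applies the second half of Lemma~\ref{lemma:idempotents yield adjunctions}. The right adjoint $\HH^0$ makes $\iota\circ\HH^0$ into a colocalization endofunctor of $\SBim_n$, i.e.\ a counital idempotent whose counit is the natural inclusion $\HH^0(B)\hookrightarrow B$. Similarly, the left adjoint $\HH_0$ makes $\iota\circ\HH_0$ into a localization endofunctor, i.e.\ a unital idempotent whose unit is the projection $B\twoheadrightarrow \HH_0(B)$. The essential image in each case is exactly $\CS_\one$ by step one together with $\HH^0(\one)=\one=\HH_0(\one)$.

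I do not anticipate any serious obstacle: the only content beyond formal category theory is the freeness of $\HH^0(B)$ and $\HH_0(B)$, which has already been proved in Corollary~\ref{co:HHfree}. As a sanity check the idempotent identities can also be verified by hand: $\HH^0(\HH^0(B))=\HH^0(B)$ because $\HH^0(B)$ is already symmetric, and $\HH_0(\HH_0(B))=\HH_0(B)$ because $\HH_0(B)$ has no nontrivial commutators, so both composites $\HH^0\e, \e\HH^0$ and $\HH_0\eta, \eta\HH_0$ are literal identity maps.
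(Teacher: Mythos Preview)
Your proof is correct. The only difference from the paper's argument is the order of the two deductions: the paper first verifies by hand that $\e_{\HH^0(B)}$ and $\HH^0(\e_B)$ are isomorphisms (your ``sanity check''), thereby establishing the counital idempotent structure directly, and then invokes the forward direction of Lemma~\ref{lemma:idempotents yield adjunctions} to obtain the adjunction; you instead establish the adjunction first via the universal properties of kernel and cokernel and then invoke the converse direction of the same lemma. Both routes are equally short and rely on the same ingredients (Corollary~\ref{co:HHfree} for freeness, fullness of $\CS_\one$, and Lemma~\ref{lemma:idempotents yield adjunctions}), so this is a matter of taste rather than a genuinely different argument.
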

\begin{proof}
It is clear that if $B$ is a direct sum of shifted copies of $R$, then $\HH^0(B) = B$ and $\e_B:\HH^0(B)\rightarrow B$ is the identity map.  On the other hand, $\HH^0(B)\in \CS_\one$ for all $B\in \SBim_n$, so the image of $\HH^0$ is $\CS_\one$.  A moment's thought confirms that $\e_{\HH^0(B)}$ and  $\HH^0(\e_{B})$ are natural isomorphisms $\HH^0(\HH^0(B))\buildrel\cong\over\rightarrow \HH^0(B)$.  This proves (1).  The proof of (2) is similar.

The fact that $\HH^0$ and $\HH_0$ define  right and left adjoints to the inclusion of $\CS_\one\rightarrow \SBim_n$ now follows from general properties of (co)unital idempotent endofunctors. 
\end{proof}




\begin{example}
In case $n=2$, $\HH^0$ sends $R\mapsto R$ and $B_s\mapsto R(1)$, while $\HH_0$ sends $R\mapsto R$ and $B_s\mapsto R(-1)$.
\end{example}

\subsection{Triangulated perspective on $\HH^0$ and $\HH_0$}
\label{subsec:absolute semiortho decomp}

We already know that the \emph{images} of the idempotent functors $\HH^0$, $\HH_0$ are both $\CS_\one$.   What about the kernels?  To answer this extend the functors $\HH^0$ and $\HH_0$ to complexes.  For each $\XB\in \KC^b(\SBim_n)$, we have:
\begin{enumerate}
\item $\HH^0(X)\subset X$ is the subcomplex consisting of those homogeneous elements $b$ with $fb=bf$ for all $f\in R$.
\item $\HH_0(X)$ is the quotient of $X$ by the subcomplex spanned by elements $fb-bf$ for all homogeneous $b\in X$, $f\in R$.
\end{enumerate}

Following Definition \ref{def:complement}, we define endofunctors $Q^{\pm}:\KC^b(\SBim_n)$ which are complementary to $\HH_0,\HH^0$, as follows. For each $X\in \KC^b(\SBim_n)$ define
\[
Q^-(X):=\Cone(\HH^0(X)\rightarrow X),
\]
\[
Q^+(X):=\Cone(B\rightarrow \HH_0(X))[-1].
\]
In order to use the setup of \S \ref{subsec:semiortho}, one should set $\MS:=\KC^b(\SBim_n)$ and $\AS:=\KC^b(\End(\SBim_n))$.  Then $\HH_0$ is a unital idempotent in $\End(\SBim_n)\subset \AS$, and $Q^+$ is its complement.  Similarly $\HH^0$ is a counital idempotent in $\AS$, and $Q^-$ is its complement.

\begin{example}
In case $n=2$, recall that $\HH_0$ sends $R\mapsto R$ and $B_s\mapsto R(1)$.  The unit of $\HH_0$ consists of the identity map $R\rightarrow R$ and the dot map $B_s\rightarrow R(1)$.  Thus, the complementary idempotent $Q^+$ sends $R$ to the contractible complex $\Cone(\Id_R)$, and sends $B_s$ to $\rouq_s$, the Rouquier complex associated to $s$.

A similar computation shows that $Q^-$ sends $R$ to a contractible complex and $B_s$ to $\rouq_s\inv$.
\end{example}





Next we intend to use Lemma \ref{lemma:kernels and images} to describe the images and kernels of $\HH^0,\HH_0$.

\begin{lemma}
\label{lem: negative and pos tw}
For all $w\neq 1$ one has $\HH^0(\rouq_w\inv)\simeq 0$ and $\HH_0(\rouq_w)\simeq 0$.  
\end{lemma}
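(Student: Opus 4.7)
The plan is to reduce both statements to a single one via rigid duality, then prove it by induction on $n$.

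\emph{Duality reduction.} Using Corollary \ref{cor:HH_0 as hom} (extended to complexes as in \S\ref{subsec:KR homology}) together with the identification $\rouq_w^\vee \simeq \rouq_w^{-1}$, one has
\[
\HH_0(\rouq_w) \ \cong \ \Homc(\rouq_w,R)^\vee \ \cong \ \Homc(\one,\rouq_w^{-1})^\vee \ = \ \HH^0(\rouq_w^{-1})^\vee.
\]
Since $\HH^0$ sends complexes of Soergel bimodules to complexes of finitely generated free graded $R$-modules (cf.\ \S\ref{subsec:twoadjoints}), on which dualization is an auto-equivalence preserving contractibility, it suffices to prove $\HH^0(\rouq_w^{-1}) \simeq 0$ for all $w \neq 1$.

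\emph{Induction on $n$.} The base case $n=1$ is vacuous. For $n \geq 2$, fix $w \in S_n$ with $w \neq 1$, and split on the value $w(n)$. If $w(n) = n$, then $w \in S_{n-1}$ and every reduced expression for $w$ avoids $s_{n-1}$, so $\rouq_w^{-1} \simeq Y \sqcup \one_1$ for some $Y \in \KC^b(\SBim_{n-1})$. Applying (\ref{eq:trivialMarkovUpper}) termwise and using $\HH^{-1} \equiv 0$ gives $\HH^0(\rouq_w^{-1}) \simeq \HH^0(Y) \sqcup \one_1$, which is contractible by the inductive hypothesis.

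Otherwise $w(n) = j < n$. The parabolic coset decomposition for $S_n/S_{n-1}$ writes $w$ reducedly as $w = c_j u$, where $c_j := s_j s_{j+1} \cdots s_{n-1}$ is the minimal-length representative of the coset $w S_{n-1}$ and $u := c_j^{-1} w \in S_{n-1}$, with $\ell(w) = (n-j) + \ell(u)$. Correspondingly,
\[
\rouq_w^{-1} \ \simeq \ \rouq_u^{-1} \otimes \rouq_{n-1}^{-1} \otimes \bigl(\rouq_{n-2}^{-1} \otimes \cdots \otimes \rouq_j^{-1}\bigr),
\]
where both outer tensor factors lie in $\KC^b(\SBim_{n-1})$, so Corollary \ref{cor:negative stabilization} immediately yields $\HH^0(\rouq_w^{-1}) \simeq 0$.

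The only nontrivial ingredient is the combinatorial observation in the second case: whenever $w$ does not fix $n$, parabolic coset theory supplies a reduced word for $w$ of the shape $(\text{word in } s_1, \ldots, s_{n-2}) \cdot s_{n-1} \cdot (\text{word in } s_1, \ldots, s_{n-2})$, which is exactly what is needed to apply Corollary \ref{cor:negative stabilization}. Modulo that standard input and the duality reduction at the outset, the rest is routine bookkeeping with Markov moves and the rigid structure on $\KC^b(\SBim_n)$.
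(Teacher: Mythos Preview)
Your proof is correct and follows essentially the same route as the paper: both arguments reduce to Corollary~\ref{cor:negative stabilization} by finding a reduced expression for $w$ in which the highest-index simple reflection appears exactly once. The paper does this in one stroke (choose $k$ maximal with $s_k$ appearing, write $F_w^{\pm 1}=A\otimes F_k^{\pm 1}\otimes B$ with $A,B\in\KC^b(\SBim_k)$), while you package the same observation as an induction on $n$ via the parabolic coset decomposition; your treatment of the ``trivial Markov'' case is slightly more explicit than the paper's. One small remark: your duality reduction at the outset is valid but unnecessary, since Corollary~\ref{cor:negative stabilization} already gives both $\HH^0(F_w^{-1})\simeq 0$ and $\HH_0(F_w)\simeq 0$ directly, and the paper simply invokes both halves in parallel.
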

\begin{proof}
If $w\neq 1$, we can find an integer $1\leq k\leq n-1$ and a minimal length representative of $w$ which contains a unique copy of $s_{k}$ and no $s_j$ for $j>k$, so that $\rouq_w^{\pm 1}=A\otimes \rouq_k^{\pm 1}\otimes B$ for $A, B\in \KC^b(\SBim_{k})$. Then Corollary \ref{cor:negative stabilization} gives the desired relations.
\end{proof}

\begin{definition}\label{def:TScats}
Let $\TS^{+}:=\ip{\rouq_w}_{w\neq 1}$ and  $\TS^{-}:=\ip{\rouq_w\inv}_{w\neq 1}$ be the full triangulated subcategories  of $\KC^b(\SBim)$ generated by $\rouq_w$ (resp.~$\rouq_w\inv$) and their shifts, where $w$ ranges over all non-identity elements of $S_n$.
\end{definition}

\begin{theorem}
\label{th: semiortho 1}
The kernels of $\HH^0$ and $\HH_0$, regarded as endofunctors $\KC^b(\SBim_n)\rightarrow \KC^b(\SBim_n)$ are $\TS^{-}$ and $\TS^+$ respectively.  That is to say, we have two semi-orthogonal decompositions
\begin{eqnarray*}
\KC^b(\SBim_n) & \simeq & (\TS^-\rightarrow \ip{\one})\\
& \simeq & (\ip{\one}\rightarrow \TS^+) .
\end{eqnarray*}
in which the projections onto $\ip{\one}$ are $\HH^0$ and $\HH_0$, respectively.
\end{theorem}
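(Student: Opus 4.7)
The plan is to apply Lemma \ref{lemma:kernels and images} together with the generation result of Proposition \ref{prop:rouquier generation}. The only nontrivial inputs are already in hand: we know how $\HH^0$ and $\HH_0$ act on the trivial bimodule, and Lemma \ref{lem: negative and pos tw} controls how they act on nontrivial Rouquier complexes.

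First I would set up the categorical framework. Take $\MS = \KC^b(\SBim_n)$ and $\AS = \KC^b(\End(\SBim_n))$ as in Section \ref{subsec:semiortho}. By the discussion preceding the theorem, $\HH^0$ is a counital idempotent endofunctor of $\SBim_n$, and its extension to complexes remains a counital idempotent; its complement is $Q^- = \Cone(\HH^0 \to \Id)$. Similarly $\HH_0$ is a unital idempotent with complement $Q^+$. By Corollary \ref{cor:semiortho from idempts}, each gives a semi-orthogonal decomposition of $\MS$ as soon as we identify the relevant images and kernels.

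Next I would identify the image. For every object $B$ in $\ip{\one}$, the action of $\HH^0$ and $\HH_0$ is the identity, so $\ip{\one}$ is contained in both essential images. Since any object in the essential image of $\HH^0$ or $\HH_0$ is a complex whose terms already lie in $\CS_{\one}$ (by Corollary \ref{co:HHfree} and its analogue for $\HH_0$), the images coincide with $\ip{\one}$. Now I would identify the kernels using Lemma \ref{lemma:kernels and images}: take the collections $\{\one\}$ together with $\{F_w\}_{w \neq 1}$, which by Proposition \ref{prop:rouquier generation} span all of $\KC^b(\SBim_n)$. Lemma \ref{lem: negative and pos tw} gives $\HH_0(F_w) \simeq 0$ for each $w \neq 1$, and $\HH_0(\one) \simeq \one$. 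Part (3) of Lemma \ref{lemma:kernels and images} then forces the kernel of $\HH_0$ to be exactly $\TS^+ = \ip{F_w}_{w \neq 1}$, and dually the essential image is $\ip{\one}$. Running the same argument with the negative Rouquier complexes $\{F_w^{-1}\}_{w \in S_n}$, which also span by Proposition \ref{prop:rouquier generation}, identifies the kernel of $\HH^0$ with $\TS^-$.

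Finally, to obtain the semi-orthogonal decompositions in the stated form, I would invoke Corollary \ref{cor:semiortho from idempts}. The counital idempotent $\HH^0$ produces the decomposition $\MS \simeq (\ker \HH^0 \to \operatorname{im}\HH^0) = (\TS^- \to \ip{\one})$, with projection onto $\ip{\one}$ given by $\HH^0$ by construction; the unital idempotent $\HH_0$ produces $\MS \simeq (\operatorname{im}\HH_0 \to \ker \HH_0) = (\ip{\one} \to \TS^+)$, with projection onto $\ip{\one}$ given by $\HH_0$. I do not expect a real obstacle here, since the hard computational content is already packaged in Lemma \ref{lem: negative and pos tw} and in Proposition \ref{prop:rouquier generation}; the only subtlety is the bookkeeping to confirm that $\HH^0$ and $\HH_0$ really do satisfy the hypotheses of Lemma \ref{lemma:kernels and images} at the level of complexes (i.e.~they are exact endofunctors whose action on the spanning set is controlled), which follows from their description as idempotent endofunctors extended termwise to $\KC^b(\SBim_n)$.
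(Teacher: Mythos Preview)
Your proposal is correct and follows essentially the same approach as the paper's own proof: both use Proposition~\ref{prop:rouquier generation} to get a spanning set, Lemma~\ref{lem: negative and pos tw} to control the action of $\HH^0$ and $\HH_0$ on nontrivial Rouquier complexes, and Lemma~\ref{lemma:kernels and images} to conclude. The paper is somewhat terser (it lets Lemma~\ref{lemma:kernels and images} identify image and kernel simultaneously rather than arguing for the image separately), but the logic is identical.
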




\begin{proof}
The triangulated category $\KC^b(\SBim_n)$  is spanned by $\{\rouq_w\inv\}_{w\in S_n}$.  Now, since $\HH^0$ annihilates $\rouq_w\inv$ for $w\neq 1$ (Lemma \ref{lem: negative and pos tw}) and fixes $\rouq_1=\one$, it follows from Lemma \ref{lemma:kernels and images} that $\ip{\one}$ and $\TS^-$ are the image and kernel of $\HH^0$, respectively.  

A similar argument shows that $\ip{\one}$ and $\TS^+$ are the image and kernel of $\HH_0$.  This yields the semi-orthogonal decompositions in the statement  and completes the proof.



\end{proof}

\begin{remark}
Theorem \ref{th: semiortho 1} can be regarded as a categorification of Lemma \ref{lem: trace as coefficient at 1}.
Indeed, $\HH^0$ is a counital idempotent functor  whose image is $\ip{\one}$ and whose kernel is $\ip{\rouq_w\inv}_{w\neq 1}$. This is a categorification of the fact that the coefficient of $a^0$ in the Jones-Ocneanu trace picks out the coefficient of 1 in the $\{\rouq_w\inv\}$-expansion of $x\in \mathbb{H}_n$.

On the other hand, $\HH_0$ is a unital dempotent endofunctor with image $\ip{\one}$ and kernel $\ip{\rouq_w}_{w\neq 1}$.  
Since $\HH_0(X)\cong \HH^n(X)(-2n)$, this yields a categorification of the fact that the coefficient of $a^n$ in Jones-Ocneanu trace picks out the coefficient of 1 in the $\{\rouq_w\}$-expansion of $x\in \mathbb{H}_n$.
\end{remark}



\subsection{Partial trace functors}
\label{subsec:partial traces}
We introduce some intermediate categories between $\SBim_n$ and $\CS_\one$.  For each parabolic subgroup $P:=S_{m_1}\times\cdots\times S_{m_r}\subset S_n$, let $\SBim_{m_1,\ldots,m_r} = \SBim(P)$ denote the full subcategory of $\SBim_n$ spanned by the bimodules $B_w$ with $w\in P$ and their shifts. Note that $\SBim_{1,\ldots,1} =\CS_{\one}$.
 We regard $\SBim_{m_1,\ldots,m_r}$ as a full monoidal subcategory of $\SBim_n$, and so $\SBim_n$ can be thought of as a \emph{bimodule category} over $\SBim_{m_1,\ldots,m_r}$.

\begin{definition}\label{def:ptr}
For $M\in \SBim_n$ define $\pi^-(M)$ and $\pi^+(M)$ to be the kernel and cokernel of $x_n\otimes 1 - 1\otimes x_n$ acting on $M$, respectively.
\end{definition}

The following is obvious.

\begin{lemma}\label{lemma:tr is bilinear}
The functors $\pi^\pm$ are $\SBim_{n-1,1}$-bilinear, in the sense that
\[
\pi^{\pm}(X\otimes Y\otimes Z) \cong X\otimes \pi^{\pm}(Y)\otimes Z
\]
for all $X,Z\in \SBim_{n-1,1}$ and all $Y\in \SBim_n$; this isomorphism is natural in $X,Y,Z$.\qed
\end{lemma}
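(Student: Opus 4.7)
The plan is to reduce the computation of $\pi^\pm$ on a triple tensor product to its action on the middle factor by exploiting the fact that $x_n$ acts centrally on objects of $\SBim_{n-1,1}$.

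First I would establish the following centrality claim: for every $X \in \SBim_{n-1,1}$ and every homogeneous $m \in X$, one has $x_n m = m x_n$. This is the crucial input. It suffices to verify it on generators, i.e.~on the Bott--Samelson bimodules $B_{i_1}\otimes\cdots\otimes B_{i_r}$ with each $i_j \le n-2$. For any such $i \le n-2$, the variable $x_n$ lies in the ring of invariants $R^{s_i}$, so in $B_i = R\otimes_{R^{s_i}} R(1)$ we have $x_n(f\otimes g) = (x_n f)\otimes g = f\otimes (x_n g) = (f\otimes g)x_n$. The same property then propagates through tensor products over $R$ and through direct summands, covering all of $\SBim_{n-1,1}$.

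Next I would unpack what the operator $T := x_n\otimes 1 - 1\otimes x_n$ does on the triple tensor product $X\otimes Y\otimes Z$ for $X,Z\in \SBim_{n-1,1}$ and $Y\in \SBim_n$. Here $x_n\otimes 1$ acts by left multiplication (through the leftmost factor $X$) and $1\otimes x_n$ by right multiplication (through $Z$). Using centrality of $x_n$ on $X$ and then the balancing identity in $\otimes_R$, one rewrites
\[
(x_n x)\otimes y\otimes z = (x\, x_n)\otimes y\otimes z = x\otimes (x_n y)\otimes z,
\]
and symmetrically $x\otimes y\otimes (z x_n) = x\otimes (y x_n)\otimes z$. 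Subtracting yields
\[
T \;=\; \Id_X \otimes T_Y \otimes \Id_Z,
\]
where $T_Y$ denotes the corresponding operator on $Y$ alone.

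Finally I would invoke the fact that every Soergel bimodule, and therefore every $X,Z\in \SBim_{n-1,1}$, is free (hence flat) as both a left and a right $R$-module; this follows from the Bott--Samelson presentation, and is used in the paper already (cf.~Lemma \ref{lemma:free homs} and Corollary \ref{co:HHfree}). Consequently, tensoring with $X$ on the left and $Z$ on the right is exact on both sides, so it commutes with taking kernels and cokernels. Applying this to $T = \Id_X\otimes T_Y\otimes \Id_Z$ yields
\[
\pi^-(X\otimes Y\otimes Z) \cong X\otimes \ker(T_Y)\otimes Z = X\otimes \pi^-(Y)\otimes Z,
\]
and likewise for cokernels, giving the statement for $\pi^+$. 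Naturality in $X,Y,Z$ is immediate from the construction since every step is a natural identification of $R$-bilinear maps. The only subtle point in the argument is the centrality observation in the first paragraph; the rest is formal, so I do not anticipate any real obstacle.
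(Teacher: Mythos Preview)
Your argument is correct and is precisely the verification one would carry out; the paper itself gives no proof, labeling the lemma as obvious and ending the statement with a \qed. Your centrality-of-$x_n$ plus flatness argument is the natural way to unpack that ``obvious,'' so there is nothing to compare.
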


\begin{lemma}\label{lemma:image of ptr}
If $M\in \SBim_n$, then $\pi^{\pm}(M)$ is an object of $\SBim_{n-1,1}$, so $\pi^{\pm}$ are well-defined functors $\SBim_n\rightarrow \SBim_{n-1,1}$.
\end{lemma}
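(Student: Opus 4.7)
The plan is to reduce the statement to a computation on Bott--Samelson bimodules and then split into two cases depending on whether $B_{n-1}$ appears as a tensor factor. The functors $\pi^\pm$ are by construction the kernel and cokernel of the $R$-bimodule endomorphism $x_n\otimes 1 - 1\otimes x_n$ of $M$; they therefore preserve direct sums, commute with grading shifts $(1)$, and send retracts to retracts. Since $\SBim_{n-1,1}$ is closed under direct sums, shifts and retracts, and since every object of $\SBim_n$ is a retract of a finite direct sum of shifts of Bott--Samelson bimodules, it suffices to verify that $\pi^\pm(M) \in \SBim_{n-1,1}$ when $M$ is a Bott--Samelson bimodule $B_{i_1}\otimes\cdots\otimes B_{i_r}$.

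Next, I would invoke the reduction already used in the proof of Corollary~\ref{co:HHfree}: the isomorphisms \eqref{eq: sts} and \eqref{eq: ss} imply that any Bott--Samelson is a summand of a direct sum of shifted Bott--Samelsons in which the index $n-1$ appears at most once among the $i_j$. It therefore suffices to treat the two remaining cases. In Case~A, where no $i_j$ equals $n-1$, we have $M\in \SBim_{n-1,1}$ and every tensor factor has the form $B_i = R\otimes_{R^{s_i}} R(1)$ with $i\leq n-2$; since then $x_n\in R^{s_i}$, the element $x_n$ passes across each tensor factor, so $x_n\otimes 1 - 1\otimes x_n$ acts as zero on $M$ and hence $\pi^\pm(M)\cong M\in \SBim_{n-1,1}$. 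In Case~B, where exactly one $i_j$ equals $n-1$, we can write $M = X\otimes B_{n-1}\otimes Y$ with $X,Y\in \SBim_{n-1,1}$, and Lemma~\ref{lemma:tr is bilinear} gives $\pi^\pm(M) \cong X\otimes \pi^\pm(B_{n-1})\otimes Y$. Thus the entire claim reduces to computing $\pi^\pm(B_{n-1})$ and showing the result lies in $\CS_{\one}\subset \SBim_{n-1,1}$.

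The remaining (and main) task is therefore the direct calculation of $\pi^\pm(B_{n-1})$. Using the left $R$-basis $\{1\otimes 1,\ 1\otimes x_n\}$ of $B_{n-1}$ and the commutation rule coming from the symmetry of $x_n+x_{n-1}$ and $x_nx_{n-1}$ across the tensor product over $R^{s_{n-1}}$, one writes out the matrix of $x_n\otimes 1 - 1\otimes x_n$ as a left $R$-linear endomorphism and observes that its two columns are proportional. This shows both the kernel and the cokernel are free of rank one over $R$, generated (in $\pi^-$) by $1\otimes x_n - x_{n-1}\otimes 1$ and (in $\pi^+$) by the class of $1\otimes 1$. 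A short check, again using that $x_{n-1}+x_n$ and $x_{n-1}x_n$ are symmetric, confirms that on each of these generators the left and right $R$-actions coincide; in particular $\pi^\pm(B_{n-1})$ is a shift of the trivial bimodule $\one = R$ and therefore lies in $\CS_{\one}\subset \SBim_{n-1,1}$. The only potentially delicate point is this last bimodule-structure bookkeeping: one has to verify not only that the kernel and cokernel are free as left $R$-modules, but also that the induced right $R$-action agrees with the left action on the generator, so that $\pi^\pm(B_{n-1})$ genuinely belongs to the smaller category $\SBim_{n-1,1}$ rather than being an exotic bimodule.
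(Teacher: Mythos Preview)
Your argument is correct and follows essentially the same route as the paper's proof: reduce to Bott--Samelson bimodules with at most one $B_{n-1}$ factor, then use the $\SBim_{n-1,1}$-bilinearity of $\pi^\pm$ (Lemma~\ref{lemma:tr is bilinear}) together with the explicit computation $\pi^\pm(B_{n-1})\cong R(\pm 1)$. The paper simply declares this last computation an ``easy exercise'', whereas you carry it out in full; otherwise the two proofs are the same.
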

\begin{proof}
An easy exercise shows that $\pi^{\pm}(B_{n-1})\cong R(\pm 1)$ and $\pi^{\pm}(R)=R$, hence
\begin{equation}
\label{eq:partial trace of B 1}
\pi^\pm(X) = X
\end{equation}
and
\begin{equation}\label{eq:partial trace of B 2}
\pi^{\pm} (X\otimes B_{n-1} \otimes Y)\cong X\otimes Y(\pm 1)
\end{equation}
for all  $X,Y\in \SBim_{n-1,1}$.  Every indecomposable Soergel bimodule appears as a direct summand of some Bott-Samelson bimodule in which $B_{n-1}$ appears at most once, so it follows that $\pi^\pm(Z)\in \SBim_{n-1,1}$ for all $Z\in \SBim_n$.
\end{proof}

Since $\pi^+(M)$ is a sub-bimodule of $M$, the inclusion $\pi^+(M)\hookrightarrow M$ defines a natural transformation $\e:\pi^+R\rightarrow \Id$.  Similarly, the projection $M\twoheadrightarrow \pi^-(M)$ defines a natural transformation $\eta:\Id\rightarrow \pi^-$.

\begin{proposition}\label{prop:partial trace as adjoints}
The functors $\pi^+$ and $\pi^-$ are the left and right adjoints to the inclusion $\SBim_{n-1,1}\hookrightarrow \SBim_n$.
\end{proposition}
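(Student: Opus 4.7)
The plan is to verify the two adjunctions directly on hom-sets, using the crucial observation that $x_n$ acts centrally on every object of $\SBim_{n-1,1}$.

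First, I would establish this centrality: for any $M \in \SBim_{n-1,1}$, the operator $x_n \otimes 1 - 1 \otimes x_n$ acts as zero on $M$. Since $\SBim_{n-1,1}$ is generated under tensor products, shifts, direct sums, and summands by the bimodules $B_{s_i}$ with $1 \le i \le n-2$, and since $x_n$ is symmetric under each such $s_i$ (it lies in $R^{s_i}$), the relation $x_n \otimes 1 = 1 \otimes x_n$ holds already on each generator $B_{s_i}$, and then propagates through all of the closure operations. As an immediate consequence, the natural maps $\pi^-(M) \hookrightarrow M$ and $M \twoheadrightarrow \pi^+(M)$ are both isomorphisms when $M \in \SBim_{n-1,1}$.

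For the right adjoint $\pi^-$, I would verify the hom-set bijection directly. Let $X \in \SBim_{n-1,1}$ and $Y \in \SBim_n$, and let $f \colon X \to Y$ be a bimodule map. For each homogeneous $x \in X$, centrality gives $x_n \cdot f(x) = f(x_n \cdot x) = f(x \cdot x_n) = f(x) \cdot x_n$, so $f(x)$ lies in $\pi^-(Y) = \ker(x_n \otimes 1 - 1 \otimes x_n)$. Hence $f$ factors uniquely through the inclusion $\pi^-(Y) \hookrightarrow Y$, yielding a natural isomorphism
\[
\Hom_{\SBim_n}(X,Y) \;\cong\; \Hom_{\SBim_{n-1,1}}(X,\pi^-(Y)).
\]
Combined with Lemma \ref{lemma:image of ptr} (which ensures the codomain really is in $\SBim_{n-1,1}$), this exhibits $\pi^-$ as the right adjoint to $\sigma$. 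The argument for $\pi^+$ is dual: given $X \in \SBim_n$ and $Y \in \SBim_{n-1,1}$, any $f \colon X \to Y$ satisfies $f\big((x_n \otimes 1 - 1 \otimes x_n)\cdot x\big) = 0$ since $x_n$ is central on $Y$, so $f$ factors uniquely through the quotient $X \twoheadrightarrow \pi^+(X)$, giving the natural bijection $\Hom_{\SBim_n}(X,Y) \cong \Hom_{\SBim_{n-1,1}}(\pi^+(X),Y)$.

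There is no serious obstacle here; the entire content is packaged into the centrality observation on generators, after which the universal properties of kernel and cokernel do the rest, and naturality in both variables is immediate. Alternatively, one could phrase the same argument in the language of Section \ref{subsec:adjointsgeneral} by checking that $\eta \colon \Id \to \pi^+$ and $\e \colon \pi^- \to \Id$ make $\pi^+$ and $\pi^-$ into a localization and colocalization functor on $\SBim_n$ respectively (using Step 1 to verify $\pi^\pm \eta$, $\eta\, \pi^\pm$ etc.~are isomorphisms), and then invoking Lemma \ref{lemma:idempotents yield adjunctions}.
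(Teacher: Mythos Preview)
Your proof is correct. The paper takes precisely the alternative route you sketch in your final paragraph: it observes that $(\pi^\pm)^2=\pi^\pm$ (which is your centrality observation in disguise), notes that the inclusion $\pi^-(M)\hookrightarrow M$ and projection $M\twoheadrightarrow\pi^+(M)$ make $\pi^-$ and $\pi^+$ into a colocalization and localization functor respectively, and then invokes Lemma~\ref{lemma:idempotents yield adjunctions}. Your direct hom-set verification is simply the content of that lemma unpacked in this particular case, so the two arguments are essentially the same.
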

\begin{proof}
It is clear that $\pi^+$ and $\pi^-$ are strict idempotent functors: $(\pi^{\pm})^2=\pi^\pm$.  In fact the inclusion $\pi^-(M)\rightarrow M$ makes $\pi^-$ into a localization functor and the projection $M\rightarrow \pi^+(M)$ makes $\pi^+$ into a colocalization functor.

Lemma \ref{lemma:idempotents yield adjunctions} then tells us that $\pi^-$ is right adjoint to the inclusion of its essential image $\SBim_{n-1,1}$, and similarly $\pi^+$ is left adjoint to the inclusion of its essential image, which again is $\SBim_{n-1,1}$.
\end{proof}

Since $\pi^+,\pi^-$ are additive functors $\SBim_n\rightarrow \SBim_{n-1,1}$ they can be extended to complexes.  Next we record how these functors interact with Rouquier complexes.  

\begin{lemma}\label{lemma:ptrRouquier}
We have the following ``Markov moves'' for $\pi^\pm$:
\begin{subequations}
\begin{equation}\label{eq:ptrMarkovZero}
\pi^+(X\otimes \rouq_{n-1} \otimes Y)\  \simeq  \ 0 \ \simeq \  \pi^-(X\otimes \rouq_{n-1}\inv \otimes Y)
\end{equation}
\begin{equation}\label{eq:ptrMarkovNonzero1}
\pi^+(X\otimes \rouq_{n-1}\inv \otimes Y) \cong \Cone\left(X\otimes Y(-1)\buildrel f\over\rightarrow X\otimes Y(1)\right),
\end{equation}
\begin{equation}\label{eq:ptrMarkovNonzero2}
\pi^-(X\otimes \rouq_{n-1} \otimes Y) \cong \Cone\left(X\otimes Y(-1)\buildrel f\over\rightarrow X\otimes Y(1)\right)[1]
\end{equation}
\end{subequations}
for every $X,Y\in \KC^b(\SBim_{n-1,1})$, where $f$ is ``middle multiplication'' by $x_{n-1}-x_n$.\qed
\end{lemma}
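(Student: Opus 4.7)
The plan is to use the $\SBim_{n-1,1}$-bilinearity of $\pi^\pm$ (Lemma \ref{lemma:tr is bilinear}), which extends termwise to complexes, giving natural isomorphisms
\[
\pi^\pm(X \otimes Z \otimes Y) \cong X \otimes \pi^\pm(Z) \otimes Y
\]
for $X, Y \in \KC^b(\SBim_{n-1,1})$ and $Z \in \KC^b(\SBim_n)$. Taking $Z = \rouq_{n-1}^{\pm 1}$ reduces all four identities to computing $\pi^\pm(\rouq_{n-1})$ and $\pi^\pm(\rouq_{n-1}^{-1})$ as two-term complexes in $\KC^b(\SBim_{n-1,1})$; the middle multiplication by $x_{n-1} - x_n$ on the right-hand sides of \eqref{eq:ptrMarkovNonzero1}--\eqref{eq:ptrMarkovNonzero2} will then come from the corresponding map $\pi^\pm(B_{n-1}) \to \pi^\pm(R(1))$.

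Next I would identify these four induced chain maps. From the proof of Lemma \ref{lemma:image of ptr}, $\pi^\pm(R) = R$ and $\pi^\pm(B_{n-1}) \cong R(\pm 1)$. Writing $b: B_{n-1} \to R(1)$ for the multiplication map $a \otimes c \mapsto ac$ and $b^*: R(-1) \to B_{n-1}$ for its Frobenius dual, a direct computation with the defining relations of $B_{n-1}$ shows that the class of $1 \otimes 1$ generates the cokernel $\pi^+(B_{n-1}) \cong R(1)$ and is sent by $b$ to $1 \in R(1)$; dually the element $\mu := 1 \otimes x_{n-1} - x_n \otimes 1$ lies in $\ker(x_n \otimes 1 - 1 \otimes x_n) \subset B_{n-1}$, generates $\pi^-(B_{n-1}) \cong R(-1)$ as an $R$-module, and satisfies $b(\mu) = x_{n-1} - x_n$. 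Consequently $\pi^+(b)$ and $\pi^-(b^*)$ are isomorphisms, while $\pi^+(b^*)$ and $\pi^-(b)$ are both multiplication by $x_{n-1} - x_n$.

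Assembling the pieces: the complexes $\pi^+(\rouq_{n-1})$ and $\pi^-(\rouq_{n-1}^{-1})$ are two-term complexes with invertible differential, hence contractible, giving \eqref{eq:ptrMarkovZero}. The remaining complexes $\pi^+(\rouq_{n-1}^{-1})$ and $\pi^-(\rouq_{n-1})$ have the same chain groups $R(-1)$ and $R(1)$ with the same differential $x_{n-1} - x_n$, but are placed in different cohomological degrees because the underlined (degree zero) term of $\rouq_{n-1}^{-1}$ is at the right end while that of $\rouq_{n-1}$ is at the left end, producing the relative $[1]$-shift between \eqref{eq:ptrMarkovNonzero1} and \eqref{eq:ptrMarkovNonzero2}. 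Restoring $X$ and $Y$ via the bilinearity isomorphism turns the scalar $x_{n-1} - x_n$ into middle multiplication as claimed. The main obstacle will be the explicit identification of $\mu$ as a generator of $\pi^-(B_{n-1})$ and the verification that $b(\mu) = x_{n-1} - x_n$: a somewhat finicky check with the $R^s$-tensor relations defining $B_{n-1}$, after which the rest is formal bookkeeping.
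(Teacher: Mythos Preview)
Your proposal is correct and is precisely the intended argument: the paper states the lemma with a bare \qed, leaving the reduction via $\SBim_{n-1,1}$-bilinearity and the explicit computation of $\pi^\pm$ on the two-term Rouquier complexes as an exercise, which you have carried out accurately (your generator $\mu$ of $\pi^-(B_{n-1})$ and the resulting identification of the differentials are correct up to the usual harmless choice of sign/normalization of $b^*$).
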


\begin{remark}
If we forget the action of $x_n$, then we obtain the more recognizable Markov moves $\pi^+(X\otimes \rouq_{n-1}\inv \otimes Y)\simeq X\otimes Y(-1)[1]$ and $\pi^-(X\otimes \rouq_{n-1}\otimes Y)\simeq X\otimes Y(1)[-1]$.  
\end{remark}
\begin{remark}
We refer to $\pTr^\pm$ as the \emph{partial trace functors}.  They can be regarded as functors $\SBim_{m,1,\ldots,1}\rightarrow \SBim_{m-1,1,1,\ldots,1}$ for all $1\leq m\leq n$.  Composing them yields
\[
\HH^0 = (\pi^-)^n : \SBim_n\rightarrow \SBim_{1,\ldots,1} = \CS_{\one} 
\]
and
\[
\HH_0 = (\pi^+)^n : \SBim_n\rightarrow \SBim_{1,\ldots,1} = \CS_{\one} 
\]
\end{remark}

\begin{remark}
Note that the inclusion $\SBim_{n-1}\rightarrow \SBim_{n-1,1}$ is not full: hom spaces in $\SBim_{n-1,1}$ are obtained from those in $\SBim_{n-1}$ by applying $-\otimes_\k \k[x_n]$. On the other hand, \emph{there is} a fully faithful functor on the level of homotopy categories $\sigma:\KC^b(\SBim_{n-1})\rightarrow \KC^b(\SBim_{n-1,1})$, sending
\[
X\mapsto \Cone\left( (X\sqcup \one_1) \buildrel x_n\over \longrightarrow (X\sqcup \one_1)\right).
\]
The Grothendieck groups of $\SBim_{n-1}$ and $\SBim_{n-1,1}$ are both naturally identified with $\mathbb{H}_{n-1}$, and $\sigma$ categorifies multiplication by $(1-q)$.

There is a forgetful functor $F:\SBim_{n-1,1}\rightarrow R_{n-1}\text{-bimod}$, where $R_{n-1}:=\k[x_1,\ldots,x_{n-1}]$.  Note that $F(B_w\sqcup \one_1)=B_w[x_n]$ for all $B_w\in \SBim_{n-1}$,  hence $F$ could be regarded (loosely speaking) as a categorification of multiplication by $\frac{1}{1-q}$.  In any case $F(\sigma(X))\simeq X$ for all $X\in \KC^b(\SBim_{n-1})$.

This explains how the apparently mysterious factor of $(1-q)$ in \eqref{eq:twoisos} is built-in to the categorical picture.
\end{remark}

\subsection{Relative semi-orthogonal decompositions}
\label{subsec:relative semiortho}
In the previous section we constructed idempotent endofunctors $\pi^{\pm}$ if $\SBim_n$ which project onto $\SBim_{n-1,1}$.  Now we would like to understand the semi-orthogonal decompositions they determine.  That is to say, we want to understand the kernels of these functors after extending to complexes $\pi^\pm:\KC^b(\SBim_n)\rightarrow \KC^b(\SBim_n)$.  To study the kernels, we define the complementary idempotents by the usual formulas (Definition \ref{def:complement}).  
\[
Q^-(X):=\Cone(\pi^-(X)\rightarrow X),\qquad\qquad Q^+(X):=\Cone(X\rightarrow \pi^+(X))[-1].
\]

\begin{definition}\label{def:relativeTpm}
Let $\US^\pm\subset \KC^b(\SBim_n)$ denote the full triangulated subcategory spanned by the Rouquier complexes $F_w^\pm$ with $w\in S_n \setminus (S_{n-1}\times S_1)$.  
Equivalently, $\US^\pm$ is the span of complexes of the form $X\otimes \rouq_{n-1}^\pm\otimes Y$ with $X,Y\in \KC^b(\SBim_{n-1,1})$.
\end{definition} 

\begin{theorem}\label{thm:relative semiortho}
The kernel of the idempotent endofunctor $\pi^{\pm}:\KC^b(\SBim_n)\rightarrow \KC^b(\SBim_n)$ is $\US^{\pm}$, so that we have semi-orthogonal decompositions
\begin{eqnarray*}
\KC^b(\SBim_n)
& \simeq & \left(\KC^b(\SBim_{n-1,1})\rightarrow \US^+\right)\\
& \simeq & \left(\US^-\rightarrow \KC^b(\SBim_{n-1,1}) \right).
\end{eqnarray*}
\end{theorem}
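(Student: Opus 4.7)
The strategy is to apply Lemma \ref{lemma:kernels and images} to the (co)unital idempotent endofunctors $\pi^{\pm}$ (extended termwise to complexes), using $\{F_w\}_{w\in S_n}$ for $\pi^+$ and $\{F_w\inv\}_{w\in S_n}$ for $\pi^-$ as generating families of $\KC^b(\SBim_n)$ (Proposition \ref{prop:rouquier generation}).  Once the kernels are identified as $\US^{\pm}$, the semi-orthogonal decompositions follow from Corollary \ref{cor:semiortho from idempts}, since by Proposition \ref{prop:partial trace as adjoints} the essential images of $\pi^{\pm}$ are already known to be $\KC^b(\SBim_{n-1,1})$.  I will describe the plan for $\pi^+$; the case of $\pi^-$ is identical after swapping $F_w\leftrightarrow F_w\inv$ and $F_{n-1}\leftrightarrow F_{n-1}\inv$, and using the other half of \eqref{eq:ptrMarkovZero}.

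The Markov move \eqref{eq:ptrMarkovZero} immediately gives $\pi^+(X\otimes F_{n-1}\otimes Y)\simeq 0$ for all $X,Y\in \KC^b(\SBim_{n-1,1})$, and since $\US^+$ is by definition the triangulated hull of such complexes, Lemma \ref{lemma:kernels and images}(1) yields $\US^+\subseteq \ker\pi^+$.  On the other hand, since $\pi^+$ is left adjoint to the fully faithful inclusion $\sigma\colon\SBim_{n-1,1}\hookrightarrow \SBim_n$, Lemma \ref{lemma:adjunctions yield idempotents} makes the counit $\pi^+\sigma\to \Id$ an isomorphism, so extending termwise gives $\pi^+(Z)\simeq Z$ for every $Z\in \KC^b(\SBim_{n-1,1})$.

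The crux is to show that $\US^+$ and $\KC^b(\SBim_{n-1,1})$ together span $\KC^b(\SBim_n)$.  By Proposition \ref{prop:rouquier generation} this reduces to placing each $F_w$ with $w\in S_n$ into one of the two subcategories.  If $w\in S_{n-1}$, any reduced expression for $w$ uses only $s_1,\ldots,s_{n-2}$, so $F_w\in \KC^b(\SBim_{n-1,1})$.  Otherwise $k:=w(n)<n$, and the standard minimal-length right coset representative factorization gives a reduced expression
\[
w\ =\ v\cdot s_{n-1}s_{n-2}\cdots s_k,\qquad v\in S_{n-1},
\]
so
\[
F_w\ \simeq\ F_v\otimes F_{n-1}\otimes F_{s_{n-2}}\otimes\cdots\otimes F_{s_k}\ =\ X\otimes F_{n-1}\otimes Y,
\]
with $X=F_v$ and $Y=F_{s_{n-2}}\otimes\cdots\otimes F_{s_k}$ both in $\KC^b(\SBim_{n-1,1})$ since only indices $\le n-2$ appear on either side.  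Hence $F_w\in \US^+$.

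Lemma \ref{lemma:kernels and images}(3) then identifies $\ker\pi^+=\US^+$ (and $\operatorname{im}\pi^+=\KC^b(\SBim_{n-1,1})$), and together with Corollary \ref{cor:semiortho from idempts} this delivers the semi-orthogonal decomposition $\KC^b(\SBim_n)\simeq(\KC^b(\SBim_{n-1,1})\to\US^+)$.  The main obstacle is the generation step, which hinges on the parabolic factorization in $S_n/S_{n-1}$; once every Rouquier complex outside the parabolic piece is exhibited as a pure tensor through $F_{n-1}$ with both sides supported in $\SBim_{n-1,1}$, the rest is a formal consequence of the idempotent machinery set up in Section \ref{sec:semiortho} and the Markov moves of Lemma \ref{lemma:ptrRouquier}.
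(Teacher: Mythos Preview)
Your proof is correct and follows essentially the same approach as the paper's: both apply Lemma \ref{lemma:kernels and images} to the generating family of Rouquier complexes, using the Markov move \eqref{eq:ptrMarkovZero} together with a reduced factorization $w=w's_{n-1}w''$ (with $w',w''\in S_{n-1}\times S_1$) for $w\notin S_{n-1}\times S_1$. Your explicit coset decomposition $w=v\cdot s_{n-1}s_{n-2}\cdots s_k$ is just a concrete instance of the paper's factorization, and the remaining appeals to Corollary \ref{cor:semiortho from idempts} and Proposition \ref{prop:partial trace as adjoints} match the paper's use of the idempotent machinery.
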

With respect to these semi-orthogonal decompositions $\pi^\pm$ can be described as the projection onto $\KC^b(\SBim_{n-1,1})$ with kernel $\US^{\pm}$. 
%

\begin{proof}
The proof is similar to the proof of Theorem \ref{th: semiortho 1}.  If $w\in S_n\setminus (S_{n-1}\times S_1)$, then $w$ can be presented as
\[
w = w's_{n-1} w''
\]
with $w',w''\in S_{n-1}\times S_1$.  For such $w$ we have
\[
\pi^+(F_w) \cong F_{w'}\otimes \pi^+(F_{n-1})\otimes F_{w''}\simeq 0
\]
and
\[
\pi^-(F_w\inv) \cong F_{w'}\inv\otimes \pi^+(F_{n-1}\inv)\otimes F_{w''}\inv\simeq 0.
\]

On the other hand, if $w\in S_{n-1}\times S_1$, then we have $F_w\in \KC^b(\SBim_{n-1,1})$, and so $\pi^\pm(F_w)\cong F_w$.  Thus, Lemma \ref{lemma:kernels and images} tells us that the image and kernel of $\pi^{\pm}:\KC^b(\SBim_n)\rightarrow \KC^b(\SBim_n)$ are $\KC^b(\SBim_{n-1,1})$ and $\US^\pm$, respectively.  This gives the desired semi-orthogonal decompositions and completes the proof.


\end{proof}


\section{Serre duality}
\label{sec:serre}
Let $\HT_n =\rouq_{w_0}$ denote the Rouquier complex associated to the half twist, and let $\FT_n:=\HT_n^{\otimes 2}$ denote the full twist.  When the index $n$ is understood, it will be omitted.

The purpose of this section is to prove the following.

\begin{theorem}
\label{thm:left right}
We have $\HH^0(\FT\inv\otimes X)\simeq \HH_0(X)$ and $\HH^0(X)\simeq \HH_0(\FT\otimes X)$ as complexes of $R$-modules, for all $X\in \KC^-(\SBim_n)$.
\end{theorem}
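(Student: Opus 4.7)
The plan is to use the two semi-orthogonal decompositions of Theorem \ref{th: semiortho 1}, $\KC^b(\SBim_n)\simeq (\TS^- \to \ip{\one})\simeq (\ip{\one} \to \TS^+)$, whose projections onto $\ip{\one}$ are $\HH^0$ and $\HH_0$ respectively.  Since the two stated isomorphisms of the theorem are interchanged by replacing $X$ with $\FT\inv\otimes X$, it suffices to prove the single statement $\HH^0(X)\simeq \HH_0(\FT\otimes X)$ naturally in $X$.

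The proof then reduces to two key assertions: (a) tensoring with $\FT$ sends $\TS^-$ into $\TS^+$, equivalently $\HH_0(\FT\otimes \rouq_w\inv)\simeq 0$ for every $w\neq 1$; and (b) $\HH_0(\FT)\simeq R$ in $\KC^b(\ip{\one})$.  Granting these, I tensor the canonical triangle $\HH^0(X)\to X \to Q^-(X)\to$ (with $Q^-(X)\in \TS^-$) by the central complex $\FT$; applying $\HH_0$ annihilates the right-hand term by (a), yielding $\HH_0(\FT\otimes X)\simeq \HH_0(\FT\otimes \HH^0(X))$.  Since $\HH^0(X)$ is a complex of shifted copies of $\one$ and $\HH_0$ is $R$-linear, the right-hand side is naturally isomorphic to $\HH^0(X)\otimes_R \HH_0(\FT)\simeq \HH^0(X)$ by (b).

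For (a), the identity $\rouq_{w_0}\otimes \rouq_w\inv \simeq \rouq_{w_0 w\inv}$ follows from the reduced factorization $w_0=(w_0w\inv)\cdot w$ together with Rouquier's braid invariance, so $\FT\otimes \rouq_w\inv \simeq \rouq_{w_0}\otimes \rouq_{w_0 w\inv}$.  The task is to show this positive Rouquier product lies in $\TS^+$ whenever $w\neq 1$.  The decategorified statement---vanishing of the $H_1$-coefficient in the positive-basis expansion of $H_{w_0}H_{w_0 w\inv}$---is immediate from K\'alm\'an's identity $\Tr^n(H_{w\inv}\inv\FT)=\Tr^0(H_{w\inv}\inv)=0$ for $w\neq 1$, together with Lemma \ref{lem: trace as coefficient at 1}.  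To lift this to the chain level, my plan is to compute $\HH_0=(\pi^+)^n$ by iterating the partial-trace Markov moves of Lemma \ref{lemma:ptrRouquier}: at each stage, by choosing a suitable reduced presentation of the Rouquier product, one should produce a $\rouq_{n-k}$ factor that triggers the zero move $\pi^+(\cdot \otimes \rouq_{n-1}\otimes \cdot)\simeq 0$, or else reduces to a strictly simpler Rouquier product permitting induction on $\ell(w)$ or on $n$.  For (b), $\HH_0(\FT)\simeq R$ can be established by the same style of iterated-partial-trace computation applied to $\FT$ itself, exploiting the inductive decomposition of $\FT_n$ in terms of the Jucys--Murphy braid $\CL_n$ and $\FT_{n-1}$.

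The principal obstacle is assertion (a).  Its Hecke-algebraic shadow is a one-line consequence of K\'alm\'an's theorem, but promoting it to chain-level homotopy equivalence is genuinely nontrivial: $\TS^+$ is not closed under arbitrary tensor products of positive Rouquier complexes (for instance, $\rouq_s\otimes \rouq_s$ contains a trivial $R$-summand), so multiplication by $\rouq_{w_0}$ must systematically cancel all such trivial contributions.  This cancellation is encoded elegantly in K\'alm\'an's identity but requires careful categorical bookkeeping to produce at the level of complexes.
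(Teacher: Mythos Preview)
Your reduction via the semi-orthogonal decompositions of Theorem~\ref{th: semiortho 1} to (a) $\FT\otimes\TS^-\subset\TS^+$ and (b) $\HH_0(\FT)\simeq R$ is correct, and the deduction of the theorem from (a)+(b) is sound.  The gap is that you do not actually prove (a) or (b); you only sketch a plan.  For (a), the ``iterated Markov move'' idea does not work as stated: after one application of $\pi^+$ to $\FT\otimes\rouq_w^{-1}\simeq\rouq_{w^{-1}w_0}\otimes\rouq_{w_0}$ the result is not in general of the form $X\otimes\rouq_{n-2}\otimes Y$ with $X,Y\in\KC^b(\SBim_{n-2,1,1})$, so the iteration has no evident next step.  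You yourself flag this as the principal obstacle, and it is.

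The paper supplies the missing organizing principle by first proving the \emph{relative} statement $\pi^-(X)\simeq\pi^+(\CL_n\otimes X)$ (Theorem~\ref{thm:relativeSerre}), where $\CL_n=\FT_n\FT_{n-1}^{-1}$ is the Jucys--Murphy braid, and then inducting on $n$ via $\FT_n=\CL_n\FT_{n-1}$ and $\HH_0=(\pi^+)^n$.  The relative version is tractable because a single braid identity $\CL_n\rouq_{n-1}^{-1}\simeq\rouq_{n-1}\CL_{n-1}$ gives $\CL_n\otimes\US^-\subset\US^+$ at once (Lemma~\ref{lemma:Ln action}), and the splitting map $\Psi:\CL_n\to\one$ has cone manifestly in $\US^+$ (Lemma~\ref{lemma:maps from Ln}), yielding both the relative analogues of your (a) and (b) for free.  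In effect the paper peels off one Jucys--Murphy factor per inductive step rather than attacking $\FT$ globally.  Your assertions (a) and (b) \emph{can} alternatively be obtained directly from the Libedinsky--Williamson orthogonality (Theorem~\ref{th:LW}): since $\HH_0(\FT\otimes\rouq_w^{-1})\cong\Hom(\rouq_{w^{-1}w_0},\rouq_{w_0}^{-1})^\vee$, this vanishes for $w\neq 1$ and equals $R$ for $w=1$; but that route invokes a deeper external input than the paper's self-contained argument.
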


We will prove this theorem as a corollary of a certain ``relative version.'' 

\subsection{Jucys-Murphy braids and the splitting map}
\label{subsec:splitting map}
Define braids $\CL_n\in \Br_n$ inductively by $\CL_1=\one_1$ and
\[
\CL_n = \sigma_{n-1}(\CL_{n-1}\sqcup \one_1)\sigma_{n-1},\qquad \qquad n\geq 2.
\]
We will denote the braid $\CL_n$ and the Rouquier complex $F(\CL_n)$ by the same notation.  Note that
\[
\FT_n = \CL_2\otimes \CL_3\otimes\cdots \otimes \CL_n.
\]
For any $i$ we define the  chain map $\psi_i:F_i\to F^{-1}_i$ by the following diagram:
\[
\begin{tikzcd}
F_i \arrow{d}{\psi_i} & =  & {[\underline{B_i}} \arrow{r} \arrow{dr} & {R(1)]}\\
F^{-1}_i & = &  {[R (-1)}   \arrow{r} & {B_i]}\\
\end{tikzcd}
\]
Clearly, $\Cone[\psi_i]=[R(1)\to R(-1)]$. 
By combining these maps we get a ``splitting map'' $\Psi:\CL_n\rightarrow \one_n$. This map was studied in \cite[Section 4]{GH}
in a more complicated ``$y$-ified'' version.

\begin{lemma}\label{lemma:maps from Ln}
The space of chain maps $\Homc(\CL_n,\one_n)$ is homotopy equivalent to $R$, generated by the ``splitting map'' $\Psi:\CL_n\rightarrow \one_n$.  This splitting map becomes a homotopy equivalence after applying $\pi^+$.
\end{lemma}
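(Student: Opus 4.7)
The plan is to prove, by induction on $n$, the stronger statement that $\Cone(\Psi)\in \US_n^+$, where $\US_n^+\subset \KC^b(\SBim_n)$ is the kernel of $\pi^+$ described in Theorem \ref{thm:relative semiortho}. Both parts of the lemma will follow from this. Since $\US_n^+=\ker\pi^+$, applying $\pi^+$ to the distinguished triangle $\CL_n\xrightarrow{\Psi}\one_n\to \Cone(\Psi)\to \CL_n[1]$ immediately shows that $\pi^+(\Psi)$ is a homotopy equivalence. Applying $\Homc(-,\one)$ and invoking the semiorthogonality $\Hom(\US_n^+,\KC^b(\SBim_{n-1,1}))\simeq 0$ built into the SOD gives $\Homc(\CL_n,\one)\simeq \Homc(\one,\one)=R$, under which isomorphism the generator is the image of $\Id_\one$, namely $\Psi$ itself.

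For the inductive step I would decompose the splitting map as the composition
\[
\CL_n=F_{n-1}\otimes(\CL_{n-1}\sqcup\one_1)\otimes F_{n-1}\xrightarrow{F_{n-1}\otimes(\Psi_{n-1}\sqcup\one_1)\otimes F_{n-1}} F_{n-1}^{\otimes 2}\xrightarrow{\psi_{n-1}\otimes F_{n-1}} F_{n-1}^{-1}\otimes F_{n-1}\xrightarrow{u}\one_n,
\]
where $u$ is the counit of the Rouquier adjunction. The cone of a composition fits into iterated distinguished triangles built from the cones of the three individual factors, so it will suffice to place each of these cones into the triangulated subcategory $\US_n^+$. The rightmost factor $u$ is an equivalence, so its cone is zero. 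The middle factor has cone $\Cone(\psi_{n-1})\otimes F_{n-1}=[F_{n-1}(1)\to F_{n-1}(-1)]$, each of whose terms is in $\US_n^+$ because $F_{n-1}$ is.

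The hard step is the leftmost cone, $F_{n-1}\otimes(\Cone(\Psi_{n-1})\sqcup\one_1)\otimes F_{n-1}$. By induction $\Cone(\Psi_{n-1})\in \US_{n-1}^+$, hence lies in the triangulated span of objects of the form $X\otimes F_{n-2}\otimes Y$ with $X,Y\in \KC^b(\SBim_{n-2,1})$. Viewed inside $\SBim_n$ via the inclusion $\sqcup\one_1$, such $X$ and $Y$ involve only the generators $B_1,\ldots,B_{n-3}$, all of which distantly commute with $B_{n-1}$. Pulling $F_{n-1}$ past $X$ and past $Y$ and then applying the braid relation $F_{n-1}F_{n-2}F_{n-1}\simeq F_{n-2}F_{n-1}F_{n-2}$ will rewrite
\[
F_{n-1}\otimes(X\otimes F_{n-2}\otimes Y)\otimes F_{n-1}\simeq (X\otimes F_{n-2})\otimes F_{n-1}\otimes (F_{n-2}\otimes Y),
\]
and since $X\otimes F_{n-2}$ and $F_{n-2}\otimes Y$ both lie in $\KC^b(\SBim_{n-1,1})$, this manifestly sits in $\US_n^+$. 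Exactness of the functor $F_{n-1}\otimes(-)\otimes F_{n-1}$ together with the triangulated closure of $\US_n^+$ will lift this generator-level calculation to the full cone.

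The base case $n=1$ is trivial: $\CL_1=\one$, $\Psi_1=\Id$, so $\Cone(\Psi_1)=0$. The main obstacle lies in the leftmost-cone argument above; everything else is formal bookkeeping with triangles. One subtle but routine point is that the induction must be on $\Psi_n$ viewed as a piece of data and not merely as an existence statement, so one should fix the recursive construction of $\Psi_n$ above and check that it matches the ``combining'' prescription implicit in the lemma statement.
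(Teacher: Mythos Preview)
Your argument is correct and follows the same strategy as the paper: show that $\Cone(\Psi)$ lies in $\US_n^+=\ker\pi^+$, then deduce both statements from the semi-orthogonal decomposition (the paper phrases the deduction of the first statement via the adjunction $\pi^+\dashv$ inclusion, which is equivalent). The only difference is organizational. The paper asserts in one line that $\Cone(\Psi)$ is filtered by the Rouquier complexes $F(\sigma_{n-1}\cdots\sigma_{k}\cdots\sigma_{n-1})\simeq F(\sigma_k\cdots\sigma_{n-1}\cdots\sigma_k)$ for $1\le k\le n-1$, each manifestly in $\US_n^+$; you instead prove this filtration claim inductively, reducing to $\Cone(\Psi_{n-1})\in\US_{n-1}^+$ and then using far-commutativity plus a single braid move to show $F_{n-1}\otimes(\US_{n-1}^+\sqcup\one_1)\otimes F_{n-1}\subset\US_n^+$. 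Unwinding your induction reproduces exactly the paper's list of filtration pieces, so the two arguments are the same in content.
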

\begin{proof}
The first statement follows from the second.  Indeed, if $\Psi$ becomes a homotopy equivalence after applying $\pi^+$ then we have
\[
\Homc(\CL_n,\one)\cong \Homc(\pi^+(\CL_n),\one) \simeq \Homc(\one,\one) = R,
\]
generated by $\Psi$, as claimed.   The first isomorphism above holds since $\pi^+$ is the left adjoint to the inclusion $\SBim_{n-1,1}\rightarrow \SBim_{n}$ (Proposition \ref{prop:partial trace as adjoints}).

To prove the second statement it suffices to show that the cone of the splitting map $\Psi:\CL_n\rightarrow \one_n$ is mapped to a contractible complex by $\pi^+$.  However, $\Cone(\Psi)$ is in the triangulated hull of the Rouquier complexes
\[
F(\sigma_{n-1}\cdots\sigma_{k+1}\sigma_{k}\sigma_{k+1}\cdots \sigma_{n-1}) \ \ \simeq \ \ F(\sigma_k\cdots\sigma_{n-2}\sigma_{n-1}\sigma_{n-2}\cdots\sigma_k),\qquad 1\leq k\leq n-1,
\]
Each of these is annihilated by $\pi^+$ by Lemma \ref{lemma:ptrRouquier}, and the lemma follows.
\end{proof}

\subsection{Relative Serre duality}
\label{subsec:relative serre}

\begin{lemma}\label{lemma:Ln action}
Tensoring on the left (or right) with $\CL_n$ restricts to an equivalence of categories $\US^-\rightarrow \US^+$ with inverse given by tensoring on the left (or right) with $\CL_n\inv$, where $\US^\pm$ is as in Definition \ref{def:relativeTpm}.
%
\end{lemma}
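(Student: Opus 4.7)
The plan is to verify the inclusions $\CL_n \otimes \US^- \subseteq \US^+$ and $\CL_n\inv \otimes \US^+ \subseteq \US^-$ on a convenient set of generators; once these hold, the inverse-equivalence statement follows formally from $\CL_n \otimes \CL_n\inv \simeq \one \simeq \CL_n\inv \otimes \CL_n$. By Definition \ref{def:relativeTpm} together with Proposition \ref{prop:rouquier generation} applied to $\SBim_{n-1,1}$, the category $\US^-$ is the triangulated hull of complexes $F_v \otimes F_{n-1}\inv \otimes F_u$ with $u, v \in S_{n-1}$, and similarly $\US^+$ is the triangulated hull of $F_v \otimes F_{n-1} \otimes F_u$. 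Since tensoring with $\CL_n$ is an exact auto-equivalence of $\KC^b(\SBim_n)$, it suffices to verify the containments on these generators.

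The crucial ingredient is the classical fact that the Jucys--Murphy braid $\CL_n \in \Br_n$ commutes with every element of the subgroup $\Br_{n-1} \subset \Br_n$. Combined with the defining relation $\CL_n = \sigma_{n-1} \CL_{n-1} \sigma_{n-1}$, this yields the braid identity
\[
\CL_n \cdot v \cdot \sigma_{n-1}\inv \cdot u \ = \ v \cdot \sigma_{n-1} \cdot \CL_{n-1} \cdot u \qquad \text{in } \Br_n, \ u,v \in S_{n-1}.
\]
By Rouquier's theorem, equal braids have canonically homotopy equivalent Rouquier complexes, so
\[
\CL_n \otimes F_v \otimes F_{n-1}\inv \otimes F_u \ \simeq \ F_v \otimes F_{n-1} \otimes \CL_{n-1} \otimes F_u
\]
in $\KC^b(\SBim_n)$. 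Since $F_v$ and $\CL_{n-1} \otimes F_u$ both lie in $\KC^b(\SBim_{n-1,1})$, the right-hand side is a generator of $\US^+$. A parallel manipulation using $\CL_n\inv \sigma_{n-1} = \sigma_{n-1}\inv \CL_{n-1}\inv$ gives $\CL_n\inv \otimes F_v \otimes F_{n-1} \otimes F_u \simeq F_v \otimes F_{n-1}\inv \otimes \CL_{n-1}\inv \otimes F_u \in \US^-$. The right-tensoring versions follow by the mirror computation, or equivalently by applying the anti-involution of $\SBim_n$ that interchanges the two $R$-actions.

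The main obstacle is recognising that the braid-group centrality of $\CL_n$ over $\Br_{n-1}$ lifts directly to a commutation of Rouquier complexes via Rouquier's invariance theorem; once this is in hand, the rest of the proof is a formal bookkeeping exercise with triangulated hulls.
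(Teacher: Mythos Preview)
Your proof is correct and follows essentially the same approach as the paper: both arguments reduce to the braid identity $\CL_n\, \sigma_{n-1}\inv = \sigma_{n-1}\,\CL_{n-1}$ (equivalently $\CL_n\otimes F_{n-1}\inv \simeq F_{n-1}\otimes \CL_{n-1}$) together with the fact that $\CL_n$ commutes with all Rouquier complexes coming from $S_{n-1}\times S_1$, and then check the claim on generators of $\US^\pm$. Your write-up is a bit more explicit about invoking Proposition~\ref{prop:rouquier generation} to pin down generators and about the role of the anti-involution for the right-tensoring case, but the substance is the same.
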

\begin{proof}
 It is clear that tensoring on the left with $\CL_n$ restricts to a functor $\US^-\rightarrow \US^+$, since $\CL_n \otimes F_{n-1}\inv \simeq F_{n-1}\otimes \CL_{n-1}$ and $\CL_n$ tensor commutes with all Rouquier complexes $F_w$ with $w\in S_{n-1}\times S_1$.  Similarly, tensoring on the left with $\CL_n\inv$ restricts to a functor $\US^+\rightarrow \US^-$.  These functors are inverse equivalences.  A similar argument takes care of tensoring on the right with $\CL_n^\pm$.
\end{proof}

\begin{theorem}\label{thm:relativeSerre}
We have $\pi^-(X)\simeq \pi^+(\CL_n\otimes X)\simeq \pi^+(X\otimes \CL_n)$ for all $X\in \KC^b(\SBim_n)$.  These homotopy equivalences are natural isomorphisms of functors $\KC^b(\SBim_n)\rightarrow \KC^b(\SBim_{n-1,1})$.
\end{theorem}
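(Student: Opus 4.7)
The strategy is to combine the relative semi-orthogonal decomposition of Theorem \ref{thm:relative semiortho} with the computation $\pi^+(\CL_n)\simeq\one_n$ extracted from Lemma \ref{lemma:maps from Ln} and the equivalence $\CL_n\otimes \US^-=\US^+$ of Lemma \ref{lemma:Ln action}. The idea is first to use the semi-orthogonal decomposition to reduce general $X$ to the case $X\in\KC^b(\SBim_{n-1,1})$, then to handle that case by commuting $\CL_n$ past $X$ and invoking $\pi^+(\CL_n)\simeq\one_n$.

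For any $X\in\KC^b(\SBim_n)$, Theorem \ref{thm:relative semiortho} provides a natural distinguished triangle
\[
\pi^-(X)\to X\to Q^-(X)\to \pi^-(X)[1]
\]
with $Q^-(X)\in\US^-$. Tensoring on the left by $\CL_n$ sends the third vertex into $\US^+=\ker(\pi^+)$ by Lemma \ref{lemma:Ln action}, so applying $\pi^+$ kills that vertex and yields a natural homotopy equivalence
\[
\pi^+(\CL_n\otimes X)\simeq \pi^+(\CL_n\otimes \pi^-(X)),
\]
reducing the problem to computing $\pi^+(\CL_n\otimes Y)$ for $Y\in\KC^b(\SBim_{n-1,1})$.

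For this latter computation I would use that $\CL_n$, being the Rouquier complex of the Jucys-Murphy braid (``strand $n$ wraps once around the rest''), commutes in $\Br_n$ with every $\sigma_i$ for $i\leq n-2$; Rouquier's uniqueness theorem then upgrades this to canonical tensor commutation of $F(\CL_n)$ with $F_w$ for all $w\in S_{n-1}\times S_1$, and Proposition \ref{prop:rouquier generation} extends it to a natural homotopy equivalence $\CL_n\otimes Y\simeq Y\otimes \CL_n$ for all $Y\in\KC^b(\SBim_{n-1,1})$ (this is the same commutation already invoked in the proof of Lemma \ref{lemma:Ln action}). Combining with the $\SBim_{n-1,1}$-bilinearity of $\pi^+$ (Lemma \ref{lemma:tr is bilinear}, applied term-wise) and Lemma \ref{lemma:maps from Ln} gives
\[
\pi^+(\CL_n\otimes Y)\simeq \pi^+(Y\otimes \CL_n)\simeq Y\otimes \pi^+(\CL_n)\simeq Y.
\]
Specializing to $Y=\pi^-(X)$ produces $\pi^+(\CL_n\otimes X)\simeq\pi^-(X)$, and the equivalence $\pi^+(X\otimes\CL_n)\simeq\pi^-(X)$ follows by tensoring on the right instead of the left throughout, using the right-hand half of Lemma \ref{lemma:Ln action}.

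The main point I expect to need care is naturality: the commutation isomorphisms $\CL_n\otimes Y\simeq Y\otimes\CL_n$ for $Y\in\KC^b(\SBim_{n-1,1})$ must be coherent enough to splice cleanly with the other natural transformations in the argument (the unit/counit of the semi-orthogonal decomposition, the splitting map $\Psi$, and the bilinearity isomorphisms of $\pi^+$). Verifying this should reduce to Rouquier's statement on the uniqueness up to canonical homotopy equivalence, together with the straightforward term-wise extension of the bilinearity of $\pi^+$ from bimodules to bounded complexes.
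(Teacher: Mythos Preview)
Your approach is correct and follows essentially the same route as the paper: use the relative semi-orthogonal decomposition to reduce to $Y\in\KC^b(\SBim_{n-1,1})$, then invoke Lemma~\ref{lemma:maps from Ln}. The only difference is a minor detour: you pass through the commutation $\CL_n\otimes Y\simeq Y\otimes\CL_n$ before applying bilinearity of $\pi^+$, whereas the paper simply tensors the splitting map $\Psi:\CL_n\to\one$ on the right by $\pi^-(X)$ and observes that $\Cone(\Psi)\otimes \pi^-(X)\in\US^+$ (since $\US^+$ is stable under tensoring on either side by $\KC^b(\SBim_{n-1,1})$), so $\pi^+$ kills the cone directly. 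This avoids invoking the canonical commutation and makes naturality more transparent, since everything is induced by the single map $\Psi$; but your version is not wrong.
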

\begin{proof}
We only consider the equivalence $\pi^-(X)\simeq \pi^+(\CL_n\otimes X)$; the equivalence $\pi^-(X)\simeq \pi^+(X\otimes \CL_n)$ is proven similarly.  Let $X\in \KC^b(\SBim_n)$ be given.  We may as well assume that $X$ is expressed as
\[
X\simeq (\QB^-(X)  \buildrel[1]\over\rightarrow \pi^-(X))
\]
where $\QB^-(X)\in \US^-$.  Here, the label $[1]$ above an arrow indicates a chain map of degree 1 so for instance $B\simeq (C \buildrel[1]\over\rightarrow A)$ means that $B\simeq \Cone(C[-1]\rightarrow A)$ or, equivalently $B$ fits into a distinguished triangle of the form $A\rightarrow B\rightarrow C\rightarrow A[1]$.  Tensoring with $\CL_n$ yields 
\[
\CL_n\otimes X \ \simeq \ ( \CL_n\otimes \QB^-(X) \buildrel[1]\over\rightarrow \CL_n\otimes \pi^-(X) ).
\]
Since $\CL_n\otimes \QB^-(X)\in \US^+$, it follows that  $\pi^+(\CL_n\otimes \QB^-(X))\simeq 0$, hence
\[
\pi^+(\CL_n\otimes X)\simeq \pi^+(\CL_n\otimes \pi^-(X))\simeq  \pi^-(X).
\]
In this last equivalence we used Lemma \ref{lemma:maps from Ln}. 

Now we consider the naturality of this homotopy equivalence.  Let $X\rightarrow Y$ be a chain map.  Then in terms of the decompositions $X\simeq (\pi^-(X)\rightarrow \QB^-(X))$ and $Y\simeq (\pi^-(Y)\rightarrow \QB^-(Y))$, the chain map $f$ can be written as

\[
\begin{tikzpicture}[baseline=-2.8em]
\tikzstyle{every node}=[font=\small]
\node (a) at (-1,0) {$X$};
\node  at (0,0) {$\simeq$};
\node at (1.2,0) {$\Big($};
\node at (5.7,0) {$\Big)$};
\node (b) at (2,0) {$\QB^-(X)$};
\node (c) at (5,0) {$\pi^-(X)$};
\node (d) at (-1,-2.5) {$Y$};
\node  at (0,-2.5) {$\simeq$};
\node at (1.2,-2.5) {$\Big($};
\node at (5.7,-2.5) {$\Big)$};
\node (e) at (2,-2.5) {$\QB^-(Y)$};
\node (f) at (5,-2.5) {$\pi^-(Y)$};
\path[->,>=stealth',shorten >=1pt,auto,node distance=1.8cm,
  thick]
(b) edge node {$[1]$} (c)
(e) edge node {$[1]$} (f)
(a) edge node {$f$} (d)
(b) edge node {} (e)
(b) edge node {} (f)
(c) edge node {$\pi^-(f)$} (f);
\end{tikzpicture}.
\]
Applying $\pi^+(\CL_n\otimes -)$ and contracting the contractible terms $\pi^+(\CL_n\otimes \QB^-(X))$ and $\pi^+(\CL_n\otimes \QB^-(Y))$, we obtain a diagram which commutes up to homotopy:
\[
\begin{tikzpicture}[baseline=-2.8em]
\tikzstyle{every node}=[font=\small]
\node (a) at (-3,0) {$\pi^+(\CL_n\otimes X)$};
\node  at (-.7,0) {$\simeq$};
\node (b) at (2,0) {$\pi^+(\CL_n\otimes \pi^-(X))$};
\node (c) at (7,0) {$\pi^-(X)$};
\node (d) at (-3,-2.5) {$\pi^+(\CL_n\otimes Y)$};
\node  at (-.7,-2.5) {$\simeq$};
\node (e) at (2,-2.5) {$\pi^+(\CL_n\otimes \pi^-(Y))$};
\node at (4.5,0) {$\simeq$};
\node at (4.5,-2.5) {$\simeq$};
\node (f) at (7,-2.5) {$\pi^-(Y)$};
\path[->,>=stealth',shorten >=1pt,auto,node distance=1.8cm,
  thick]
(a) edge node {$\pi^+(\CL_n\otimes f)$} (d)
(b) edge node {$\pi^+(\CL_n\otimes \pi^-(f))$} (e)
(c) edge node {$\pi^-(f)$} (f);
\end{tikzpicture}.
\]
The second square commutes up to homotopy because it is induced by the splitting map $\CL_n\rightarrow \one$ (together with the observation that $\pi^+(\pi^-(X))=\pi^-(X)$ naturally).  This proves the statement about naturality.
\end{proof}







\subsection{Top versus bottom}
\label{subsec:top v bottom}
For each $1\leq r\leq n$, let $\US_r^\pm\subset \KC^b(\SBim_n)$ denote the full triangulated subcategories spanned by $X \otimes \rouq_{r-1}^\pm \otimes Y$ for all $X,Y\in \KC^b(\SBim(S_r\times S_1^{n-r})$.

Iterating the semi-orthogonal decompositions from \S \ref{subsec:relative semiortho} yields more sophisticated semi-orthogonal decompositions of $\KC^b(\SBim_n)$ of the form:
\[
\KC^b(\SBim_n) \ \simeq \ \left(\SBim(S_r\times S_1^{n-r}) \rightarrow \US_r^+\rightarrow \US_{r+1}^+\rightarrow\cdots \rightarrow \US_n^+\right),
\]
\[
\KC^b(\SBim_n) \ \simeq \ \left(\US_n^-\rightarrow \cdots\rightarrow \US_{r+1}^-\rightarrow \US_r^-\rightarrow \SBim(S_r\times S_1^{n-r})\right),
\]
or more generally
\[
\KC^b(\SBim_n) \ \simeq \ \left(\US_{l_b}^-\rightarrow \cdots\rightarrow \US_{l_1}^-\rightarrow \SBim(S_r\times S_1^{n-r}) \rightarrow \US_{k_1}^+\rightarrow \cdots \rightarrow \US_{k_a}^+\right),
\]
for any decomposition $\{r+1,\ldots,n\} = \{k_1<\cdots < k_a\}\sqcup \{l_1<\cdots<l_b\}$. 

The case $r=1$ yields the semi-orthogonal decompositions considered in \S \ref{subsec:absolute semiortho decomp}:
\[
\KC^b(\SBim_n) \ \simeq \ \left(\SBim(S_1^n)\rightarrow \TS^+\right) \ \simeq \left(\TS^-\rightarrow \SBim(S_1^n)\right),
\]
where $\TS^\pm\subset \KC^b(\SBim_n)$ is the full triangulated category spanned by the Rouquier complexes $\rouq_w^{\pm 1}$ with $w\neq 1$.

\begin{proof}[Proof of Theorem \ref{thm:left right}]
We must show that
\[
\HH^0(X)\simeq \HH_0(\FT_n\otimes X)
\]
for all $X\in \KC^b(\SBim_n)$, and that these homotopy equivalences yield a natural isomorphism of functors $\KC^b(\SBim_n)\rightarrow \KC^b(R-\text{gmod})$.

We will prove this by induction on $n$.  The base case $n=1$ is trivial.  Note that $\FT_n = \CL_n\otimes \FT_{n-1}$, $\HH^0(X) =(\pi^-)^n(X)$, and $\HH_0(X) =(\pi^+)^n(X)$.  Thus,
\begin{eqnarray*}
\HH^0(X)
&=& (\pi^-)^n(X) \\
&\simeq & \HH_0(\FT_{n-1} \otimes\pi^-(X))\\
& \simeq & \HH_0(\FT_{n-1}\otimes \pi^+(\CL_n \otimes X))\\
& \cong & \HH_0(\pi^+(\FT_{n-1}\otimes\CL_n \otimes X))\\
& =&  \HH_0(\FT_n\otimes  X).
\end{eqnarray*}
In the second line we used the induction hypothesis, in the third line we used Theorem \ref{thm:relativeSerre}, in the fourth we used Lemma \ref{lemma:tr is bilinear}, and the last line is clear.
\end{proof}

\begin{remark}
In light of the isomorphism $\HH_0(Y)\cong \HH^n(Y)(-2n)$, we prefer to view the result of Theorem \ref{thm:left right} as saying
\[
\HH^n(X)(-2n)\simeq \HH^0(\FT\inv\otimes X).
\]
\end{remark}

\begin{theorem}
\label{thm:serre}
The Rouquier complex for the full twist braid is a Serre functor for $\KC^b(\SBim_n)$. In other words,
for all $A,B\in \KC^b(\SBim_n)$ we have
$$
\Homc(A,B)=\Homc(B\otimes \FT,A)^{\vee},
$$
where the dual on the right hand side is defined using the left $R$-action on the hom complex.
\end{theorem}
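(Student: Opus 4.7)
The plan is to reduce the statement to Theorem \ref{thm:left right} by pulling both sides through the rigid duality isomorphisms collected in \S \ref{subsec:duals}. First I would collapse the left-hand side via
$$\Homc(A,B) \cong \Homc(\one, B\otimes A^\vee) \cong \HH^0(B\otimes A^\vee),$$
where the isomorphism preserves the left $R$-action on $\Homc(A,B)$ (see Remark \ref{rem:duality with R action}); the last identification is \eqref{eq:HH^0 as hom} extended to complexes.

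For the right-hand side I would use the other duality in \S \ref{subsec:duals}, namely $\Homc(B\otimes\FT, A) \cong \Homc(B\otimes\FT\otimes A^\vee, \one)$ as complexes of left $R$-modules. Since the hom complex is termwise free as a left $R$-module (Lemma \ref{lemma:free homs}, applied termwise), the version of Corollary \ref{cor:HH_0 as hom} for complexes identifies its dual with Hochschild homology:
$$\Homc(B\otimes\FT, A)^\vee \cong \Homc(B\otimes\FT\otimes A^\vee, \one)^\vee \cong \HH_0(B\otimes\FT\otimes A^\vee).$$

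Now I would invoke the centrality of the full twist in $\KC^b(\SBim_n)$, proved by Elias and the second author \cite{EH2}, to tensor-commute $\FT$ to the front: $B\otimes\FT\otimes A^\vee \simeq \FT\otimes B\otimes A^\vee$, naturally. Applying Theorem \ref{thm:left right} with $X := B\otimes A^\vee$ gives
$$\HH_0(\FT\otimes B\otimes A^\vee) \simeq \HH^0(B\otimes A^\vee) \cong \Homc(A,B),$$
and concatenating the chain of isomorphisms delivers the theorem.

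The main obstacle I anticipate is bookkeeping rather than content: one must check at each step that the correct $R$-action (left, in this case) is being preserved by the chosen duality isomorphism, and that each equivalence is natural in both $A$ and $B$, which is required for the Serre functor interpretation. Both ingredients (the naturality in Theorem \ref{thm:left right} and the fact that the centrality equivalence of \cite{EH2} is canonical) are already in place, so once the $R$-linearity is verified using Remark \ref{rem:duality with R action}, the proof is essentially a diagram-chase.
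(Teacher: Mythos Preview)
Your proposal is correct and follows essentially the same route as the paper: both reduce to Theorem \ref{thm:left right} via the rigid duality isomorphisms of \S \ref{subsec:duals} together with Corollary \ref{cor:HH_0 as hom}. The only cosmetic difference is that the paper starts from $\Homc(A,B)\cong\Homc(A\otimes B^\vee,\one)\cong\HH_0(A\otimes B^\vee)^\vee$ and places $\FT^{-1}$ on the right when invoking Theorem \ref{thm:left right}, thereby bypassing the explicit appeal to centrality of $\FT$ from \cite{EH2}; your use of centrality is an equally valid way to align the factors.
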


\begin{proof}
Recall that $\SBim_n$ has duals, hence $\Homc(A,B)\cong\Homc(A\otimes B^\vee,\one)$ as complexes of left $R$-modules.
By Theorem \ref{thm:left right}, we have
$$
\Homc(A,B)\cong \Homc(A\otimes B^\vee,\one) \cong \HH_0(A\otimes B^\vee)^\vee 
$$
as complexes of left $R$-modules (the right action of $R$ on $\Hom(A,B)$ corresponds to ``middle multiplication'' on $A\otimes B^\vee$.  By Theorem \ref{thm:left right}, this latter complex is homotopy equivalent to
\[
\HH^0(A\otimes B^\vee\otimes \FT\inv)^\vee =\Homc(\one,A\otimes B^\vee\otimes \FT\inv)^\vee\cong \Homc(\FT\otimes B, A)^\vee
\]
where in the last complex we use the left $R$-action on $ \Homc(\FT\otimes B, A)$ when forming the dual $R$-module $(-)^\vee$.  These are homotopy equivalences of complexes of left $R$-modules. \end{proof}

\subsection{Soergel modules}
\label{sec:smod}

In this section we review the Serre duality for the category of {\em Soergel modules} $\SMod$, which is closely related to the Bernstein-Gelfand-Gelfand category $\mathcal{O}$ for the Lie algebra $\mathfrak{gl}_n$. Soergel modules are obtained from Soergel bimodules as quotients by the right $R$-action. Given $M\in \SBim_n$, we define
$$
\overline{M}=M\otimes_{R} R/(x_1,\ldots,x_n)=M\otimes_{R}\k.
$$
For example, $\overline{R}=\k$. Note that the $R\otimes_\k R$ action on the Soergel bimodule $M$ factors through the quotient $R\otimes_{R^{S_n}} R$, so the residual $R\otimes_\k \k$-action on $\overline{M}$ factors through $R\otimes_{R^{S_n}}\k$.  This latter ring is called the \emph{coinvariant ring}, denoted $C$; it is the quotient of $R$ by the ideal generated by positive degree symmetric functions in the $x_i$.  By definition, a morphism in $\SMod$ is a homogeneous $C$-linear map.

\begin{remark}
Let us explain the connection between $\SMod$ and the BGG categories $\OC$.  Let $\OC_0=\OC_0(\mathfrak{gl}_n)$ denote the principal block of the category $\OC$ for $\gl_n$ (i.e.~the block containing the trivial 1-dimensional representation).  This category has a special projective module $P=P_{w_0}$, the \emph{anti-dominant projective}, whose endomorphism ring is isomorphic to the coinvariant ring $C$.  Soergel \cite{Soergel} proved that the functor $\OC_0\rightarrow C\text{-mod}$ sending $M\mapsto \Hom_{\OC_0}(P,M)$ is fully faithful on projectives, hence identifies $D^b(\OC_0)$ ($\simeq$ the homotopy category of projectives) with a full subcategory of $\KC^b(C\text{-mod})$.  This full subcategory is precisely $\KC^b(\SMod)$.  An important consequence (and the original motivation) for such a description is that it yields a $\Z$-graded lift of $\OC_0$.
\end{remark}

Soergel modules do not form a monoidal category, but they form a module category over $\SBim_n$: given 
$A,B\in \SBim_n$, we have 
\begin{equation}
\label{eq: bimodule act on module}
\overline{AB}=A\otimes_{R} \overline{B}.
\end{equation}
The functor $\overline{\cdot}$ can be extended to complexes, and defines a functor $\KC^b(\SBim_n)\to \KC^b(\SMod_n)$.
Equation \eqref{eq: bimodule act on module} holds for complexes as well. As a consequence, left tensor multiplication with Rouquier complexes defines a braid group action on $\KC^b(\SMod_n)$.

\begin{lemma}
For $A,B\in \KC^b(\SBim_n)$ one has
$$
\Hom(\overline{A},\overline{B})=\Hom(A,B)\otimes_{R} \k,
$$
where we consider $\Hom(A,B)$ as a right $R$-module. 
\end{lemma}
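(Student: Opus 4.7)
The plan is to first reduce the statement for complexes to the case of single Soergel bimodules, then reduce via duality to the special case $A = R$, and finally verify the resulting identity about Hochschild cohomology using the structure of the Soergel functor $\overline{(-)}$.

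First, I would argue that it suffices to treat single bimodules $A, B \in \SBim_n$. The complex $\Homc(A,B)$ is formed termwise from the hom spaces $\Hom(A^i, B^j)$, each of which is a free right $R$-module by Lemma \ref{lemma:free homs}. Consequently $(-)\otimes_R \k$ is exact on these and commutes with both the differential and the formation of the total complex. The functor $\overline{(-)}$ extends to complexes termwise, and $\Homc(\overline{A},\overline{B})$ is the analogous total complex. So the statement for complexes will follow from the statement for objects.

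Second, I would reduce to the case $A = R$ using duality. By \eqref{duality for SBim right} one has $\Hom(A,B)\cong \HH^0(A^\vee \otimes B)$ as right $R$-modules. On the module side, the biadjunction $A\dashv A^\vee\dashv A$ in $\SBim_n$ descends to the $\SBim_n$-module category $\SMod$, producing a natural isomorphism $\Hom(\overline{A},\overline{B})\cong \Hom(\k, \overline{A^\vee \otimes B})$. So the lemma reduces to showing that, for every $M\in \SBim_n$, the natural map
\[
\HH^0(M) \otimes_R \k \longrightarrow \Hom_{\SMod}(\k, \overline{M}), \qquad m\otimes 1 \mapsto \overline{m},
\]
is an isomorphism of graded $\k$-vector spaces. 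Well-definedness and landing in $\Hom_{\SMod}(\k,\overline{M})$ are immediate: for $m\in \HH^0(M)$ we have $x_i \cdot \overline{m} = \overline{x_i m} = \overline{m x_i} = 0$ in $\overline{M}$.

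Finally, I would establish the isomorphism in the reduced statement by induction on the length of a Bott-Samelson expression for $M$, using Corollary \ref{co:HHfree} (freeness of $\HH^0(M)$ as a right $R$-module) and the Markov moves of Lemma \ref{lemma:HH markovs}, which control both sides simultaneously. Injectivity follows once one knows that $\HH^0(M)$ is a right $R$-direct summand of $M$, so that $\HH^0(M)\cap MR_+ = \HH^0(M)R_+$; surjectivity follows from a matching of graded ranks between $\HH^0(M)$ and the $C_+$-annihilator of $\overline{M}$.

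The main obstacle is this last step, which is essentially a form of Soergel's classical Hom formula relating hom spaces in $\SBim_n$ and $\SMod$. Verifying that $\HH^0(M)$ is a right $R$-direct summand of $M$ with matching graded rank requires a careful induction through Bott-Samelson expressions via Lemma \ref{lemma:HH markovs}, and ultimately rests on the structure theorem for indecomposable Soergel bimodules.
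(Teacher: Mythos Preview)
Your first reduction to single Soergel bimodules matches the paper exactly, including the observation that freeness of $\Hom(A^i,B^j)$ as a right $R$-module is what lets one pass from the termwise statement to the statement for complexes. After that point the paper is far terser: it simply records that $A$, $B$, and $\Hom(A,B)$ are free as right $R$-modules (the last by Lemma~\ref{lemma:free homs}) and declares the result.

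Your further reduction to $A=R$ via duality is correct, but it does not actually simplify the problem: the special case $\Hom(R,M)\otimes_R\k\cong\Hom_{\SMod}(\k,\overline M)$ is \emph{equivalent} to the general case by the very adjunction you invoke (take $M=A^\vee\otimes B$ going one way, $A=R$ and $B=M$ going the other). The Markov-move induction you then sketch is a valid strategy in principle, but the ``main obstacle'' you flag at the end---showing that $\HH^0(M)$ is a right-$R$ direct summand of $M$ with the correct graded rank---is essentially Soergel's Hom formula, which is exactly the standard input the paper's one-line conclusion is resting on. So your route is not wrong, just longer: you reduce to a special case that is no easier, and then outline how to reprove from scratch the piece of Soergel theory that the paper takes as given.
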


\begin{proof}
It is sufficient to prove the lemma for $A,B\in \SBim_n$.  Then $A$ and $B$ are free as right $R$-modules, and
$\Hom(A,B)$ is free as a right $R$-module by Lemma \ref{lemma:free homs}, so the result follows.
\end{proof}

\begin{theorem}
\label{thm: serre for smod}
For all $A,B\in \KC^b(\SMod)$ one has 
$$
\Homc(A,B)=\Homc(B,\FT^{-1}\otimes A)^{*},
$$
where in the right hand side we take a linear dual over $\k$.
\end{theorem}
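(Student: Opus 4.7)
The plan is to reduce the statement for Soergel modules to the bimodule Serre duality (Theorem \ref{thm:introserre}) by lifting to $\KC^b(\SBim_n)$ via the bridge lemma stated just above the theorem, and then pass back using $-\otimes_R \k$. The key observation is that tensoring with $\k$ over $R$ naturally converts the $R$-linear dual to the $\k$-linear dual, provided the underlying $R$-modules are finitely generated and free.

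I begin by choosing lifts $A', B' \in \KC^b(\SBim_n)$ with $\overline{A'} = A$ and $\overline{B'} = B$; such lifts should exist since every object of $\SMod$ comes from a Soergel bimodule and the bridge lemma exhibits every Soergel-module morphism as $\overline{g}$ for some bimodule morphism $g$. Extending the bridge lemma to hom-complexes termwise, one obtains
$$\Homc_{\SMod}(\overline{X}, \overline{Y}) \;\cong\; \Homc_{\SBim}(X, Y) \otimes_R \k,$$
viewing the bimodule hom-complex as a complex of right $R$-modules. Applying Theorem \ref{thm:introserre} in its right-$R$-module formulation to the lifts then gives
$$\Homc_{\SBim}(A', B') \;\simeq\; \Homc_{\SBim}(B', \FT^{-1} \otimes A')^\vee,$$
where $\vee$ is the dual over the right $R$-action. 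Tensoring with $\k$ over $R$ on both sides yields
$$\Homc_{\SMod}(A, B) \;\simeq\; \Homc_{\SBim}(B', \FT^{-1} \otimes A')^\vee \otimes_R \k.$$

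To finish, I need the natural isomorphism $M^\vee \otimes_R \k \cong (M \otimes_R \k)^*$ for any finitely generated free graded right $R$-module $M$. This is easily verified on $M = R(d)$ (both sides equal $\k(-d)$) and extended by additivity. Each term of the hom-complex is finitely generated and free as a right $R$-module by Lemma \ref{lemma:free homs} combined with the boundedness of $A'$ and $B'$, so the isomorphism applies termwise. Combining with the bridge lemma in reverse and the compatibility $\overline{\FT^{-1} \otimes A'} = \FT^{-1} \otimes A$ from equation \eqref{eq: bimodule act on module}, this yields
$$\Homc_{\SMod}(A, B) \;\simeq\; \Homc_{\SMod}(B, \FT^{-1} \otimes A)^*,$$
as required.

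The main obstacle is the lifting step: an arbitrary complex in $\KC^b(\SMod)$ need not lift to $\KC^b(\SBim_n)$ on the nose because a lift of its differential can fail to square to zero (the failure lies in $R_{>0}$). Resolving this carefully—either by replacing the naive lift with a quasi-isomorphic one, by arguing at the level of quasi-isomorphism classes, or by invoking a known derived equivalence between an appropriate model of $\KC^b(\SMod_n)$ and a subcategory of $\KC^b(\SBim_n)$—is what makes the reduction fully rigorous. Everything after this setup is a direct chain of identifications, relying only on the bimodule Serre duality plus the mechanical compatibility of $(-)^\vee$ with $-\otimes_R \k$ on free modules.
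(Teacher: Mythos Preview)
Your proposal is correct and follows essentially the same route as the paper: lift to $\KC^b(\SBim_n)$, apply the bimodule Serre duality (Theorem~\ref{thm:serre}), then descend via the bridge lemma together with the compatibility $M^\vee\otimes_R\k\cong (M\otimes_R\k)^*$ for finitely generated free $R$-modules. The paper's proof simply asserts ``we can assume $A=\overline{M}$ and $B=\overline{N}$'' without discussing the lifting obstacle you raise, so your version is, if anything, more explicit about what needs to be checked.
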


Again, here we regard $\FT^{-1}$ as a complex of Soergel bimodules acting (on the left) on the complex of Soergel modules $A$.

\begin{proof}
We can assume $A=\overline{M}$ and $B=\overline{N}$ for $M,N\in \SBim_n$. Then 
$$
\Homc(M,N)=\Homc(N,\FT^{-1}\otimes M)^{\vee},
$$
where we regard both sides as complexes of (right) $R$-modules. Now.
$$
\Homc(A,B)=\Homc(\overline{M},\overline{N})=\Homc(M,N)\otimes_{R}\k,
$$
$$
\Homc(B,\FT^{-1}\otimes A)=\Homc(\overline{N},\FT^{-1}\otimes \overline{M})=\Homc(\overline{N},\overline{\FT^{-1}\otimes M})=\Homc(N,\FT^{-1}\otimes M)\otimes_{R}\k,
$$
and 
$$
\Homc(B,\FT^{-1}\otimes A)^*=\Homc(N,\FT^{-1}\otimes M)^{\vee}\otimes_{R}\k.
$$
\end{proof}
 
Theorem \ref{thm: serre for smod} was proved earlier in \cite{Bez,MS} by different methods and using the relation between $\SMod$ and the category
$\mathcal{O}$.  

\begin{appendix}
\section{Semiorthogonal decompositions for Coxeter groups}
\label{app:anytype}

Let $W$ be a Coxeter group with the set of reflections $S$. Let $\frakh$ be a realization  of $W$.  
Define $R=\k[\frakh]$,
and $B_s=R\otimes_{R^s}R$ for $s\in S$. The category $\SBim_W$ of Soergel bimodules is the smallest full subcategory of
the category of $R-R$ bimodules containing $R$ and all $B_s$ and closed under direct sums, direct summands, tensor products and 
grading shifts. For $W=S_n$ and $\frakh=\k^n$ we recover the category $\SBim_n$ defined in Section \ref{subsec:SBim}.

Rouquier complexes can be defined in the homotopy category $\KC^b(\SBim_W)$ similarly to Section \ref{subsec:rouquier}:
$$
\rouq_s=[B_s\to R],\ \rouq_s^{-1}=[R\to B_s]
$$
In \cite{Rouquier} Rouquier proved that they satisfy the relations in the braid group associated to $W$. Therefore for any $w\in W$ one can consider  a Rouquier complex $\rouq_w$ corresponding to the positive permutation braid associated to any reduced expression of $w$. It does not depend on the choice of a reduced expression up to homotopy equivalence. 

We define triangulated subcategories $\US_{<w}$ and $\US_{\le w}$ of $\KC^b(\SBim_W)$ generated by the Rouquier complexes $\rouq_v$ with $v<w$ 
(and $v\le w$) in Bruhat order. 

For any $s\in S$, there exists a chain map $\psi_s:\rouq_s\to \rouq_s^{-1}$ such that 
\begin{equation}
\label{eq:general skein}
\Cone[\rouq_s\to \rouq_s^{-1}]=[R\to R].
\end{equation}
As an immediate corollary, we get the following:
\begin{proposition}
\label{prop:positive to negative}
For any $w\in W$ there is a chain map $\psi_w: \rouq_w\to \rouq_{w^{-1}}^{-1}$.
The cone of $\psi_w$ is filtered by $\rouq_{u^{-1}}^{-1}$ for $u<w$ in Bruhat order.
\end{proposition}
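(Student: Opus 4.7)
My plan is to induct on $\ell(w)$. The base case $w=e$ is trivial: take $\psi_e=\Id_R$, whose cone is zero. For the inductive step, I would pick a simple reflection $s$ with $\ell(sw)<\ell(w)$, write $w=sw'$ with $\ell(w')=\ell(w)-1$, and define $\psi_w$ as the composition
$$
\rouq_w \ = \ \rouq_s\otimes \rouq_{w'} \ \xrightarrow{\rouq_s\otimes \psi_{w'}} \ \rouq_s\otimes \rouq_{(w')^{-1}}^{-1} \ \xrightarrow{\psi_s\otimes \rouq_{(w')^{-1}}^{-1}} \ \rouq_s^{-1}\otimes\rouq_{(w')^{-1}}^{-1} \ = \ \rouq_{w^{-1}}^{-1},
$$
where the last equality uses that $w^{-1}=(w')^{-1}s$ is a reduced expression (so the concatenation of Rouquier complexes equals the Rouquier complex for the product), and $\psi_{w'}$ is furnished by the induction hypothesis.

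Next I would apply the octahedral axiom to this factorization to obtain a distinguished triangle
$$
\rouq_s\otimes \Cone(\psi_{w'})\ \to\ \Cone(\psi_w)\ \to\ \Cone(\psi_s)\otimes\rouq_{(w')^{-1}}^{-1}\ \to\ \rouq_s\otimes \Cone(\psi_{w'})[1].
$$
By \eqref{eq:general skein}, $\Cone(\psi_s)=[R\to R]$ is filtered by two copies of $R=\rouq_{e^{-1}}^{-1}$, so the third term is filtered by copies of $\rouq_{(w')^{-1}}^{-1}$, and $w'<w$. By induction, the first term is filtered by complexes of the form $\rouq_s\otimes\rouq_{u^{-1}}^{-1}$ with $u<w'$. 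It therefore remains to show that each such $\rouq_s\otimes\rouq_{u^{-1}}^{-1}$ lies in the triangulated hull of $\{\rouq_{v^{-1}}^{-1}:v<w\}$.

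For this last step I would again use $\psi_s$: the triangle $\rouq_s\otimes\rouq_{u^{-1}}^{-1}\to \rouq_s^{-1}\otimes\rouq_{u^{-1}}^{-1}\to \Cone(\psi_s)\otimes\rouq_{u^{-1}}^{-1}\to$ reduces the analysis to $\rouq_s^{-1}\otimes\rouq_{u^{-1}}^{-1}$ (the cone contributes copies of $\rouq_{u^{-1}}^{-1}$, and $u<w$). Splitting into cases: if $\ell(su)=\ell(u)+1$, then $u^{-1}s$ is a reduced expression for $(su)^{-1}$, so $\rouq_s^{-1}\otimes\rouq_{u^{-1}}^{-1}=(\rouq_{u^{-1}}\otimes\rouq_s)^{-1}=\rouq_{(su)^{-1}}^{-1}$; if $\ell(su)=\ell(u)-1$, then $u^{-1}=(su)^{-1}s$ is reduced, so $\rouq_{u^{-1}}^{-1}=\rouq_s^{-1}\otimes\rouq_{(su)^{-1}}^{-1}$ and the original product $\rouq_s\otimes\rouq_{u^{-1}}^{-1}\simeq\rouq_{(su)^{-1}}^{-1}$ collapses directly via $\rouq_s\otimes\rouq_s^{-1}\simeq\one$. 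The main obstacle, as I anticipate it, is not the construction itself but the bookkeeping: one must check that the indices $su$ appearing in these cases are strictly less than $w$ in the Bruhat order. This follows from the standard lifting property applied to $u<w'$ with $sw'=w>w'$, which gives $su\le w$, combined with the length estimate $\ell(su)\le\ell(u)+1\le\ell(w')<\ell(w)$ to upgrade this to $su<w$.
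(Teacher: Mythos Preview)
Your proof is correct and follows the same approach the paper has in mind: the paper presents the proposition as an ``immediate corollary'' of \eqref{eq:general skein}, defining $\psi_w$ by combining the maps $\psi_s$ along a reduced expression, and your argument is a careful inductive unpacking of precisely this, with the octahedral axiom and the Bruhat-order lifting property supplying the bookkeeping the paper omits.
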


In \cite{LW} Libedinsky and Williamson proved a much stronger statement (conjectured by Rouquier in \cite{Rouquier2}, p. 215 before Remark 4.12).

\begin{theorem}[\cite{LW}] \label{th:LW}
\label{th:standard costandard}
If $w\neq v$ then $\Hom(\rouq_v,\rouq^{-1}_{w^{-1}})=0$.
If $w=v$ then $\Hom(\rouq_w,\rouq^{-1}_{w^{-1}})=R$ is generated by the map $\psi_w$.
\end{theorem}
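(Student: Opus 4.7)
The first step is to use rigidity of $\SBim_W$ to eliminate the negative complex on the right. Since $\rouq_s^{\vee} \simeq \rouq_s^{-1}$ for each simple reflection $s$ and duality reverses tensor products, one has $\rouq_{w^{-1}}^{-1} \simeq \rouq_w^{\vee}$ for every $w \in W$. Combining this with the adjunction $\Homc(A, B^{\vee}) \simeq \Homc(A \otimes B, \one)$, the theorem reduces to the statement
\[
\Homc(\rouq_v \otimes \rouq_w, \one) \simeq \begin{cases} R & v = w, \\ 0 & v \neq w. \end{cases}
\]
Thus the question becomes: when does $\one$ appear as a summand (in the homotopy category) of the product $\rouq_v \otimes \rouq_w$, and with what multiplicity?

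I would then induct on $\ell(v) + \ell(w)$. The base case $v = w = e$ is trivial: $\Homc(R,R) = R$. For the inductive step, choose a simple reflection $s$ that is a right descent of $w$, so $\rouq_w \simeq \rouq_{ws} \otimes \rouq_s$. Adjunction then yields
\[
\Homc(\rouq_v \otimes \rouq_w, \one) \simeq \Homc(\rouq_v \otimes \rouq_{ws}, \rouq_s^{-1}),
\]
and applying $\Homc(\rouq_v \otimes \rouq_{ws}, -)$ to the skein triangle \eqref{eq:general skein} produces a distinguished triangle that expresses the right-hand side in terms of $\Homc(\rouq_v \otimes \rouq_{ws}, \rouq_s)$ and $\Homc(\rouq_v \otimes \rouq_{ws}, [R(1) \to R(-1)])$. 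A second round of adjunction rewrites each of these as morphism spaces between Rouquier complexes of shorter total length, to which the induction hypothesis applies. For the diagonal case $v=w$, the generator is constructed explicitly by tensoring the elementary maps $\psi_{s_i}$ along a reduced expression for $w$ to produce the splitting map $\psi_w$; its non-triviality can be verified by passing to $K_0$ and invoking the orthogonality of the standard and costandard bases of $\mathbb{H}_W$.

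The main obstacle I anticipate is coordinating the vanishing case $v \neq w$ and the non-vanishing case $v = w$ through the induction: a single reduction step can toggle the ``diagonal'' condition depending on which descents of $v$ and $w$ are chosen, and one must rule out accidental contributions from the cone terms. The cleanest organization --- which is the one Libedinsky and Williamson actually carry out in \cite{LW} --- is structural rather than case-by-case: they show that the image of $\rouq_w$ in the Verdier quotient $\US_{\le w}/\US_{<w}$ is an exceptional object with endomorphism algebra $R$, so that $\{\rouq_w\}_{w \in W}$ and $\{\rouq_{w^{-1}}^{-1}\}_{w \in W}$ form mutually dual full exceptional collections in $\KC^b(\SBim_W)$, lifting the duality of the standard and costandard bases of $\mathbb{H}_W$. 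Once this exceptional-collection picture is established, the orthogonality claimed in the theorem is automatic, and matches the Hecke-algebraic pairing by comparison of graded Euler characteristics.
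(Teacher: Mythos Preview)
Your opening reduction is the same as the paper's, modulo an indexing slip: since $\rouq_w^{\vee}\simeq \rouq_w^{-1}$ (not $\rouq_{w^{-1}}^{-1}$), duality actually gives
\[
\Homc(\rouq_v,\rouq_{w^{-1}}^{-1})\;\cong\;\Homc(\rouq_v\otimes \rouq_{w^{-1}},\,\one),
\]
so the diagonal condition after the reduction is $v=(w^{-1})^{-1}$, i.e.\ the two positive factors are inverse to one another, not equal. This is harmless but worth fixing.

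The substantive gap is in your inductive step. After peeling a right descent $s$ off $w$ and applying the skein triangle, one of the two pieces is $\Homc(\rouq_v\otimes \rouq_{ws},\rouq_s)$. You claim ``a second round of adjunction'' reduces this to shorter Rouquier complexes, but it does not: adjunction gives $\Homc(\rouq_v\otimes \rouq_{ws}\otimes \rouq_s^{-1},\one)$, and since $s$ is \emph{not} a right descent of $ws$, the product $\rouq_{ws}\rouq_s^{-1}$ is not a positive Rouquier complex. Moving $\rouq_s^{-1}$ to the left instead requires $s$ to be a left descent of $v$, which you have not arranged. So the induction on $\ell(v)+\ell(w)$ stalls here.

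The paper's fix is a case split with a different induction parameter. One peels $s$ off the \emph{shorter} side (say $v=v's$) and inducts on $\min(\ell(v),\ell(w))$. If $s$ is \emph{not} a right descent of $w$, then $\rouq_s$ merges directly into $\rouq_{w^{-1}}$ to give $\rouq_{(ws)^{-1}}$, and one is looking at $\Homc(\rouq_{v'}\rouq_{(ws)^{-1}},\one)$ with strictly smaller minimum length and $v'\neq ws$ forced; this vanishes by induction. If $s$ \emph{is} a right descent of $w=w's$, one uses the skein map $\psi_s$ on the $v$--side: the cone of $\rouq_{v'}\psi_s$ contributes $\Homc(\rouq_{v'},\rouq_{w^{-1}}^{-1})$, which vanishes by induction since $\ell(v')<\ell(v)\le \ell(w)$, reducing the question to the pair $(v',w')$. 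The key point is that the paper isolates the special case $\Homc(\rouq_w,\one)=0$ for $w\neq 1$ (Corollary~\ref{cor:LW}) as the entire input---easy in type $A$ via the Markov move of Corollary~\ref{cor:negative stabilization}---and then runs this induction.

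Finally, your closing appeal to the exceptional--collection structure is circular: in the paper, that structure (Corollary~\ref{cor:adding w}) is \emph{deduced from} Theorem~\ref{th:LW}, not used to prove it.
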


\begin{corollary} \label{cor:LW}
\label{cor:Hom vanishing in all types}
We have $\Hom(\rouq_w,R)=0$ for $w\neq 1$. 
\end{corollary}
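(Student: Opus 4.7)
The plan is to deduce the corollary directly from Theorem \ref{th:LW}. The key observation is that the trivial Rouquier complex is just the monoidal unit, $\rouq_1 = R$, which coincides with its own inverse $\rouq_1^{-1} = R$. This identifies the target $R$ in the statement with a negative Rouquier complex, so $\Hom(\rouq_w, R) = \Hom(\rouq_w, \rouq_1^{-1})$.

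I would then apply Theorem \ref{th:LW} with (the theorem's) $v$ taken to be our $w$ and (the theorem's) $w$ taken to be the identity $1 \in W$, so that the target $\rouq_{w^{-1}}^{-1}$ appearing in the theorem becomes $\rouq_1^{-1} = R$. Under this specialization, the hypothesis ``$w \neq v$'' of the theorem becomes precisely our hypothesis $w \neq 1$, and the theorem yields $\Hom(\rouq_w, R) = 0$.

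There is essentially no obstacle here: the corollary is a direct specialization of the Libedinsky-Williamson orthogonality result to the case where the target is the monoidal unit. All of the substantive content — namely, the orthogonality of the standard basis $\{\rouq_w\}$ and the costandard basis $\{\rouq_{w^{-1}}^{-1}\}$ of $\KC^b(\SBim_W)$ — is packaged into Theorem \ref{th:LW}, whose proof is carried out in \cite{LW}.
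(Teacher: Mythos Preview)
Your proof is correct and is exactly the intended argument: the paper states this as an immediate corollary of Theorem \ref{th:LW}, obtained by specializing the target to $\rouq_{1^{-1}}^{-1}=R$. The paper additionally remarks that in type $A$ the statement also follows easily from Corollary \ref{cor:negative stabilization}, but this is a side comment rather than a different proof.
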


In type A this corollary is an easy consequence of Corollary \ref{cor:negative stabilization}. In fact, Theorem \ref{th:LW} also can be deduced:
\begin{prop}
	Theorem \ref{th:LW} follows from Corollary \ref{cor:LW}.
\end{prop}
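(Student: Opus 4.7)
I will proceed by induction on $\ell(w)$, using the triangle
$$
\rouq_w \xrightarrow{\psi_w} \rouq_{w^{-1}}^{-1} \longrightarrow C_w \longrightarrow \rouq_w[1]
$$
from Proposition~\ref{prop:positive to negative}, in which $C_w$ is a filtered extension of $\rouq_{u^{-1}}^{-1}$'s for $u<w$ in Bruhat order. The base case $w=1$ is immediate: here $\rouq_{w^{-1}}^{-1}=R$, and the required identity $\Hom(\rouq_v,R)=\delta_{v,1}\,R$ is precisely Corollary~\ref{cor:LW} for $v\neq 1$, together with the tautology $\End(R)=R$ for $v=1$.

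For the inductive step, fix $w$ with $\ell(w)\geq 1$ and assume the theorem for every $u$ with $\ell(u)<\ell(w)$. Applying $\Hom(\rouq_v,-)$ to the triangle above yields a long exact sequence linking $\Hom(\rouq_v,\rouq_w)$, $\Hom(\rouq_v,\rouq_{w^{-1}}^{-1})$, and $\Hom(\rouq_v,C_w)$. The inductive hypothesis applied to each $u<w$ gives $\Hom(\rouq_v,\rouq_{u^{-1}}^{-1})=\delta_{v,u}R$, so $\Hom(\rouq_v,C_w)$ vanishes in all cohomological degrees whenever $v\not<w$. (To make the induction close along all cohomological shifts I tacitly strengthen the statement to the derived hom complex; Corollary~\ref{cor:LW} extends to this setting because $\Hom(\rouq_w,R)=0$ for $w\neq 1$ propagates through the triangulated hull $\langle \rouq_u:u\neq 1\rangle$.)

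In the diagonal case $v=w$, both $\Hom(\rouq_w,C_w)$ and its cohomological shift vanish, so the long exact sequence collapses into an isomorphism $(\psi_w)_*:\End(\rouq_w)\xrightarrow{\sim}\Hom(\rouq_w,\rouq_{w^{-1}}^{-1})$. Invertibility of $\rouq_w$ (with inverse $\rouq_w\inv$) gives $\End(\rouq_w)\cong\End(\one)=R$, generated by the identity morphism, whose image under $(\psi_w)_*$ is $\psi_w$ itself, as required. For $v\not\leq w$ (the second ``easy'' subcase of $v\neq w$), the same collapse gives $\Hom(\rouq_v,\rouq_w)\cong\Hom(\rouq_v,\rouq_{w^{-1}}^{-1})$, and one disposes of $\Hom(\rouq_v,\rouq_w)$ by a parallel induction on $\ell(v)$ using the triangle for $v$: rigidity rewrites the needed Hom as $\Hom(X,R)$ for some $X\in\langle\rouq_u:u\neq 1\rangle$, where it vanishes by Corollary~\ref{cor:LW}.

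The delicate case is $v<w$. Here the slot $u=v$ in the filtration of $C_w$ contributes a copy of $R$ to $\Hom(\rouq_v,C_w)$, and one must show that the connecting map $\Hom(\rouq_v,C_w)\to\Hom(\rouq_v,\rouq_w[1])$ identifies this copy with a matching copy of $R$ upstairs, so that the middle term $\Hom(\rouq_v,\rouq_{w^{-1}}^{-1})$ of the LES becomes zero. Equivalently, one must argue that the extension of $\rouq_w$ by $C_w$ producing $\rouq_{w^{-1}}^{-1}$ is sufficiently non-split --- every non-zero class in $\Hom(\rouq_v,C_w)$ lifts non-trivially. This is the main obstacle in the argument. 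The way I would handle it is by a rank count: all the Hom spaces appearing in the LES are free graded $R$-modules (by the complex analogue of Lemma~\ref{lemma:free homs}), and their graded ranks compute the decategorified pairing $\ip{H_v,H_{w^{-1}}^{-1}}$ in $\HM_W$. The decategorified orthogonality $\ip{H_v,H_{w^{-1}}^{-1}}=\delta_{v,w}$ forces the alternating sum of ranks in the LES to match on the nose, and combined with the already-established vanishings this pins down $\Hom(\rouq_v,\rouq_{w^{-1}}^{-1})=0$ for $v<w$, completing the induction.
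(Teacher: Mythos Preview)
Your approach has two genuine gaps, both stemming from the asymmetry in your induction on $\ell(w)$ alone.

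\textbf{Gap 1 (the case $v\not\leq w$).} You need $\Hom(\rouq_v,\rouq_w)=0$ and propose to rewrite it via rigidity as $\Hom(X,R)$ with $X=\rouq_v\rouq_w^{-1}\in\langle\rouq_u:u\neq 1\rangle$. But this membership is precisely what you are trying to show: by the semi-orthogonal decomposition, $X\in\langle\rouq_u:u\neq 1\rangle$ holds iff $\HH_0(X)\simeq 0$ iff $\Hom(X,R)\simeq 0$. So the argument is circular. Nor can you flip to $\Hom(R,-)$: Corollary~\ref{cor:LW} is genuinely one-sided, since already $\Hom(R,\rouq_s)$ has nonzero cohomology $R/(\alpha_s)$. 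The ``parallel induction on $\ell(v)$'' does not escape this, because applying $\Hom(-,\rouq_w)$ to the triangle for $v$ produces terms like $\Hom(\rouq_{v^{-1}}^{-1},\rouq_w)$, which are of the wrong shape for Corollary~\ref{cor:LW}.

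\textbf{Gap 2 (the case $v<w$).} Your rank count only controls the Euler characteristic $\chi\big(\Homc(\rouq_v,\rouq_{w^{-1}}^{-1})\big)=\ip{H_v,H_{w^{-1}}^{-1}}=0$, which does not force acyclicity: cohomology could live in several degrees and cancel. Moreover, Lemma~\ref{lemma:free homs} gives freeness of the chain groups, not of the cohomology, so the bookkeeping cannot close.

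The paper's proof sidesteps both problems with one move: instead of using rigidity to write $\Hom(\rouq_v,\rouq_{w^{-1}}^{-1})\cong\Hom(\rouq_v\rouq_w^{-1},R)$ (positive times negative), it tensors both arguments by the invertible $\rouq_{w^{-1}}$ to get $\Hom(\rouq_v\rouq_{w^{-1}},R)$ (a product of \emph{two positive} Rouquier complexes). Now Corollary~\ref{cor:LW} is directly relevant, and one inducts on $\min(\ell(v),\ell(w))$: peel $s$ off the end of $v=v's$; if $\ell(ws)>\ell(w)$ the $\rouq_s$ slides onto $\rouq_{w^{-1}}$ to give $\Hom(\rouq_{v'},\rouq_{(ws)^{-1}}^{-1})$; if $\ell(ws)<\ell(w)$ the skein triangle for $\psi_s$ reduces to the pair $(v',w')$. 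No control of $\Hom(\rouq_v,\rouq_w)$ is ever required.
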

\begin{proof}
	Assume that $\Hom(F_w,R)=0$ for $w\neq 1$. Note that 
	\[
	\Hom(\rouq_v,\rouq^{-1}_{w^{-1}}) \cong \Hom(\rouq_v \rouq_{w^{-1}}, R).
	\]
	 We induct on the number $\min(l(v), l(w))$. The base case follows from the assumption. Without loss of generality, we may assume $l(v)\leq l(w)$. Let $v=v's$ for a simple reflection $s$ and $l(v')=l(v)-1$. If $l(ws)>l(w)$ then $w\neq v$ and $ws\neq v'$, so we have
	\[
	\Hom(\rouq_v \rouq_{w^{-1}}, R) = \Hom(\rouq_{v'} \rouq_{s w^{-1}}, R) = \Hom(\rouq_{v'} \rouq_{(ws)^{-1}}, R) \cong 0,
	\]
	where the last equality follows from the induction hypothesis. So we assume $l(ws)<l(w)$. Let $w=w' s$. 
	The map $\psi_s:F_s\to F_s^{-1}$ induces a map
	\[
	\Hom(\rouq_{v'}, \rouq_{{w'}^{-1}}^{-1}) = \Hom(\rouq_{v'} \rouq_s^{-1}, \rouq_{{w'}^{-1}}^{-1} \rouq_s^{-1}) \to \Hom(\rouq_{v'} \rouq_s, \rouq_{{w'}^{-1}}^{-1} \rouq_s^{-1}) = \Hom(\rouq_v, \rouq_{w^{-1}}^{-1}),
	\]
	whose cone is filtered by $\Hom(\rouq_{v'}, \rouq_{w^{-1}}^{-1})$, which vanishes by the induction 
	hypothesis since $l(v')<l(v)\leq l(w)$. So we are reduced to the statement for the pair $v', w'$.
\end{proof}

We use Theorem \ref{th:standard costandard} to deduce a very important fact about Rouquier complexes which does not appear to be explicitly stated in the literature.

\begin{theorem}
\label{th:triangular hom}
We have $\Hom(\rouq_w,\rouq_v)=0$ unless $w\leq v$ in Bruhat order.
\end{theorem}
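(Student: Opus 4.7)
The plan is to reduce the theorem to Theorem \ref{th:standard costandard} (Libedinsky--Williamson) by using Proposition \ref{prop:positive to negative} to interpolate between $F_v$ and the negative Rouquier complexes $F_{u^{-1}}^{-1}$ for $u\leq v$.

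Fix $w$ with $w \not\leq v$. Proposition \ref{prop:positive to negative} provides a distinguished triangle
\[
F_v \xrightarrow{\psi_v} F_{v^{-1}}^{-1} \to C_v \to F_v[1],
\]
where $C_v$ is iteratively built, through distinguished triangles (possibly with nontrivial cohomological shifts), from the complexes $F_{u^{-1}}^{-1}$ for $u<v$. Applying $\Hom(F_w,-)$ yields a long exact sequence whose relevant portion is
\[
\Hom(F_w, C_v[-1]) \to \Hom(F_w, F_v) \to \Hom(F_w, F_{v^{-1}}^{-1}).
\]
The rightmost term vanishes by Theorem \ref{th:standard costandard}, since $w\neq v$. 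For the leftmost term, every $u$ appearing in the filtration of $C_v$ satisfies $u<v$, hence $w\neq u$ (as $w\not\leq v$); Theorem \ref{th:standard costandard} then gives $\Hom(F_w, F_{u^{-1}}^{-1})=0$, and a d\'evissage along the filtration forces $\Hom(F_w, C_v[-1])=0$. The long exact sequence then yields $\Hom(F_w, F_v)=0$, as desired.

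The main obstacle is that the d\'evissage actually requires the stronger vanishing $\Hom(F_w, F_{u^{-1}}^{-1}[k])=0$ at every cohomological shift $k$ arising in the filtration of $C_v$, rather than only at $k=0$. This strengthened orthogonality, expressing that $\{F_v\}_{v\in W}$ and $\{F_{w^{-1}}^{-1}\}_{w\in W}$ form a dual exceptional pair in $\KC^b(\SBim_W)$, is part of the Libedinsky--Williamson theorem; alternatively, it can be obtained by rerunning the inductive cone argument given just before Theorem \ref{th:triangular hom} -- which deduces Theorem \ref{th:standard costandard} from Corollary \ref{cor:LW} -- at the level of the full graded $\Hom$ complex rather than only its degree-zero cohomology. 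With this strengthened orthogonality in place, the d\'evissage above goes through and the proof is complete.
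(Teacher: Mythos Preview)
Your proof is correct and follows essentially the same route as the paper: use Proposition \ref{prop:positive to negative} to filter $F_v$ by $F_{u^{-1}}^{-1}$ with $u\leq v$, then invoke Theorem \ref{th:standard costandard} to conclude that $\Hom(F_w,F_v)$ can be nonzero only if $w=u$ for some $u\leq v$. You are right to flag the shift issue in the d\'evissage; the paper's $\Hom$ in this appendix is to be read as the full graded hom (all cohomological degrees), and the Libedinsky--Williamson orthogonality indeed holds at that level---your proposed fix of rerunning the inductive argument with $\Homc$ in place of $H^0$ is exactly how one makes this rigorous.
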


\begin{proof}
By Proposition \ref{prop:positive to negative} $\rouq_v$ is homotopy equivalent to a complex filtered by $\rouq_{u^{-1}}^{-1}$ 
with $u\le v$. Therefore $\Hom(\rouq_w,\rouq_v)=0$ unless $\Hom(\rouq_w,\rouq_{u^{-1}}^{-1})\neq 0$ for some $u\le v$.
But by Theorem \ref{th:standard costandard} this is possible only if $u=w$, and hence $w\le v$. 
\end{proof}

\begin{corollary}
\label{cor:adding w}
For all $w$ we have semiorthogonal decompositions $\US_{\le w}=\langle \US_{<w}, \rouq_w\rangle$ and 
$\US_{\le w}=\langle \rouq_{w^{-1}}^{-1}, \US_{<w}\rangle$.
\end{corollary}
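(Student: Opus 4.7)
The plan is to proceed by induction on $w$ in Bruhat order, handling the two semi-orthogonal decompositions in tandem. The base case $w = 1$ is trivial, since $\US_{\leq 1} = \langle \rouq_1 \rangle = \langle \mathbf{1} \rangle$, $\US_{<1} = 0$, and $\rouq_{1^{-1}}^{-1} = \mathbf{1}$. For the inductive step, fix $w$ and assume both decompositions for all $v < w$.

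For the first decomposition $\US_{\leq w} = \langle \US_{<w}, \rouq_w \rangle$, I would verify the two requirements of Definition~\ref{def:semiorthog}. Generation is immediate from the definition of $\US_{\leq w}$ as the triangulated hull of $\US_{<w} \cup \{\rouq_w\}$; the standard ``filter by generators and use the octahedral axiom'' argument produces the required decomposition triangles once semi-orthogonality is in hand. For semi-orthogonality, I need $\Hom(\rouq_w, Y[k]) = 0$ for every $Y \in \US_{<w}$ and every $k \in \Z$. On the generating objects $Y = \rouq_v$ with $v < w$ this is exactly Theorem~\ref{th:triangular hom}, since $v < w$ forces $w \not\leq v$. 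Then the subcategory of $Y$'s for which this vanishing holds is triangulated and closed under shifts, so it contains all of $\US_{<w}$.

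For the second decomposition $\US_{\leq w} = \langle \rouq_{w^{-1}}^{-1}, \US_{<w} \rangle$, the first task is to verify that $\rouq_{w^{-1}}^{-1}$ actually lies in $\US_{\leq w}$. Proposition~\ref{prop:positive to negative} supplies a chain map $\psi_w : \rouq_w \to \rouq_{w^{-1}}^{-1}$ whose cone $C$ is filtered by $\rouq_{u^{-1}}^{-1}$ for $u < w$. By the inductive hypothesis applied to each such $u$, the second decomposition of $\US_{\leq u}$ places $\rouq_{u^{-1}}^{-1} \in \US_{\leq u} \subseteq \US_{<w}$, so $C \in \US_{<w}$ and therefore $\rouq_{w^{-1}}^{-1} \in \US_{\leq w}$. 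Semi-orthogonality now reads $\Hom(Y, \rouq_{w^{-1}}^{-1}) = 0$ for $Y \in \US_{<w}$, which on the generators $\rouq_v$ with $v < w$ (in particular $v \neq w$) is precisely Theorem~\ref{th:standard costandard}, extended by dévissage as before. Finally, to see that $\US_{\leq w}$ is generated by $\rouq_{w^{-1}}^{-1}$ together with $\US_{<w}$, it suffices to place $\rouq_w$ into the triangulated hull of these objects; this follows by rotating the distinguished triangle
\[
\rouq_w \xrightarrow{\psi_w} \rouq_{w^{-1}}^{-1} \to C \to \rouq_w[1]
\]
into $C[-1] \to \rouq_w \to \rouq_{w^{-1}}^{-1}$ and noting $C \in \US_{<w}$.

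The argument is largely formal given the deep input theorems~\ref{th:standard costandard} and \ref{th:triangular hom} together with Proposition~\ref{prop:positive to negative}. The only real point requiring care is the inductive bookkeeping that places each $\rouq_{u^{-1}}^{-1}$ for $u < w$ inside $\US_{<w}$, which is why the two decompositions must be proved simultaneously: the ``negative'' half of the statement for smaller elements is exactly what feeds the generation step for the ``negative'' decomposition at $w$.
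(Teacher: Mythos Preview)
Your proof is correct and follows the same route as the paper. The paper's proof is terser: it simply asserts that $\Cone(\psi_w)\in\US_{<w}$ by Proposition~\ref{prop:positive to negative}, then invokes Theorems~\ref{th:triangular hom} and~\ref{th:standard costandard} for semi-orthogonality on generators, without spelling out the induction; your version makes explicit the inductive bookkeeping needed to place each $\rouq_{u^{-1}}^{-1}$ with $u<w$ inside $\US_{<w}$, which the paper leaves to the reader.
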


\begin{proof}
By Proposition \ref{prop:positive to negative} the category $\US_{\le w}$ is generated by $\US_{<w}$ and $\rouq_w$, or, equivalently, 
by $\US_{<w}$ and $\rouq_{w^{-1}}^{-1}$ (since $\Cone[\rouq_w\to \rouq_{w^{-1}}^{-1}]\in \US_{<w}$). Now for all $u<w$ we have
$\Hom(\rouq_w,\rouq_u)=0$ by Theorem \ref{th:triangular hom} and $\Hom(\rouq_{u},\rouq_{w^{-1}}^{-1})=0$ by Theorem \ref{th:standard costandard}.
\end{proof}

\begin{corollary}
\label{cor: left right adjoints Bruhat}
If $W$ is a finite Coxeter group then for all $w\in W$ the inclusion $\US_{\leq w}\hookrightarrow \KC^b(\SBim_W)$
has both left and right adjoints.
\end{corollary}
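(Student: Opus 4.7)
The plan is to exhibit $\US_{\leq w}$ as one side of two distinct semi-orthogonal decompositions of $\KC^b(\SBim_W)$, from which both adjoints follow at once by Corollary \ref{cor:semiortho from idempts} together with Lemma \ref{lemma:idempotents yield adjunctions}. Set
\[
\mathcal{V}^- := \langle \rouq_v \mid v\in W,\ v\not\leq w\rangle,\qquad \mathcal{V}^+ := \langle \rouq_{v^{-1}}^{-1} \mid v\in W,\ v\not\leq w\rangle
\]
as full triangulated subcategories of $\KC^b(\SBim_W)$. I will show that $(\US_{\leq w},\mathcal{V}^-)$ is a semi-orthogonal decomposition, which yields the left adjoint with kernel $\mathcal{V}^-={}^\perp\US_{\leq w}$, and that $(\mathcal{V}^+,\US_{\leq w})$ is a semi-orthogonal decomposition, which yields the right adjoint with kernel $\mathcal{V}^+=\US_{\leq w}^\perp$.

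The Hom-vanishing required for semi-orthogonality can be checked on generators. By Corollary \ref{cor:adding w}, $\US_{\leq w}$ is generated as a triangulated category by both the positive family $\{\rouq_u\mid u\leq w\}$ and the negative family $\{\rouq_{u^{-1}}^{-1}\mid u\leq w\}$. For any $v\not\leq w$ and $u\leq w$, Theorem \ref{th:triangular hom} gives $\Hom(\rouq_v,\rouq_u)=0$ because $v\leq u\leq w$ would contradict $v\not\leq w$, and Theorem \ref{th:standard costandard} gives both $\Hom(\rouq_v,\rouq_{u^{-1}}^{-1})=0$ and $\Hom(\rouq_u,\rouq_{v^{-1}}^{-1})=0$ since $v\neq u$. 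These identities together produce $\Hom(\mathcal{V}^-,\US_{\leq w})=0$ and $\Hom(\US_{\leq w},\mathcal{V}^+)=0$.

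For generation, I first invoke the general-type analog of Proposition \ref{prop:rouquier generation}: for $W$ finite, the Rouquier complexes $\{\rouq_v\}_{v\in W}$ span $\KC^b(\SBim_W)$ as a triangulated category, which follows by iterated application of Corollary \ref{cor:adding w} up to the longest element $w_0$. The minus-side generation $\langle\US_{\leq w}\cup\mathcal{V}^-\rangle=\KC^b(\SBim_W)$ is then immediate from the dichotomy on whether $v\leq w$. For the plus side, Proposition \ref{prop:positive to negative} exhibits each $\rouq_v$ with $v\not\leq w$ as an iterated extension of terms $\rouq_{u^{-1}}^{-1}$ with $u\leq v$; each such term lies in $\US_{\leq w}$ (when $u\leq w$) or in $\mathcal{V}^+$ (when $u\not\leq w$), and a Bruhat-length induction gives $\rouq_v\in\langle\US_{\leq w}\cup\mathcal{V}^+\rangle$. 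Given Hom-vanishing and generation, each pair becomes a genuine semi-orthogonal decomposition by the standard argument that the subcategory of objects admitting the required decomposition triangle is triangulated (a three-by-three diagram chase combines two decomposition triangles of $X$ and $Y$ into one for the cone of any map $X\to Y$, using Hom-vanishing to factor the map along the two sides) and trivially contains both generating pieces, hence exhausts $\KC^b(\SBim_W)$.

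The main obstacle I anticipate is the plus-side generation: unlike $\mathcal{V}^-$, whose definitional generators are literally the Rouquier complexes spanning the complement of $\US_{\leq w}$ in $\{\rouq_v\}_{v\in W}$, the generators of $\mathcal{V}^+$ are the negative Rouquier complexes $\rouq_{v^{-1}}^{-1}$, so the conversion between positive and negative via Proposition \ref{prop:positive to negative} requires careful Bruhat-order bookkeeping to ensure all intermediate terms land in $\US_{\leq w}\cup\mathcal{V}^+$. Finiteness of $W$ is essential both here (to make the inductive descent terminate) and in the spanning statement $\KC^b(\SBim_W)=\langle\rouq_v\mid v\in W\rangle$.
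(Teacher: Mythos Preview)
Your argument is correct and uses the same ingredients as the paper---namely the Hom-vanishing of Theorems \ref{th:standard costandard} and \ref{th:triangular hom}---but organizes them differently. The paper chooses a total order $\prec$ on $W$ refining Bruhat order, and then argues that each one-step inclusion along the chain $w=w^{(1)}\prec\cdots\prec w^{(k)}=w_0$ has both adjoints (by the single-generator semi-orthogonal decompositions of Corollary \ref{cor:adding w}), so the composite inclusion $\US_{\leq w}\hookrightarrow\KC^b(\SBim_W)$ does too. You instead produce the two global semi-orthogonal decompositions $(\US_{\leq w},\mathcal{V}^-)$ and $(\mathcal{V}^+,\US_{\leq w})$ in one shot. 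Your route has the advantage of making the kernels $\mathcal{V}^\pm$ explicit, and avoids the mild bookkeeping of verifying that the intermediate categories in the paper's chain are really nested; the paper's route is more modular and simply iterates Corollary \ref{cor:adding w}.

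One small remark: your citation of Corollary \ref{cor:semiortho from idempts} goes in the wrong direction (it produces semi-orthogonal decompositions \emph{from} idempotents). What you actually need is that a semi-orthogonal decomposition yields adjoints to the inclusions of its two pieces; this is standard and is alluded to in the paper just after Theorem \ref{thm:semiorthogonality}, though not proved there. Your sketch of the 3-by-3 argument for generation is the right way to fill this in.
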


\begin{proof}
Fix an arbitrary total order $\prec$ on W refining the Bruhat order, let $w_0$ be the longest element in $W$. Then for all $w$ we have a chain
$$
w=w^{(1)}\prec w^{(2)}\prec \ldots \prec w^{(k)}=w_0.
$$
Similarly to Corollary \ref{cor:adding w}, the inclusions $\US_{\leq w^{(i)}}\hookrightarrow \US_{\leq w^{(i+1)}}$ have both left and right adjoints, and by combining these we get adjoints to the inclusion
$$
\US_{\leq w^{(i)}}\hookrightarrow \US_{\leq w_0}=\KC^b(\SBim_W).
$$
\end{proof}

If $W'$ is a parabolic subgroup of $W$, we can consider the category of Soergel bimodules $\SBim_{W'}$ associated to the same realization $\frakh$.

\begin{corollary}
Let $W$ be a finite Coxeter group and  $W'$ a parabolic subgroup. Then the inclusion $\KC^b(\SBim_{W'})\to \KC^b(\SBim_W)$
has both left and right adjoints.
\end{corollary}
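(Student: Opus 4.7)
The plan is to reduce to Corollary \ref{cor: left right adjoints Bruhat} by identifying $\KC^b(\SBim_{W'})$ with the subcategory $\US_{\leq w'_0}\subset \KC^b(\SBim_W)$, where $w'_0$ denotes the longest element of the finite parabolic subgroup $W'$.

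First, the inclusion $\SBim_{W'}\hookrightarrow \SBim_W$ is fully faithful: both are full subcategories of graded $R$-bimodules with $R=\k[\frakh]$, and morphisms in either category are degree-preserving $R$-bilinear maps, so the hom spaces agree. The induced functor $\iota\colon\KC^b(\SBim_{W'})\to \KC^b(\SBim_W)$ is therefore fully faithful. Moreover, for $w\in W'$, a reduced expression of $w$ in $W'$ is also a reduced expression in $W$, so the Rouquier complex $F_w$ represents the same object whether built inside $\KC^b(\SBim_{W'})$ or inside $\KC^b(\SBim_W)$.

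Next, I would invoke the Coxeter-group analogue of Proposition \ref{prop:rouquier generation}: the collection $\{F_w\}_{w\in W'}$ generates $\KC^b(\SBim_{W'})$ as a triangulated category. This follows from Soergel's classification of indecomposables $\{B_w\}_{w\in W'}$ (valid in arbitrary type thanks to Elias--Williamson) together with the standard unitriangular filtration of $F_w$ by shifts of $B_v$ with $v\leq w$, in which $B_w$ appears exactly once. Thus the essential image of $\iota$ coincides with $\langle F_w : w\in W'\rangle$. To match this with $\US_{\leq w'_0}=\langle F_w : w\leq w'_0\text{ in }W\rangle$, I would use the parabolic fact that $\{w\in W : w\leq w'_0\}=W'$: the element $w'_0$ admits a reduced expression using only the simple reflections generating $W'$, so by the subword characterization of Bruhat order any $w\leq w'_0$ lies in $W'$, and conversely every element of $W'$ is dominated by $w'_0$ in $W'$, hence in $W$.

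Putting these together, $\iota$ realizes an equivalence $\KC^b(\SBim_{W'})\simeq \US_{\leq w'_0}$ of full triangulated subcategories of $\KC^b(\SBim_W)$, and Corollary \ref{cor: left right adjoints Bruhat} supplies both adjoints, completing the proof. The only piece that requires work beyond what is explicitly proved in the excerpt is the generation claim for $\{F_w\}_{w\in W'}$, which is the Coxeter-group analogue of Proposition \ref{prop:rouquier generation}; once this is granted, the argument reduces to routine bookkeeping about Bruhat order and the preceding corollary.
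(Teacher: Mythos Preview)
Your proposal is correct and follows exactly the paper's approach: identify $\KC^b(\SBim_{W'})$ with $\US_{\leq w'_0}$ for $w'_0$ the longest element of $W'$, then invoke Corollary~\ref{cor: left right adjoints Bruhat}. The paper states this identification in a single line without justification, whereas you have carefully spelled out the fully faithfulness, the generation by Rouquier complexes, and the Bruhat-order fact $\{w\leq w'_0\}=W'$; these elaborations are all correct and fill in precisely what the paper leaves implicit.
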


\begin{proof}
We have $\KC^b(\SBim_{W'})=\US_{\leq w}$ where $w$ is the longest element of $W'$.
\end{proof}

Note that in type A this gives an alternative construction of adjoints to inclusions of $\SBim_{n,1,\ldots,1}$ in $\SBim_{m}$.
However, it seems that the direct construction of adjoints in Section \ref{sec:semiortho} is easier to work with than the induction on Bruhat graph as in Corollary \ref{cor: left right adjoints Bruhat}.
\end{appendix}

\bibliographystyle{alpha}

\bibliography{refs}

\newcommand{\etalchar}[1]{$^{#1}$}
\begin{thebibliography}{HOM{\etalchar{+}}85}

\bibitem[BBM04]{Bez}
Aleksandr Beilinson, Roman Bezrukavnikov, and Ivan Mirkovi{\'c}.
\newblock Tilting exercises.
\newblock {\em Moscow Mathematical Journal}, 4(3):547--557, 2004.

\bibitem[BK89]{BK}
Aleksei Bondal and Mikhail Kapranov.
\newblock Representable functors, {S}erre functors, and mutations.
\newblock {\em Izvestiya Rossiiskoi Akademii Nauk. Seriya Matematicheskaya},
  53(6):1183--1205, 1989.

\bibitem[EH]{EH2}
Ben Elias and Matthew Hogancamp.
\newblock Categorical diagonalization of full twists.
\newblock arXiv: 1801.00191.

\bibitem[EH19]{EH1}
Ben Elias and Matthew Hogancamp.
\newblock On the computation of torus link homology.
\newblock {\em Compositio Mathematica}, 155(1):164--205, 2019.

\bibitem[EK10]{EK}
Ben Elias and Mikhail Khovanov.
\newblock Diagrammatics for {S}oergel categories.
\newblock {\em International Journal of Mathematics and Mathematical Sciences},
  2010, 2010.

\bibitem[EW16]{EW}
Ben Elias and Geordie Williamson.
\newblock Soergel calculus.
\newblock {\em Representation Theory of the American Mathematical Society},
  20(12):295--374, 2016.

\bibitem[GH]{GH}
Eugene Gorsky and Matthew Hogancamp.
\newblock Hilbert schemes and $y$-ification of {K}hovanov-{R}ozansky homology.
\newblock arXiv: 1712.03938.

\bibitem[GNR]{GNR}
Eugene Gorsky, Andrei Negu{\cb{t}}, and Jacob Rasmussen.
\newblock Flag {H}ilbert schemes, colored projectors and {K}hovanov-{R}ozansky
  homology.
\newblock arXiv: 1608.07308.

\bibitem[Gor12]{G}
Eugene Gorsky.
\newblock $q, t$--{C}atalan numbers and knot homology.
\newblock {\em Contemp. Math}, 566:213--232, 2012.

\bibitem[GORS14]{GORS}
Eugene Gorsky, Alexei Oblomkov, Jacob Rasmussen, and Vivek Shende.
\newblock Torus knots and the rational {DAHA}.
\newblock {\em Duke Mathematical Journal}, 163(14):2709--2794, 2014.

\bibitem[Hai01]{Haiman}
Mark Haiman.
\newblock Hilbert schemes, polygraphs and the {M}acdonald positivity
  conjecture.
\newblock {\em Journal of the American Mathematical Society}, 14(4):941--1006,
  2001.

\bibitem[Hog17]{Hog17b}
Matthew Hogancamp.
\newblock Idempotents in triangulated monoidal categories, 2017.
\newblock arXiv: 1703.01001.

\bibitem[Hog18]{Hog18-GT}
Matthew Hogancamp.
\newblock Categorified {Y}oung symmetrizers and stable homology of torus links.
\newblock {\em Geom. Topol.}, 22(5):2943--3002, 2018.

\bibitem[HOM{\etalchar{+}}85]{HOMFLY}
Jim Hoste, Adrian Ocneanu, Kenneth Millett, Peter Freyd, WB~Raymond Lickorish,
  and David Yetter.
\newblock A new polynomial invariant of knots and links.
\newblock {\em Bulletin of the American Mathematical Society}, 12(2):239--246,
  1985.

\bibitem[Jon87]{Jones}
VFR Jones.
\newblock Hecke algebra representations of braid groups and link polynomials.
\newblock {\em Annals of Mathematics}, pages 335--388, 1987.

\bibitem[K{\'a}l09]{Kalman}
Tam{\'a}s K{\'a}lm{\'a}n.
\newblock Meridian twisting of closed braids and the {Homfly} polynomial.
\newblock In {\em Mathematical Proceedings of the Cambridge Philosophical
  Society}, volume 146, pages 649--660. Cambridge University Press, 2009.

\bibitem[Kho07]{Kh07}
Mikhail Khovanov.
\newblock Triply-graded link homology and {H}ochschild homology of {S}oergel
  bimodules.
\newblock {\em International Journal of Mathematics}, 18(08):869--885, 2007.

\bibitem[KR08]{KR2}
Mikhail Khovanov and Lev Rozansky.
\newblock Matrix factorizations and link homology ii.
\newblock {\em Geometry \& Topology}, 12(3):1387--1425, 2008.

\bibitem[Kra10]{Krause10}
Henning Krause.
\newblock Localization theory for triangulated categories.
\newblock In {\em Triangulated categories}, volume 375 of {\em London Math.
  Soc. Lecture Note Ser.}, pages 161--235. Cambridge Univ. Press, Cambridge,
  2010.

\bibitem[LW14]{LW}
Nicolas Libedinsky and Geordie Williamson.
\newblock Standard objects in 2-braid groups.
\newblock {\em Proceedings of the London Mathematical Society},
  109(5):1264--1280, 2014.

\bibitem[Mel]{Mellit}
Anton Mellit.
\newblock Homology of torus knots.
\newblock arXiv: 1704.07630.

\bibitem[ML98]{Maclane}
Saunders Mac~Lane.
\newblock {\em Categories for the working mathematician}, volume~5 of {\em
  Graduate Texts in Mathematics}.
\newblock Springer-Verlag, New York, second edition, 1998.

\bibitem[MS08]{MS}
Volodymyr Mazorchuk and Catharina Stroppel.
\newblock Projective-injective modules, {S}erre functors and symmetric
  algebras.
\newblock {\em Journal f{\"u}r die reine und angewandte Mathematik (Crelles
  Journal)}, 2008(616):131--165, 2008.

\bibitem[Nak19]{Nak}
Keita Nakagane.
\newblock The action of full twist on the superpolynomial for torus knots.
\newblock {\em Topology and its Applications}, 266, 2019.

\bibitem[ORS18]{ORS}
Alexei Oblomkov, Jacob Rasmussen, and Vivek Shende.
\newblock The {H}ilbert scheme of a plane curve singularity and the {HOMFLY}
  homology of its link.
\newblock {\em Geometry \& Topology}, 22(2):645--691, 2018.

\bibitem[Rou]{Rouquier}
Raphael Rouquier.
\newblock Categorification of the braid groups.
\newblock arXiv: math/0409593.

\bibitem[Rou06]{Rouquier2}
Raphael Rouquier.
\newblock Derived equivalences and finite dimensional algebras.
\newblock {\em Proceedings of the International Congress of Mathematicians},
  2:191--221, 2006.

\bibitem[Soe07]{Soergel}
Wolfgang Soergel.
\newblock Kazhdan-{L}usztig-polynome und unzerlegbare bimoduln {\"u}ber
  polynomringen.
\newblock {\em Journal of the Institute of Mathematics of Jussieu},
  6(3):501--525, 2007.

\end{thebibliography}

\end{document}